\documentclass[final,hidelinks,onefignum,onetabnum]{siamart220329}

\usepackage{lipsum}
\usepackage{amsfonts}
\usepackage{graphicx}
\usepackage{epstopdf}
\usepackage{subcaption}
\usepackage{algorithmic}
\usepackage{fancyhdr}
\ifpdf
\DeclareGraphicsExtensions{.eps,.pdf,.png,.jpg}
\else
\DeclareGraphicsExtensions{.eps}
\fi

\usepackage{hyperref} 
\usepackage{cleveref}
\usepackage{booktabs}
\usepackage{stmaryrd}
\usepackage{bm}
\usepackage{caption}
\usepackage{multirow}
\usepackage{enumitem}

\usepackage{amsopn}

\newsiamthm{test}{Test}
\newsiamthm{assumption}{Assumption}
\newcommand{\nm}[1]{\left\|#1\right\|}

\def\jump#1{\llbracket #1 \rrbracket }

\newcommand{\ip}[2]{\left(#1,#2\right)}

\newcommand{\proj}{\widetilde{P}}
\newcommand{\ipproj}[2]{\mathbb{P}_{h}^{*}\left(#1,#2\right)}

\newcommand{\Pe}{\widetilde P}

\newsiamremark{remark}{Remark}
\newsiamremark{hypothesis}{Hypothesis}
\crefname{hypothesis}{Hypothesis}{Hypotheses}
\newsiamthm{claim}{Claim}
\newsiamremark{exmp}{Example} 

\headers{}{}

\title{Runge--Kutta-Aligned Oscillation Elimination: Restoring Superconvergence for Fully Discrete Shock-Capturing DG Schemes\thanks{This work was partially supported by Science Challenge Project (No.~TZ2025007), the Shenzhen Science and Technology Program (Nos.~JCYJ20250604144300001 and RCJC20221008092757098),	 the National Natural Science Foundation of China (Nos.~124B2022 and 12171227), and the Guangdong Basic and Applied Basic Research Foundation (2024A1515012329).}}
	
	\author{ 
		Manting Peng\thanks{Department of Mathematics, Southern University of Science and Technology, Shenzhen, Guangdong 518055, China (\email{pengmt2024@mail.sustech.edu.cn}).}
		\and Zhuoyun Li \thanks{Department of Mathematics, Southern University of Science and Technology, Shenzhen, Guangdong 518055, China (\email{lizhuoyun2022@mail.sustech.edu.cn}).  }
		\and Kailiang Wu\thanks{Corresponding author. Department of Mathematics and Shenzhen International Center for Mathematics, Southern University of Science and Technology, Shenzhen, Guangdong 518055, China (\email{wukl@sustech.edu.cn}).}} 
	
\ifpdf
\hypersetup{
	pdftitle={},
	pdfauthor={}
}
\fi

\allowdisplaybreaks

\newlist{steps}{enumerate}{1}
\setlist[steps, 1]{label =\textbf{ Step \arabic*:}}

\usepackage{multirow}

\begin{document}

	\maketitle
	\setlength{\abovedisplayskip}{4pt}   
	\setlength{\belowdisplayskip}{4pt}
	\setlength{\abovedisplayshortskip}{6pt}
	\setlength{\belowdisplayshortskip}{6pt}



	\begin{abstract}
Nonlinear stabilization is indispensable for discontinuous Galerkin (DG) discretizations of hyperbolic conservation laws, yet it typically disrupts the delicate error structure required for superconvergence analysis. Consequently, existing theory has largely been restricted to linear or semi-discrete schemes lacking oscillation control. This paper bridges this gap by proposing a Runge--Kutta (RK) aligned oscillation-eliminating (OE) DG framework that restores the superconvergence properties. By synchronizing the pseudo-time step in the OE procedure with the cumulative RK stage coefficients, we unlock a cancellation mechanism for low-order interface errors that is inaccessible to standard OEDG formulations. We rigorously prove that this fully discrete scheme achieves $(k+2)$-th order superconvergence to a tailored projection of the exact solution for linear conservation laws in both one and two dimensions, while maintaining the non-oscillatory shock-capturing capabilities of the original method. Moreover, we establish a general guiding principle for designing a class of OE-type DG schemes that exhibit such superconvergence. Key theoretical innovations include the construction of stage-aligned correction functions to compensate for nonlinear OE sources and the discovery of a two-dimensional projection operator that preserves outflow-edge averages, a property essential to close the discrete shift estimates. Numerical experiments confirm the predicted superconvergence rates and demonstrate that RK alignment preserves the parameter-free robustness of OEDG for problems with strong discontinuities.
	\end{abstract}

	\begin{keywords}
		discontinuous Galerkin methods, hyperbolic conservation laws, superconvergence, oscillation elimination, Runge-Kutta methods, shock-capturing
	\end{keywords}
	
	\begin{AMS}
		65M60, 65M12, 65M15, 35L65
	\end{AMS}

	\section{Introduction}
	Discontinuous Galerkin (DG) methods, originally introduced by Reed and Hill~\cite{reed1973triangular} and subsequently matured into the Runge--Kutta Discontinuous Galerkin (RKDG) framework by Cockburn, Shu, and collaborators~\cite{T_Cockburn_1989,T_Cockburn_1990,T_Cockburn_1998,shu2009discontinuous,O_Cockburn_2008}, have established themselves as a standard high-order methodology for solving hyperbolic conservation laws. A defining characteristic of DG schemes in the linear regime is their remarkable error structure: while the global $L^2$-error typically converges at order $k{+}1$ for degree-$k$ polynomials, specific observables---such as projections, cell averages, and boundary traces---exhibit \textit{superconvergence} at substantially higher rates. This phenomenon is far more than a theoretical curiosity; it constitutes the analytical backbone for high-order post-processing, asymptotically exact a posteriori error estimation, and efficient adaptive mesh strategies~\cite{ji2014superconvergent,E_Zhang_2004,adjerid2002posteriori,adjerid2006superconvergence,cao2014superconvergence,cao2017superconvergenceupwindbiased,xu2020superconvergence,xu2022superconvergence,A_Huang_2020,liu2020optimal,A_Yu_2020}.

However, a fundamental tension persists in the development of high-order schemes. Practical solvers for conservation laws must incorporate nonlinear stabilization mechanisms (such as limiters~\cite{shu2009discontinuous,qiu2005runge}, artificial viscosity~\cite{A_Yu_2020,A_Huang_2020}, or other methods~\cite{lin2024high,prill2009smoothed,hennemann2021provably}) to suppress spurious oscillations near discontinuities. Crucially, this nonlinearity typically disrupts the delicate error cancellation properties that superconvergence proofs rely upon. Consequently, a significant gap has widened in the literature: rigorous superconvergence analysis has been restricted almost exclusively to {\bf linear discretization} \cite{cao2014superconvergence,xu2020superconvergence,S_Cao_2015,cao2017superconvergenceupwindbiased,cheng2010superconvergence} schemes lacking oscillation control, while robust shock-capturing schemes have largely eluded sharp theoretical error characterization.
	
Recently, the Oscillation-Eliminating DG (OEDG) method~\cite{O_Peng_2024}, inspired by the oscillation-free damping technique of Lu, Liu, and Shu~\cite{lu2021oscillation}, has emerged as a promising alternative. By applying a scale- and evolution-invariant oscillation-eliminating (OE) procedure after each RK stage, OEDG ensures local conservation and robustness without problem-specific parameter tuning. While the method has demonstrated superior numerical performance across unstructured grids~\cite{ding2025robust} and hyperbolic systems~\cite{yan2024uniformly,cao2025robust,liu2025structure}, its theoretical foundation lags behind. In previous work~\cite{O_Peng_2024}, optimal convergence rates were proven via an $L^2$-stability argument, but the inherent nonlinearity of the stage-coupled OE damping term precluded a deeper analysis of the error structure. Specifically, a structural mismatch between the pseudo-time step of the OE procedure and the physical time steps of the Runge--Kutta stages destroys the interface error cancellations necessary for superconvergence, leaving the empirically observed high-order accuracy theoretically unexplained.

This paper bridges this gap by proposing a \emph{Runge--Kutta (RK) aligned} OE-type DG framework. {To the best of our knowledge, this represents the first fully discrete superconvergence theory for nonlinear shock-capturing DG schemes with oscillation control.} The central innovation is a structural synchronization: we modify the OE relaxation step so that its pseudo-time increment aligns precisely with the cumulative RK stage weights (Butcher coefficients). While deceptively simple implementation-wise, this alignment is profound analytically; it unlocks a stage-by-stage cancellation mechanism for low-order interface errors that is inaccessible in the original OEDG formulation. This allows us to construct novel \emph{stage-aligned correction functions} that systematically compensate for the nonlinear OE sources, thereby restoring the superconvergence property at the fully discrete level.

Extending this analysis to multidimensional and variable-coefficient problems introduces further technical obstacles, particularly the loss of Galerkin orthogonality in 2D and the errors introduced by ``freezing'' wind directions. To overcome these, we identify a specific 2D projection operator that preserves outflow-edge averages---a structure-preserving property essential for closing the discrete shift estimates.
	
Accordingly, the main contributions of this work are as follows:
\begin{itemize}[leftmargin=*, noitemsep]
	\item \textit{An RK-aligned OEDG framework:} We formulate a general class of OE-type schemes where the stabilization step is structurally coupled to the time integrator. This modification is non-intrusive, preserving the spatial discretization and shock-capturing robustness of the original OEDG while enabling rigorous analysis.
	
	\item \textit{Rigorous superconvergence theory:} We prove $(k{+}2)$-th order superconvergence of the fully discrete solution toward a tailored projection of the exact solution for linear hyperbolic equations. This result holds for both constant and variable coefficients in 1D and 2D. We further establish $(k{+}2)$-th order accuracy for physically relevant observables, including cell averages and outflow-edge averages.
	
	\item \textit{Novel analytical tools:} We develop an analysis framework for nonlinear schemes based on two key innovations: (1) stage-aligned correction functions that eliminate low-order nonlinear errors, and (2) a structure-preserving 2D projection that recovers approximate Galerkin orthogonality for variable-coefficient problems.
	
	\item \textit{Validation of robustness:} Numerical experiments confirm that the theoretical superconvergence rates are achieved in smooth regions, while the parameter-free shock-capturing capability of the original OEDG method is fully preserved in the presence of strong discontinuities.
\end{itemize}

This paper is organized as follows. In \Cref{sec:framework}, we present the RK-aligned OE-type DG framework and the structural assumptions on the OE coefficients. \Cref{sec:main_results} states the main superconvergence results. The analysis for constant-coefficient advection is provided in \Cref{sec:proof_const}.  Section~\ref{sec:VC} extends the analysis to linear variable-coefficient problems. Numerical experiments are reported in \Cref{sec:numerics}. Concluding remarks are given in \Cref{sec:conclusion}.

	\section{RK-aligned OE-type DG framework for hyperbolic conservation laws}\label{sec:framework}
	
	In this section, we propose an RK-aligned OE-type DG framework for hyperbolic conservation laws
	\begin{equation}\label{eq:HCL}
		\frac{\partial  \bm{u} }{\partial t}	 + \sum_{i = 1}^d \frac{\partial  \bm{f}_i(\bm{u})}{\partial x_i}  = \bm{0}, \qquad ({\bm x},t) \in \Omega \times \mathbb{R}^+,
	\end{equation}
	where $\bm{u} = (u_1,\ldots,u_N)^\top$ is the vector of conserved variables, $\mathbf{f}_i(\bm{u})$ denotes the flux in the $x_i$-direction and $\Omega$ represents the computational domain in $\mathbb{R}^d$. 
	
	\subsection{Formulation}
Let \(\mathcal{T}_h\) be a partition of the computational domain \(\Omega\). The DG finite element space associated with \(\mathcal{T}_h\) is defined as
\[
\mathbb{V}_h^k := \left\{ \bm{v} \in [L^2(\Omega)]^N : \bm{v}|_K \in [\mathbb{P}^k(K)]^N, \quad \forall K \in \mathcal{T}_h \right\}.
\]
We denote the mesh size by \( h := \max_{K\in\mathcal{T}_h}\mathrm{diam}(K) \).
For \eqref{eq:HCL}, the semi-discrete DG method seeks $\bm{u}_h(\cdot,t) \in \mathbb{V}_h^k$ such that
\begin{equation}\label{eq:semi-DG}
	\frac{\mathrm{d}}{\mathrm{d}t}\int_K\bm{u}_h \cdot \bm{v} \, \mathrm{d}\bm{x} =H(\bm{u}_h,\bm{v}),\quad H(\bm{u}_h,\bm{v}):= \int_K \bm{f}(\bm{u}_h) : \nabla \bm{v}\,\mathrm{d}\bm{x}
	- \sum_{e \subset \partial K} \int_e \hat{\bm{f}}(\bm{u}_h) : (\bm{n}_e \otimes \bm{v})\,\mathrm{d}S
\end{equation}
for all $\bm{v} \in \mathbb{V}_h^k$ and $K \in \mathcal{T}_h$. Here ``$:$'' denotes the Frobenius inner product of two matrices, ``$\otimes$'' represents the Kronecker product of two vectors, $\widehat {\bm f} ({\bm u}_h)$ denotes a suitable numerical flux on the element interface $e \in \partial K$, and ${\bm n}_e$ is the outward unit normal vector at $e$ with respect to $K$. Semi-discrete DG scheme \eqref{eq:semi-DG} can be expressed as a system of ordinary differential equations $\frac{\mathrm{d}}{\mathrm{d}t}\bm{u}_h = L_{\bm{f}}(\bm{u}_h)$. 
With an $r$th-order $s$-stage RK method, we can further discretize \eqref{eq:semi-DG} with respect to $t$ as the following fully-discrete RKDG scheme
\begin{subequations}\label{eq:RKDG}
	\begin{align}
		\bm{u}_h^{n,0} &= \bm{u}_h^n,\\
		\bm{u}_h^{n,l+1} &= \sum_{m=0}^{l}\big(c_{lm}\bm{u}_h^{n,m}+\tau d_{lm} L_{\bm{f}}(\bm{u}_h^{n,m}) \big),\quad l= 0,\dots,s-1,\label{eq:RKDG2}\\
		\bm{u}_h^{n+1} & = \bm{u}_h^{n,s},
	\end{align}
\end{subequations}
where $\bm{u}_h^n$ and $\tau>0$ denote the numerical solution at the $n$th time step and the time step size respectively. Throughout the paper, we consider the RK methods that satisfy $\sum_{m=0}^l c_{lm} = 1$, and  $\sum_{m=0}^l d_{lm} \geq0$ in \eqref{eq:RKDG2}, a class that includes, but is not limited to, the SSP RK methods \cite{gottlieb1998total}.

Due to the {nonlinear} nature, conventional {high-order} RKDG schemes often produce spurious oscillations near discontinuities. To mitigate these oscillations, \cite{O_Peng_2024} presents the OEDG method, which incorporates a novel oscillation-eliminating (OE) procedure after each RK stage of \eqref{eq:RKDG}. The resulting OEDG method not only captures sharp features effectively without spurious oscillations, but also exhibits several desirable properties such as scale-invariance and evolution-invariance. For $\bm{u}_h =(u_{h,1},\dots,u_{h,N}) \in \mathbb{V}_h^k$ and OE stepsize $\hat{\tau}>0$, the OE procedure is defined as $$\left( \mathcal{F}_{\hat{\tau}}\bm{u}_h \right) (\bm{x}) = \bm{u}_{\sigma}(\bm{x},\hat{\tau}), $$ where $\bm{u}_{\sigma}( \bm{x},\hat{t} )$ satisfies the following initial value problem: \begin{equation}\label{eq:OEprocedure}
	\begin{cases}
		\displaystyle \frac{\mathrm{d}}{\mathrm{d}\hat{t}} \int_K  \bm{u}_\sigma \cdot \bm{v} \, \mathrm{d}\bm{x} = -\sum_{p=0}^{k} \delta_K^p(\bm{u}_h) \int_K (\bm{u}_\sigma - { P^{(p-1)}} \bm{u}_\sigma) \cdot \bm{v} \, \mathrm{d}\bm{x}, \\
		\bm{u}_\sigma \big|_{\hat{t}=0} = \bm{u}_h,
	\end{cases}
\end{equation}
for all $K \in \mathcal{T}_h$ and $\bm{v} \in \mathbb{V}_h^k$. Here $\hat t$ is a pseudo-time variable, distinct from the physical time $t$. The operator $P^{(p-1)}$ is the
standard $L^2$-projection into $\mathbb V_h^{p-1}$ for $p\ge 1$, and we set $P^{(-1)}:=P^{(0)}$. The nonnegative coefficients $\{\delta_K^p(u_h)\}_{p=0}^k$ have a variety of choices. We require the OE coefficients satisfy the following general principle. 

\subsection{General guiding principle on the design of OE coefficients}\label{subsec:coe_preliminary}

\paragraph{\textbf{Condition 1} (Local Lipschitz-type continuity)}
For each \(p\), we assume that \(\delta_K^p(\cdot)\) satisfies a local Lipschitz-type continuity estimate. Specifically, if \(\|v-w\|_{L^{\infty}(\Omega)}\le Ch\), then
\begin{equation}\label{eq:lipschitz-typecontinuity}
	\sum_{K\in \mathcal{T}_h}\bigl(\delta_K^p(v)-\delta_K^p(w)\bigr)^2
	\le C\Bigl(|v|_{\mathrm{jump}}^2+|w|_{\mathrm{jump}}^2
	+ h^{-2}\,|v-w|_{\mathrm{jump}}^2 \Bigr).
\end{equation}
This condition is a stability bound expressed in terms of the jump seminorm across element interfaces, defined by
$
|\omega|_{\mathrm{jump}}^2 := \sum_{K\in\mathcal{T}_h}\sum_{e\subset\partial K}\int_{e}\jump{\omega}_e^{\,2}\,{\rm d}S,
$ 
where \(\jump{\omega}_e\) denotes the jump of \(\omega\) across the interface \(e\).
\begin{remark}
	In practice, verifying \(\|v-w\|_{L^{\infty}(\Omega)}\le Ch\) stage by stage may be inconvenient in the fully discrete analysis. As an alternative sufficient condition, we may assume the following scale-explicit estimate
	\begin{equation}\label{eq:lipschitz-typecontinuity2}
		\sum_{K\in \mathcal{T}_h}\bigl(\delta_K^p(v)-\delta_K^p(w)\bigr)^2
		\le C\,\frac{\|v-w\|_{L^{\infty}(\Omega)}^2}{h^2}\Bigl(|v|_{\mathrm{jump}}^2+|w|_{\mathrm{jump}}^2\Bigr)
		+ C\,h^{-2}\,|v-w|_{\mathrm{jump}}^2.
	\end{equation}
	Under the smallness condition \(\|v-w\|_{L^{\infty}(\Omega)}\le Ch\), \eqref{eq:lipschitz-typecontinuity2} immediately implies \eqref{eq:lipschitz-typecontinuity}. Furthermore, the $L^{\infty}$ norm can be substituted into other global norms such as the $L^p$ norm. In the fully discrete analysis in \Cref{sec:sup_analysis}, we will work with the scale-explicit estimate~\eqref{eq:lipschitz-typecontinuity2}.
\end{remark}

\paragraph{\textbf{Condition 2} (Controlled by jumps)}
For all $p$, we assume the damping coefficients $\{\delta_K^p(w)\}_{K\in\mathcal T_h}$ are nonnegative and satisfy
\begin{equation}\label{eq:jumpcontroll}
	\sum_{K\in\mathcal{T}_h}\Bigl(\delta_K^p(w)\Bigr)^2
	\le C\,h^{-2}\,|w|_{\mathrm{jump}}^2.
\end{equation}

\begin{remark}
A concrete choice of OE coefficients satisfying Conditions~1--2 is given in \cite[\S2]{O_Peng_2024}, where, for nonconstant initial value, Condition~2 is verified iteratively by showing that \(\|u_{\sigma}-|\Omega|^{-1}\int_\Omega u_{\sigma}\,dx\|\ge C\). Throughout this paper, we use only the abstract bounds stated in Conditions~1--2.
\end{remark}

Now we present the RK-aligned OE-type DG, which is a variant of the original OEDG method. For general hyperbolic system \eqref{eq:HCL}, the RK-aligned OE-type DG scheme can be written as
\begin{subequations}\label{eq:sd-OEDG}
	\begin{align}
		\bm{u}_\sigma^{n,0} &= \bm{u}_h^n,\\
		\bm{u}_h^{n,l+1} &= \sum_{m=0}^{l}\big(c_{lm}\bm{u}_\sigma^{n,m}+\tau d_{lm} L_{\bm{f}}(\bm{u}_\sigma^{n,m}) \big),
		\label{eq:sd-OEDG2}\\
		\bm{u}_\sigma^{n,l+1} & = \mathcal{F}_{\hat\tau_{l+1}}\bm{u}_h^{n,l+1}, \quad l= 0,\dots,s-1,	\label{eq:sd-OEDG3}\\
		\bm{u}_h^{n+1} & = \bm{u}_\sigma^{n,s}.
	\end{align}
\end{subequations}
At $(l+1)$th RK stage, we define the OE stepsize of the RK-aligned OE-type DG scheme as $\hat\tau_{l+1} =\sum_{m=0}^{l}d_{lm} \tau$. We also define \(\bm{u}_{\sigma}^n := \bm{u}_{\sigma}^{n,0}\) for all \(n\).

\begin{remark}[Why RK alignment matters]
	The OE relaxation~\eqref{eq:OEprocedure} acts as a nonlinear, mode-dependent damping in a pseudo-time variable.
	In a multi-stage RK update, the DG transport increment at stage $l{+}1$ is weighted by the coefficients $d_{lm}$.
	Choosing $\hat\tau_{l+1}=\tau\sum_{m=0}^{l}d_{lm}$ couples the damping strength to the \emph{same} stage weight.
	This alignment is the structural ingredient that restores the cancellation of low-order interface errors in the
	fully discrete error equation and is therefore indispensable for the superconvergence analysis in
	\Cref{sec:proof_const} and  \Cref{sec:VC}.
\end{remark}

	\section{Main results for linear advection with upwind fluxes}\label{sec:main_results}
\subsection{Unified notation and setting for theoretical analysis}

We consider the linear advection equation with periodic boundary conditions
\begin{equation}\label{eq:advec}
	u_t + \nabla\cdot ( \bm{\beta}u ) = 0,
\end{equation}
where \(\bm{\beta}\) denotes the velocity vector, which may be constant or {\it spatially dependent \(\bm{\beta}({\bm x})\)}. For our analysis, we employ the DG scheme equipped with upwind fluxes. We assume a bounded fixed wind direction and that without loss of generality, the velocity field $\bm{\beta}(\bm x)$ satisfies \( min_{1\le i\le d}\beta_i(\bm x) > 0\) for all ${\bm x}$ and \(\|\nabla \cdot \bm{\beta}\|_{L^\infty(\Omega)}\leq C\). To streamline the presentation, we introduce a unified set of notations for both one-dimensional (1D) and two-dimensional (2D) cases.

\begin{itemize}[leftmargin=*, noitemsep]
	\item \textbf{One dimension.}
	Let \(\mathcal{T}_h=\bigcup_i\{I_i=[x_{i-\frac12},x_{i+\frac12}]\}\) be a quasi-uniform partition of \(\Omega\). Quasi-uniformity implies the existence of a constant \(C> 0\) such that
	\(\min_i (x_{i+\frac12}-x_{i-\frac12}) \ge Ch\), where \(h = \max_i (x_{i+\frac12}-x_{i-\frac12})\).
	We define the local (cell-wise) inner product and norm as
	$
	\ip{\omega}{v}_i = \int_{I_i}\omega v\,\mathrm{d}x$ and 
	$
	\nm{\omega}_i = (\int_{I_i}\omega^2\,\mathrm{d}x)^{1/2}.
	$ 
	The global inner product and norm are obtained by summing over all cells:
	$
	\ip{\omega}{v} = \sum_i \ip{\omega}{v}_i$ and 
	$
	\nm{\omega} = (\sum_i \nm{\omega}_i^2)^{1/2}.
	$ 
	The DG spatial operator in \eqref{eq:semi-DG} satisfies \(H(\omega,v)=\sum_i H_i(\omega,v)\), where the local operator is given by
	\[
	H_i(\omega,v)
	= \ip{\beta\,\omega}{v_x}_i
	+ \beta\,\omega(x_{i-\frac12}^-)\,v(x_{i-\frac12}^+)
	- \beta\,\omega(x_{i+\frac12}^-)\,v(x_{i+\frac12}^-).
	\]
	
	\item \textbf{Two dimensions.}
	Let $\mathcal{T}_h=\bigcup_{i,j}\left\{
	I_{i,j}=[x_{i-\frac12},x_{i+\frac12}]\times[y_{j-\frac12},y_{j+\frac12}]
	\right\}$ be a uniform partition with constant step sizes \(h_x\) and \(h_y\) with $h = \sqrt{h_x^2+h_y^2}$.
	The local inner product and \(L^2\) norm are defined as
	$ 
	\ip{\omega}{v}_{i,j} = \int_{I_{i,j}}\omega\cdot v\,\mathrm{d}x\mathrm{d}y,
	$ and $
	\nm{\omega}_{i,j} =\ip{\omega}{\omega}_{i,j}^{1/2}.
	$ 
	Globally, we have
	$
	\ip{\omega}{v} = \sum_{i,j}\ip{\omega}{v}_{i,j}$ and 
	$ 
	\nm{\omega} = (\sum_{i,j}\nm{\omega}_{i,j}^2)^{1/2}.
	$ 
	The global 2D DG spatial operator is defined by summing the local contributions, \(H(\omega,v)=\sum_{i,j} H_{i,j}(\omega,v)\), where
	\[\resizebox{0.99\hsize}{!}{$
	\begin{aligned}
		H_{i,j}(\omega,v)
		&= \ip{\bm{\beta}\omega}{\nabla v}_{i,j}  +\int_{y_{j-\frac12}}^{y_{j+\frac12}}
		\Bigl[
		(\beta_1\omega)(x_{i-\frac12}^-,y)\,v(x_{i-\frac12}^+,y)
		-(\beta_1\omega)(x_{i+\frac12}^-,y)\,v(x_{i+\frac12}^-,y)
		\Bigr]\,\mathrm{d}y \\
		&\quad +\int_{x_{i-\frac12}}^{x_{i+\frac12}}
		\Bigl[
		(\beta_2\omega)(x,y_{j-\frac12}^-)\,v(x,y_{j-\frac12}^+)
		-(\beta_2\omega)(x,y_{j+\frac12}^-)\,v(x,y_{j+\frac12}^-)
		\Bigr]\,\mathrm{d}x.
	\end{aligned}$}
	\]
	This bilinear form \(H(\omega,v)\) represents the standard DG weak formulation of the advection operator \(-\nabla\cdot(\bm{\beta} \omega)\) with upwind numerical fluxes.
\end{itemize}

The RK-aligned OE-type DG method \eqref{eq:sd-OEDG2}--\eqref{eq:sd-OEDG3} can be reformulated as

	\begin{align}
		\ip{u_\sigma^{n,l+1}}{v}
		&= \sum_{m=0}^l\Bigl(
		c_{lm}\ip{u_\sigma^{n,m}}{v}
		+\tau d_{lm} H\!\left(u_\sigma^{n,m},v\right)
		\Bigr)
		+\tau \ip{\frac{\mathcal{F}_{\hat\tau_{l+1}}u_h^{n,l+1}-u_h^{n,l+1}}{\tau}}{v}
		\label{eq:sd-RKOEDG-advec}	.
	\end{align}


The term
\(
\bigl(\mathcal{F}_{\hat\tau_{l+1}}u_h^{n,l+1}-u_h^{n,l+1}\bigr)/\tau
\)
appears as a source contribution in \eqref{eq:sd-RKOEDG-advec}.
We refer to \eqref{eq:sd-RKOEDG-advec} as the RKDG reformulation of \eqref{eq:sd-OEDG} for \eqref{eq:advec}.

At each stage $m$, $u_h^{n,m}$ denotes the stage solution before applying the OE procedure, while $u_\sigma^{n,m}$ denotes the stage solution after the OE step.
In particular, $u_\sigma^{n,s}=u_\sigma^{n+1}$ is the fully updated solution at time $t^{n+1}$.
Throughout the paper, when the same superscript convention is used for other quantities, the index pair $(n,m)$ denotes the time level $t^n$ and the Runge--Kutta stage $m$ (with $n$ identified with $(n,0)$), and we define \(N := \lfloor T/\tau \rfloor\).

\subsection{Main results}
Our analysis relies on three main ingredients: a special projection $\Pe$, 
RK stage reference solutions $\{U^{n,m}\}$, and stage correction functions $\{Z^{n,m}\}$. 
We also use fully discrete $L^2$ stability of the underlying RKDG scheme with source terms (Lemma~\ref{assump:stability};  the variable-coefficient counterpart is proved in \Cref{sec:VC}); see, e.g., \cite{xu2020superconvergence,sun2019strong,xu2020error,xu2019l2}. We recall the stability estimate at the beginning of \Cref{sec:proof_const}.

\begin{proposition}[Optimal error estimate]\label{prop:opterror}
	Let $k\geq \frac d2$ and $r\geq \frac d2+1$. 
	Assume that the damping coefficients \(\{\delta_K^p\}\) satisfy \eqref{eq:lipschitz-typecontinuity2}--\eqref{eq:jumpcontroll}, and that the spatial mesh is quasi-uniform in one dimension and uniform Cartesian in two dimensions.
	Suppose that the exact solution $U$ of \eqref{eq:advec} is sufficiently smooth; for instance,
	$U \in C^{r+1}([0,T];H^{k+1}(\Omega))\text{ for }d=1$ and $
	U\in C^{r+1}\!\big([0,T];H^{k+2}(\Omega)\big)\text{ for }d=2.$
	If the initial error satisfies \(\|u_{\sigma}^0 - U(\cdot,0)\| \leq C h^{k+1}\), then under the CFL condition ${\frac{\tau}{h^\kappa} \leq C_{\textrm{CFL}}}$ in \Cref{assump:stability} (and in  \Cref{lem:VC:stab} for the variable-coefficient counterpart), there exist constants \(C>0\) and \(h_{\max}>0\), independent of \(h\) and \(\tau\), such that the RK-aligned OE-type RKDG solution \(u_{\sigma}^{n}\) satisfies
	\[
	\max_{0 \leq n \leq \lfloor T/\tau \rfloor} \|u_{\sigma}^n - U(\cdot,n\tau)\| \leq C\left(h^{k+1} + \tau^r\right),
	\]
	whenever $h \leq h_{\max}$.
\end{proposition}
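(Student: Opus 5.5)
The plan is to treat the OE step as a perturbation source inside the fully discrete RKDG error equation~\eqref{eq:sd-RKOEDG-advec}, and to close the estimate with the $L^2$ stability of the RKDG scheme with sources (\Cref{assump:stability}, resp.\ \Cref{lem:VC:stab}) together with an induction on $n$.

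\textbf{Step 1 (error splitting).} Introduce RK stage reference functions $U^{n,m}$. These are defined by the exact-in-space RK stages $U^{n,l+1}=\sum_m (c_{lm}U^{n,m}-\tau d_{lm}\nabla\cdot(\bm\beta U^{n,m}))$ with $U^{n,0}=U(t^n)$, so that $U^{n,s}-U(t^{n+1})=O(\tau^{r+1})$. Split $u_\sigma^{n,m}-U^{n,m}=\xi^{n,m}-\eta^{n,m}$, where $\eta^{n,m}=U^{n,m}-\Pe U^{n,m}$ and $\xi^{n,m}=u_\sigma^{n,m}-\Pe U^{n,m}\in\mathbb V_h^k$. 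Here $\Pe$ is the Gauss--Radau projection in 1D, or a suitable 2D projection, so that $\|\eta\|\le Ch^{k+1}$ and $H(\eta,v)$ is either zero or $O(h^{k+1})\|v\|$.

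\textbf{Step 2 (error equation).} Subtracting the reference recursion from \eqref{eq:sd-RKOEDG-advec} gives
\[
\bigl(\xi^{n,l+1},v\bigr)=\sum_m\Bigl(c_{lm}(\xi^{n,m},v)+\tau d_{lm}H(\xi^{n,m},v)\Bigr)+\tau(\mathcal S^{n,l},v)+\tau\bigl(\mathcal E^{n,l+1},v\bigr).
\]
Here $\mathcal S$ collects the projection and RK truncation residuals, of size $O(h^{k+1})$ per unit time. The OE source is $\mathcal E^{n,l+1}=(\mathcal F_{\hat\tau_{l+1}}u_h^{n,l+1}-u_h^{n,l+1})/\tau$.

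\textbf{Step 3 (bounding the OE source).} This is the main obstacle. The OE ODE is linear once the coefficients are frozen and diagonal in the modal (Legendre) basis. Each mode of order $p$ is damped by the factor $\exp(-\hat\tau\sum_{q\le p}\delta_K^q)$, which acts as identity on the cell mean and decreases the other modes, so $\|\mathcal F_{\hat\tau} w\|\le\|w\|$. Using $1-e^{-x}\le x$ and $\hat\tau_{l+1}\le C\tau$, I will bound
\[
\|\mathcal E\|\le C\Bigl(\sum_K(\textstyle\sum_p\delta_K^p)^2\|u_h-P^{(0)}u_h\|_{L^\infty(K)}^2|K|\Bigr)^{1/2}.
\]
Condition~2 then controls $\sum_K(\delta_K^p)^2\le Ch^{-2}|u_h^{n,l+1}|_{\mathrm{jump}}^2$. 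The jumps of $\Pe U$ are $O(h^{k+1})$ in the appropriate trace sense, and the exact solution has no jumps, so $|u_h|_{\mathrm{jump}}\le C(h^{-1/2}\|\xi\|+h^{k+1/2})$ by the inverse inequality. Inside each cell, $\|u_h-P^{(0)}u_h\|_{L^\infty}\le Ch+Ch^{-d/2}\|\xi\|$. This is the point where $k\ge d/2$ is used, to keep the $L^\infty$ bound under the a priori assumption $\|\xi\|\le h^{k+1/2}$ (or $\le h^{d/2+1}$). Combining these gives $\|\mathcal E\|\le C(h^{k+1}+h^{-1/2}\|\xi\|+\dots)$.

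The factor $h^{-1/2}\|\xi\|$ does not close directly, so it must be absorbed using the numerical dissipation. The stability lemma provides the jump term $\tau\sum|\xi|_{\mathrm{jump}}^2$ on its left side. I will pair $\tau(\mathcal E,\xi)\le \tau\|\mathcal E\|\|\xi\|$ and apply Young's inequality, which leaves $\epsilon\tau|\xi|^2_{\mathrm{jump}}+C\tau\|\xi\|^2+C\tau h^{2k+2}$, provided the weights come out right. I expect that careful use of the sign structure ($\mathcal F$ is a contraction toward the cell mean, so $(\mathcal E,u_\sigma-\Pe U)$ is essentially nonpositive up to an $O(h^{k+1})$ coupling term) will make this work. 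Condition~1 is used only to replace $\delta_K(u_h^{n,l+1})$ by nearby stage values when comparing stages.

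\textbf{Step 4 (closure).} Feed the bounds into \Cref{assump:stability} under the CFL condition $\tau\le C_{\rm CFL}h^\kappa$ to obtain
\[
\|\xi^{n+1}\|^2\le(1+C\tau)\|\xi^n\|^2+C\tau(h^{k+1}+\tau^r)^2.
\]
Discrete Gronwall then gives $\|\xi^n\|\le C(h^{k+1}+\tau^r)$. The condition $r\ge d/2+1$, together with the CFL condition, ensures that this bound recovers the a priori assumption for $h\le h_{\max}$, which completes the induction. The triangle inequality with $\|\eta\|\le Ch^{k+1}$ and $U^{n,0}=U(t^n)$ then finishes the proof. The regularity assumed in time, $C^{r+1}$, controls the RK truncation terms, and the extra spatial regularity $H^{k+2}$ in 2D controls the non-orthogonality of $\Pe$.
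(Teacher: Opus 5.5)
\textbf{Verdict: same overall route as the paper, but Step~3 has a genuine gap.} Your architecture matches the paper's. You split with $\Pe$, treat the OE increment as a stage source in the RKDG error equation, and close with Lemma~\ref{assump:stability} and Gr\"onwall. You then run an induction that recovers an a priori $L^\infty$ bound via the inverse inequality, which is exactly where $k\ge d/2$ and $r\ge d/2+1$ (with the CFL condition) enter.

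The gap is this. The loss $h^{-1/2}\|\xi\|$ you arrive at is a power-counting slip, and the repair you propose is not available. Lemma~\ref{assump:stability} uses the stage sources only through $\tau\sum_\ell\|g^{n,\ell+1}\|^2$, and its left-hand side has no jump term. So neither interface dissipation nor a sign property of $(\mathcal E,u_\sigma-\Pe U)$ can be exploited without reopening the multistage matrix-transferring energy proof. The sign property itself is only asserted. As written, the one genuinely nonlinear estimate of the proof is left open.

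\textbf{The fix needs no absorption.} The OE coefficients are evaluated at the pre-OE stage $u_h^{n,\ell+1}$. So place the a priori hypothesis on $\zeta^{n,\ell+1}=u_h^{n,\ell+1}-\Pe U^{n,\ell+1}$, namely $\|\zeta^{n,\ell+1}\|_{L^\infty(\Omega)}\le h$, and use it to bound the cellwise oscillation by $Ch$ outright. Do not carry a separate $h^{-d/2}\|\zeta\|$ factor. Then:
\begin{itemize}
\item $\|u_h^{n,\ell+1}-P^{(p-1)}u_h^{n,\ell+1}\|_{L^2(K)}\le Ch^{1+d/2}$; this is Lemma~\ref{lmm:A3}.
\item Condition~2, continuity of $U$, and the trace inverse inequality give $\sum_K(\delta_K^p(u_h^{n,\ell+1}))^2\le Ch^{-3}(\|\zeta^{n,\ell+1}\|^2+h^{2k+2})\le Ch^{-2-d}(\|\zeta^{n,\ell+1}\|^2+h^{2k+2})$; this is Lemma~\ref{lmm:A2}.
\end{itemize}
Multiplying these, and using $\hat\tau_{\ell+1}/\tau=\sum_m d_{\ell m}\le C$, the powers of $h$ cancel exactly:
\[
\Bigl\|\frac{\mathcal F_{\hat\tau_{\ell+1}}u_h^{n,\ell+1}-u_h^{n,\ell+1}}{\tau}\Bigr\|\le C\bigl(\|\zeta^{n,\ell+1}\|+h^{k+1}\bigr).
\]
This is Lemma~\ref{lmm:A4}.

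You also need the stagewise bound $\|\zeta^{n,\ell+1}\|\le C\|\xi^{n}\|+C\tau(h^{k+1}+\tau^r)$. It follows by induction over stages from $|H(w,v)|\le Ch^{-1}\|w\|\|v\|$, $\tau\le Ch$, and the bound above at earlier stages. Together, these give exactly the source bound your Step~4 presupposes, and the rest of your argument then goes through as in the paper. Condition~1 plays no role in this estimate.
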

	We also obtain optimal error estimates for shift differences (\Cref{prop:deltaerr}) in each coordinate direction, based on \eqref{eq:jumpcontroll} and \eqref{eq:lipschitz-typecontinuity}.
In particular, we define $\Delta_x w(x) := w(x+h_x)-w(x)$ for $d = 1$, and for $d=2$, 
\[
\Delta_x w(x,y) := w(x+h_x,y)-w(x,y),
\qquad
\Delta^y w(x,y) := w(x,y+h_y)-w(x,y).
\]
\Cref{prop:deltaerr} provides the optimal error bounds for $\Delta_x u_\sigma^n$ and $\Delta^y u_\sigma^n$.

\begin{proposition}[Optimal discrete shifting estimate]\label{prop:deltaerr}
	In addition to the assumptions of \Cref{prop:opterror}, suppose that
	$U \in C^{r+1}\!\bigl([0,T];H^{k+2}(\Omega)\bigr)$ in 1D, and
	$U \in C^{r+1}\!\bigl([0,T];H^{k+3}(\Omega)\bigr)$ in 2D.
	Assume further that the initial data for the RK-aligned OE-type DG scheme \eqref{eq:sd-OEDG} satisfy
\begin{equation*}
	\nm{\Delta_{x}u_{\sigma}^0-\Delta_{x}U(\cdot,0)} \le Ch^{k+2}\quad \text{in 1D},
\end{equation*}
and, in 2D,
\begin{equation*}
	\nm{\Delta_{x}u_{\sigma}^0-\Delta_{x}U(\cdot,0)} \le Ch^{k+2},
	\qquad
	\nm{\Delta^{y}u_{\sigma}^0-\Delta^{y}U(\cdot,0)} \le Ch^{k+2}.
\end{equation*}
Then, whenever $\tau/h^{\kappa} \le C_{\mathrm{CFL}}$ and $h \le h_{\max}$, we have
\begin{equation*}
	\max_{0 \le n \le \lfloor T/\tau \rfloor}
	\nm{\Delta_{x}u_{\sigma}^n-\Delta_{x}U(\cdot,n\tau)}
	\le C\left(h^{k+2} + \tau^r\right)
	\quad \text{in 1D},
\end{equation*}
and, in 2D,
\begin{align*}
&	\max_{0 \le n \le \lfloor T/\tau \rfloor}
	\nm{\Delta_{x}u_{\sigma}^n-\Delta_{x}U(\cdot,n\tau)}
	\le C\left(h^{k+2} + \tau^r\right),\\
&	\max_{0 \le n \le \lfloor T/\tau \rfloor}
	\nm{\Delta^{y}u_{\sigma}^n-\Delta^{y}U(\cdot,n\tau)}
	\le C\left(h^{k+2} + \tau^r\right).
\end{align*}
\end{proposition}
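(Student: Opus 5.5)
The idea is to view $\Delta_x u_\sigma^n$ (and $\Delta^y u_\sigma^n$) as the numerical solution of a perturbed RK-aligned scheme. We then apply the $L^2$ stability of the RKDG scheme with sources (Lemma~\ref{assump:stability}, or Lemma~\ref{lem:VC:stab} for variable coefficients) to the shifted error equation.

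\textbf{Step 1: the shifted scheme.} On a uniform partition in the shifted direction, the space $\mathbb{V}_h^k$ is invariant under $w\mapsto w(\cdot+h_x)$. The constant-coefficient bilinear form commutes with the shift: $H(w(\cdot+h_x),v(\cdot+h_x))=H(w,v)$. Applying $\Delta_x$ to \eqref{eq:sd-RKOEDG-advec} shows that $\Delta_x u_\sigma^{n,l}$ satisfies the same RK recursion, with source
\[
S^{n,l+1}:=\Delta_x\Bigl(\frac{\mathcal{F}_{\hat\tau_{l+1}}u_h^{n,l+1}-u_h^{n,l+1}}{\tau}\Bigr).
\]
For variable $\bm\beta$ there is an extra commutator term $H_{\bm\beta(\cdot+h_x)}-H_{\bm\beta}$, of size $O(h)$ times $\|u_\sigma\|$-type quantities. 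This term is written as $h\,\tilde H(\cdot,\cdot)$ acting on $u_\sigma^{n,m}-U$ plus an exact-solution part. The exact part is consistent with $\Delta_x U$ and produces $O(h^{k+2})$ after projection. The error part is $O(h)\cdot\|u_\sigma-U\|=O(h^{k+2}+h\tau^r)$ by Proposition~\ref{prop:opterror}. This last bound needs an inverse-free estimate of the face terms, obtained by writing $\bm\beta(\cdot+h_x)-\bm\beta$ as $h\,\partial\bm\beta$ plus a smooth remainder.

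\textbf{Step 2: error splitting.} Write $\Delta_x u_\sigma^{n,m}-\Delta_x U^{n,m}=\xi^{n,m}-\eta^{n,m}$. Here $U^{n,m}$ are the RK stage reference solutions, $\eta=\Delta_xU^{n,m}-\Pe\Delta_xU^{n,m}$, and $\Pe$ is the Gauss--Radau-type projection in 1D, or the 2D projection in 2D. Since $\Delta_xU=O(h)$ in $H^{k+1}$ norms (one derivative more is used, hence the $H^{k+2}$ and $H^{k+3}$ assumptions), we get $\|\eta\|\le Ch^{k+2}$. The projection commutes with the upwind flux, which kills the interface terms of $H(\eta,v)$ in 1D; in 2D this holds up to $O(h^{k+2})$. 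The time truncation of the stage references gives $\tau^r$. Lemma~\ref{assump:stability} then yields
\[
\|\xi^{n}\|\le C\Bigl(\|\xi^0\|+h^{k+2}+\tau^r+\tau\sum_{n'\le n}\sum_l\|P S^{n',l}\|\Bigr),
\]
with $P$ the $L^2$ projection onto $\mathbb{V}_h^k$.

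\textbf{Step 3: the OE source, the main obstacle.} We must show $\|S^{n,l}\|\le C(h^{k+2}+\tau^r)$, i.e.\ that differencing gains one power of $h$ over the optimal bound. Solving \eqref{eq:OEprocedure} exactly (it is linear in $u_\sigma$, with modewise decay) gives
\[
\mathcal{F}_{\hat\tau}u-u=\sum_{p}\bigl(e^{-\hat\tau\sum_{q\le p}\delta_K^q}-1\bigr)\,(\text{mode-}p\text{ part of }u).
\]
Hence $|\mathcal{F}_{\hat\tau}u-u|/\tau\lesssim\sum_q\delta_K^q(u)\,\|(I-P^{(0)})u\|_K$. Its shift difference splits into two pieces:
\begin{itemize}[noitemsep]
\item $[\delta_{K+h_x}(u)-\delta_K(u)]\cdot(\text{modes})$, controlled by Condition~1 (in the form \eqref{eq:lipschitz-typecontinuity2}) with $v=u(\cdot+h_x)$ and $w=u$, which reduces to $h^{-1}|\Delta_xu|_{\mathrm{jump}}$;
\item $\delta_K\cdot\Delta_x(\text{modes})$, controlled by Condition~2.
\end{itemize}
The key facts are $|u_h|_{\mathrm{jump}}\le|u_h-\Pe U|_{\mathrm{jump}}+|\Pe U-U|_{\mathrm{jump}}\lesssim h^{-1/2}(h^{k+1}+\tau^r)$, and $|\Delta_x u_h|_{\mathrm{jump}}\lesssim h^{-1/2}(\|\xi\|+h^{k+2}+\tau^r)$. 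For the modes, $\|(I-P^{(0)})u\|_{L^\infty}\lesssim h$ and $\|\Delta_x(I-P^{(0)})u\|_{L^\infty}\lesssim h^2$ plus an inverse estimate on the error part, with $k\ge d/2$ used to pass from $L^2$ to $L^\infty$. Multiplying out gives $\|S\|\lesssim h^{k+2}+\tau^r+\|\xi\|$. The $\|\xi\|$ terms are absorbed by a discrete Gronwall argument. Stage quantities $u_h^{n,l}$ are related to $u_\sigma^{n,m}$ through the RK update, so the same bounds hold stagewise.

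I expect the delicate part to be the variable-coefficient 2D case. There the commutator and the non-orthogonality of $\Pe$ must still be closed, and I would lean on the outflow-edge-average preservation of the 2D projection for this. Combining Steps 1--3 with $\|\xi^0\|\le Ch^{k+2}$ (from the initial hypothesis plus $\|\eta^0\|$), and adding $\|\eta\|$ back, gives the claimed bound. The same argument applies to $\Delta^y$.
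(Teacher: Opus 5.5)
For constant $\bm\beta$ on a uniform mesh, your argument is essentially the paper's. Your $\xi$ equals $\Delta_x\zeta_\sigma$ because $\Pe$ commutes with the shift, so you are shifting the error equation \eqref{eq:0630-1}. The remaining steps match the paper's:
\begin{enumerate}
\item bound the shifted consistency source via the approximation property applied to $\Delta_xU$;
\item split the shifted OE source into $(\delta_{K+h_x}-\delta_K)\times$(modes of $u_h$) plus $\delta_K\times$(modes of $\Delta_xu_h$), and control these with Conditions~1--2;
\item relate the stage errors $\Delta_x\zeta_h^{n,l+1}$ to $\Delta_x\zeta_\sigma^n$ as in \eqref{eq:0720-3};
\item close with Lemma~\ref{assump:stability} and Gr\"onwall.
\end{enumerate}
Your explicit modal solution of the frozen-coefficient OE flow, combined with $|e^{-a}-e^{-b}|\le|a-b|$, cleanly justifies the bound on $\nm{\Delta_x\mathcal S_2^{n,l+1}}$ that the paper only displays. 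The exponent is $\hat\tau\sum_{q\le p}\delta_K^q$ for modes $p\ge1$; the cell average is untouched, so your formula needs that correction at $p=0$.

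The step that does not work as written is the variable-coefficient commutator in Step~1. Shifting the scheme produces the extra source $\tau d_{lm}H_{\Delta_x\bm\beta}\bigl(u_\sigma^{n,m}(\cdot+h_x),v\bigr)$. After removing the consistent exact part, you must bound $H_{\Delta_x\bm\beta}\bigl(\zeta_\sigma^{n,m}(\cdot+h_x),v\bigr)$.
\begin{itemize}
\item Its volume term $(\Delta_x\bm\beta\,\zeta_\sigma(\cdot+h_x),\nabla v)$ and its face terms both involve $\nabla v$ or traces of the discrete test function, which cost $h^{-1}$.
\item The factor $\nm{\Delta_x\bm\beta}_{L^\infty}=O(h)$ merely cancels this, leaving $O(\nm{\zeta_\sigma})=O(h^{k+1}+\tau^r)$. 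That is one order short.
\item Writing $\Delta_x\bm\beta=h_x\partial_x\bm\beta+O(h^2)$ does not produce an ``inverse-free'' bound. $H_{\partial_x\bm\beta}$ is still a first-order operator on $\mathbb V_h^k$, and its bound $Ch^{-1}\nm{w}\nm{v}$ is sharp.
\item Nor can the term be absorbed by Gr\"onwall into $\nm{\Delta_x\zeta_\sigma}$. A cell-periodic $w$ has $\Delta_xw=0$, while $\nm{h\nabla w}$ and $h^{1/2}|w|_{\rm jump}$ are comparable to $\nm{w}$.
\end{itemize}
In 2D the $\eta$-part also needs extra care, because the orthogonality of the frozen projection is tied to $\hat{\bm\beta}_{i,j}$, not to $\Delta_x\bm\beta$. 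Closing the gap would require something like $\nm{\zeta_\sigma}=O(h^{k+2}+\tau^r)$ in advance, for instance via a joint induction with the superconvergence estimate; Proposition~\ref{prop:opterror} alone is not enough.

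The paper does not supply this either. It proves the constant-coefficient 2D case and asserts that the variable-coefficient case follows ``with only minor modifications.'' So, like the paper's proof, your argument is complete only for constant $\bm\beta$.
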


\begin{theorem}[Main superconvergence result]\label{thm:superconvergence}
	Under the assumptions of \Cref{prop:deltaerr}, suppose that the RK-aligned OE-type DG scheme is initialized by 
	$
	u_{\sigma}^{0}=\Pe U(\cdot,0),
	$ 
	where $\Pe$ is the right Gauss--Radau projection in 1D, see \Cref{def:1Dprojection}, and the projection defined in \Cref{def:2Dprojection} in 2D (for the variable-wind case,  See \Cref{def:2Dproj_linvar} for the frozen-direction variant). With this choice, the initial discrete shifting conditions required in \Cref{prop:deltaerr} are satisfied. Then
	\begin{equation}\label{equ:errorest}
		\max_{0\le n\le N}\|u_{\sigma}^{n}-\Pe U(\cdot,t_n)\|\le C\,(h^{k+2}+\tau^{r}).
	\end{equation}
	Moreover, for \eqref{eq:advec}, the errors in the cell averages ($e_1$) and in the outflow-edge averages ($e_2$) satisfy
	\begin{equation}\label{equ:superest}
		\max_{0\le n\le N} e_1^n \le C\,(h^{k+2}+\tau^{r}),
		\qquad
		\max_{0\le n\le N} e_2^n \le C\,(h^{k+2}+\tau^{r}).
	\end{equation}
	Here we define
	\begin{equation}\label{equ:error}	\begin{aligned}
			e_1^n &:= \left(\sum_{K\in {\mathcal T}_h} |K|
			\left(\frac{1}{|K|}\int_K \bigl(u_\sigma^n-U(\cdot,t_n)\bigr)\,\mathrm{d}\bm{x}\right)^2\right)^{1/2},
			\\
			e_2^n &:= \left(\sum_{K\in {\mathcal T}_h} |K|
			\left(\frac{1}{|\partial^{\mathrm{out}} K|}\int_{\partial^{\mathrm{out}} K} \bigl(u_{\sigma}^n-U(\cdot,t_n)\bigr)\,\mathrm{d}s\right)^2\right)^{1/2},\end{aligned}
	\end{equation}
	where $\partial^{\mathrm{out}} K$ denotes the outflow boundary of the cell $K$. In 1D, $e_2^n$ is understood as
$	e_2^n
	=\left(h\sum_{i} 
	\bigl(u_\sigma^n(x_{i+\frac12}^-)-U(x_{i+\frac12},t_n)\bigr)^2\right)^{1/2}.
$
\end{theorem}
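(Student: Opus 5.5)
The proof splits the error through the auxiliary quantities of \Cref{sec:main_results}. We write $u_\sigma^{n,m}-\Pe U^{n,m} = \xi^{n,m} + (\Pe U^{n,m}-\Pe(U^{n,m}+Z^{n,m}))$. Here $U^{n,m}$ are the RK stage reference solutions, with $U^{n,0}=U(\cdot,t_n)$ and stage values built from time derivatives of $U$ through the Butcher coefficients so that the truncation error of the RK recursion is $O(\tau^{r+1})$. The $Z^{n,m}$ are the stage correction functions. The main estimate \eqref{equ:errorest} then follows from a bound $\|\xi^{n}\|\le C(h^{k+2}+\tau^r)$ together with $\|\Pe Z^{n,0}\|\le Ch^{k+2}$, since $Z^{n,0}$ vanishes or is small at the full time levels. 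Subtracting the reference scheme from \eqref{eq:sd-RKOEDG-advec} yields a fully discrete RKDG error equation for $\xi$ with two sources. The first is the projection defect $H(U^{n,m}-\Pe U^{n,m},v)$. The second is the OE source $\bigl(\mathcal{F}_{\hat\tau_{l+1}}u_h^{n,l+1}-u_h^{n,l+1}\bigr)/\tau$.

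The projection defect is handled by the defining property of $\Pe$. In 1D the right Gauss--Radau projection makes $H(U-\Pe U,v)=0$ exactly for constant $\beta$. In 2D the projection of \Cref{def:2Dprojection} preserves outflow-edge averages, which gives $|H(U-\Pe U,v)|\le Ch^{k+2}\|\cdot\|$-type bounds, or shift-difference bounds for the frozen-direction variant. For the OE source we linearize $\mathcal{F}_{\hat\tau}$. Because the OE flow is a linear ODE in $u_\sigma$ with frozen coefficients $\delta_K^p(u_h)$, we have $\mathcal{F}_{\hat\tau}w-w=-\hat\tau\sum_p\delta_K^p(w)(w-P^{(p-1)}w)+O(\hat\tau^2\delta^2)$. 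Applied to $w=u_h^{n,l+1}$, its size is governed by the jumps of $u_h$ through \eqref{eq:jumpcontroll}. These jumps are $O(h^{k+1})$ by \Cref{prop:opterror}, and their shift differences are $O(h^{k+2})$ by \Cref{prop:deltaerr}. Since $\hat\tau_{l+1}=\tau\sum_m d_{lm}$, the leading part of the source at stage $l+1$ equals $\sum_m d_{lm}$ times a common term $-\sum_p\delta_K^p(\cdot)(I-P^{(p-1)})\Pe U$. This is the same stage weight that multiplies $H$.

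The stage correction $Z^{n,m}$ is defined as the solution of an auxiliary RK-type recursion driven by exactly this common term. The aligned weights then make the low-order ($O(h^{k+1})$) OE contribution cancel stage by stage against $\sum_m d_{lm}H(Z^{n,m},v)$. What remains is controlled by three quantities. The first is differences $\delta_K^p(u_\sigma^{n,m})-\delta_K^p(\Pe U^{n,m})$, bounded by \eqref{eq:lipschitz-typecontinuity2} in terms of $h^{-1}$ times the jumps of the error. The second is the quadratic remainder in $\hat\tau$. The third is the action of $(I-P^{(p-1)})$, which produces an additional factor $h$ on smooth functions. Combined with the shift estimates of \Cref{prop:deltaerr}, with inverse inequalities to convert jump seminorms into $h^{-1/2}$ times $L^2$ norms, and with $\tau\le C_{\rm CFL}h^\kappa$, each residual will be bounded by $C\tau(h^{k+2}+\tau^r)$ per step. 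The fully discrete $L^2$ stability of \Cref{assump:stability} (resp.\ \Cref{lem:VC:stab}) then gives the bound for $\xi^n$ by discrete Gronwall, using $\xi^0=0$ from $u_\sigma^0=\Pe U(\cdot,0)$. That $\Pe$ initialization satisfies the initial shift hypotheses of \Cref{prop:deltaerr} follows from translation invariance of $\Pe$ on the uniform or periodic mesh together with the approximation property $\|\Delta_x(\Pe U-U)\|\le Ch\cdot h^{k+1}$.

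The estimates \eqref{equ:superest} follow from \eqref{equ:errorest} by splitting $u_\sigma^n-U=(u_\sigma^n-\Pe U)+(\Pe U-U)$. For the second piece, $\Pe$ preserves cell averages, since the Gauss--Radau projection is orthogonal to constants for $k\ge1$, and it preserves outflow-edge averages (in 1D, the outflow point value) by construction. The first piece is bounded by Cauchy--Schwarz on cell averages, and by the inverse trace inequality scaled by the weight $|K|/|\partial^{\rm out}K|\sim h$ on edges; both give $C\|u_\sigma^n-\Pe U\|$. The main obstacle is the construction of $Z$ and the verification that the nonlinear OE residual is genuinely $O(h^{k+2})$. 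This requires that the Lipschitz bound \eqref{eq:lipschitz-typecontinuity2} only be applied to errors already known to be small in $L^\infty$, which in turn uses $k\ge d/2$ and an inverse estimate, and that the $h^{-2}$ jump term be paid for by the shift-difference superconvergence rather than by the $O(h^{k+1})$ optimal estimate.
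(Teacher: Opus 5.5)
There is a genuine gap in your treatment of the linear, non-OE part of the error equation. Subtracting \eqref{eq:RKrefsol} from \eqref{eq:sd-RKOEDG-advec} gives, besides the OE increment, the source $\ip{\eta_c^{n,l+1}}{v}-H(\eta_d^{n,l+1},v)-\ip{\rho^{n,l+1}}{v}$, and your plan silently drops the first term. Since $U_c^{n,l+1}=\rho^{n,l+1}-\bm\beta\cdot\nabla U_d^{n,l+1}$, you have $\eta_c^{n,l+1}=(I-\Pe)\rho^{n,l+1}-(I-\Pe)(\bm\beta\cdot\nabla U_d^{n,l+1})$. The pairing $\ip{(I-\Pe)(\bm\beta\cdot\nabla U_d^{n,l+1})}{v}$ is only $O(h^{k+1})\nm{v}$. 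In 1D the Gauss--Radau projection is $L^2$-orthogonal to $\mathbb{P}^{k-1}$ only, so the top Legendre mode of $v$ sees the full projection error, and $H(\eta,v)=0$ does not help.

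In 2D your claim is also false. Preservation of outflow-edge averages yields only $H_{i,j}(\eta,1)=0$, cf. \eqref{eq:0721-4}. For nonconstant $v$, Lemma~\ref{lmm:2D-proj-property} gives just $|H(\eta,v)|\le Ch^{k+1}\nm{v}$. Fed into \Cref{assump:stability}, these sources cap your Gr\"onwall argument at $O(h^{k+1})$. The paper removes them with a second, classical correction $Z_2^{n,m}$, defined cellwise (zero cell mean in 2D) by $\ipproj{Z_2^{n,m}}{v}=\ip{\bm\beta\cdot\nabla U^{n,m}-\Pe(\bm\beta\cdot\nabla U^{n,m})}{v}+H(\eta^{n,m},v)$. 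This collapses $\widetilde{\mathcal{S}}_1^{n,l+1}$ exactly to $-\Pe\rho^{n,l+1}$. Your $Z$ targets only the OE term.

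Your OE correction is also mis-specified. The alignment cancellation you describe is the right one. However, the paper's $Z_1^{n,m}$ does not come from a recursion. It is obtained at each stage by inverting $\mathbb{P}_h^*$ cellwise (in 1D, $Z_1^{n,m}=-\Pe D^{-1}V^{n,m}$). That inversion gains a factor $h$, namely $\nm{Z_1^{n,m}}_K\le Ch\nm{V^{n,m}}_K$, and this is what gives $\nm{Z^{n,m}}\le C(h^{k+2}+\tau^r)$ at every stage. A recursion driven by an $O(h^{k+1})$ source over $T/\tau$ steps has no such gain, so your claim that $Z^{n,0}$ is small is unsupported.

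Adding $Z$ also creates two residuals that your plan never estimates. First, there is $\ip{Z_c^{n,l+1}}{v}$, naively of size $\tau^{-1}\nm{Z}$. The paper controls it by applying \eqref{eq:lipschitz-typecontinuity2} to the stage differences $\delta_K^p(u_h^{n,l+1})-\delta_K^p(u_h^{n,m})$ and then using \eqref{eq:0709-3}. The factor $\tau/h$ there cancels the $h/\tau$ in \eqref{eq:WW1}, which gives a bound linear in $\nm{\widetilde\zeta_\sigma^n}$ that Gr\"onwall absorbs. Second, in 2D, $-H(Z_d,v)-\ipproj{Z_d}{v}$ does not vanish but has the discrete-shift form \eqref{equ:diffstru}. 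Bounding it requires $\sum_{i,j}\nm{\Delta_x Z^{n,m}}_{L^\infty(I_{i,j})}^2\le C(h^{2k+4}+\tau^{2r})$, and the analogue for $\Delta^y$. This is where \Cref{prop:deltaerr} and \Cref{lmm:deltaOEerr-1}, the differences of $\delta^p$ between neighbouring cells, actually enter.

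Your proposed use of the shift estimates, to pay for the $h^{-2}$ jump term in Condition~1, is not a valid mechanism. An interface jump of the error is not a shift difference: it also contains the within-cell variation of the error.

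Your deduction of \eqref{equ:superest} from \eqref{equ:errorest} is fine and matches the paper.
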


Once \eqref{equ:errorest} is established, the estimates \eqref{equ:superest} follow immediately.
	Since the projection $\Pe U$ preserves both the cell average, see \Cref{def:1Dprojection,def:2Dprojection}, and the outflow-edge average, see \Cref{prop:structurepreserve_proj}, the errors $e_1^n$ and $e_2^n$ can be rewritten in terms of $\zeta_\sigma^n$ as
	\begin{align*}
	e_1^n
	&= \left(\sum_{K \in \mathcal{T}_h} |K|
	\left( \frac{1}{|K|} \int_K \zeta_{\sigma}^n \,\mathrm{d}\bm{x} \right)^2\right)^{1/2}
	\le \|\zeta_{\sigma}^n\|,
	\\
	e_2^n
	&= \left(\sum_{K \in \mathcal{T}_h} |K|
	\left( \frac{1}{|\partial^{\mathrm{out}} K|} \int_{\partial^{\mathrm{out}} K} \zeta_{\sigma}^n \,\mathrm{d}s \right)^2\right)^{1/2}
	\le \|\zeta_{\sigma}^n\|.
	\end{align*}
	The final inequalities follow from standard inverse estimates.

Throughout the proof of the main results, we will repeatedly use the following standard inverse estimates on $\forall v\in \mathbb{P}^k(K)$, where $C$ depends only on the polynomial degree and the mesh regularity
	\begin{align} \label{eq:inveq}
		\nm{\nabla v}_{L^2(K)} \le Ch^{-1}\nm{v}_{L^2(K)},  \quad 
		\nm{v}_{L^\infty(K)} \le Ch^{-\frac{d}{2}}\nm{v}_{L^2(K)}.  
	\end{align}
	
	\section{Proofs of main results for linear constant advection equations}\label{sec:proof_const}
	\subsection{\texorpdfstring{$L^2$}{L2} stability of the RKDG method}
	\label{sec:opt}
	As stated earlier, the fully discrete error analysis will be based on the following $L^2$ stability results for the standard RKDG method with sources, which follows from \cite{xu2020superconvergence,sun2019strong,xu2020error,xu2019l2} and was presented in  \cite[Proposition 4.4]{O_Peng_2024}. 
	\begin{lemma}[RKDG $L^2$–stability with sources]\label{assump:stability}
		Consider the RKDG scheme without the OE procedure but with stage source $\{g^{n,m}\}$ formulated as
		\begin{subequations}
			\begin{align*}
				u_h^{n,0} &= u_h^{n},\\
				\ip{u_h^{n,\ell+1}}{v} &= \sum_{m=0}^{\ell}\!\Big(c_{\ell m}\,\ip{u_h^{n,m}}{v}
				+ \tau\, d_{\ell m}\, H\!\left(u_h^{n,m},v\right)\Big)
				+ \tau\,\ip{g^{n,\ell+1}}{v},\quad \ell=0,\dots,s-1,\\
				u_h^{n+1} &= u_h^{n,s},
			\end{align*}
		\end{subequations}
		with test functions $v\in\mathbb{V}_h^{k}$ and stage data $\{g^{n,\ell+1}\}\subset\mathbb{V}_h^{k}$. Under a CFL constraint $\tau\le C_{\mathrm{CFL}} h^\kappa$, there exists $C>0$ independent of $h,\tau$ such that
		\begin{equation}\label{eq:stability}
			\|u_h^{n+1}\|^{2}\;\le\; \bigl(1+C_{\mathrm{s}}\tau\bigr)\,\|u_h^{n}\|^{2}
			\;+\; C\,\tau\sum_{\ell=0}^{s-1}\!\|g^{n,\ell+1}\|^{2}.
		\end{equation}
		Specifically, for an $r$–stage, $r$th-order explicit RK method one may choose $\kappa$ as in \cite{xu2020superconvergence,sun2019strong}.
	\end{lemma}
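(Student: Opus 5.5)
The plan is to exploit linearity and split the scheme into a homogeneous RKDG evolution plus a superposition of source contributions. Let $L_h:\mathbb{V}_h^k\to\mathbb{V}_h^k$ be defined by $\ip{L_h w}{v}=H(w,v)$ for all $v\in\mathbb{V}_h^k$, so that each stage reads $u_h^{n,\ell+1}=\sum_{m=0}^{\ell}\big(c_{\ell m}u_h^{n,m}+\tau d_{\ell m}L_h u_h^{n,m}\big)+\tau g^{n,\ell+1}$. Unrolling the stages gives
\begin{equation*}
u_h^{n+1}=R_s(\tau L_h)\,u_h^{n}+\tau\sum_{\ell=0}^{s-1}Q_{\ell}(\tau L_h)\,g^{n,\ell+1}.
\end{equation*}
Here $R_s$ is the stability polynomial of the RK method (consistent, since $\sum_m c_{\ell m}=1$), and each $Q_\ell$ is a polynomial of degree at most $s-1-\ell$ whose coefficients depend only on the Butcher-type coefficients $c_{\ell m},d_{\ell m}$.

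\textbf{Step 1: bound the source propagators.} By the inverse estimate \eqref{eq:inveq}, the trace inequality, and quasi-uniformity (respectively uniformity in 2D), one has $\|L_h w\|\le C h^{-1}\|w\|$, with $C$ depending on $\|\bm\beta\|_{L^\infty}$. Under the CFL restriction $\tau\le C_{\mathrm{CFL}}h^\kappa$ with $\kappa\ge 1$ and $h$ small, this gives $\|\tau L_h\|\le C$, so $\|Q_\ell(\tau L_h)\|\le C$ uniformly in $h$ and $\tau$. Consequently,
\begin{equation*}
\Big\|\tau\sum_{\ell}Q_\ell(\tau L_h)g^{n,\ell+1}\Big\|\le C\tau\sum_\ell\|g^{n,\ell+1}\|.
\end{equation*}

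\textbf{Step 2: the homogeneous estimate $\|R_s(\tau L_h)w\|^2\le(1+C_s\tau)\|w\|^2$.} This is the main obstacle, and I would not re-derive it; I would invoke the known fully discrete energy analysis of \cite{xu2020superconvergence,sun2019strong,xu2019l2}. The key structural fact is that the upwind form satisfies
\begin{equation*}
H(w,w)=-\tfrac12\sum_e\int_e(\bm\beta\cdot\bm n_e)\jump{w}^2\,\mathrm{d}S+\tfrac12\big((\nabla\cdot\bm\beta)w,w\big)\le -c\,[w]^2+C\|w\|^2,
\end{equation*}
where $[w]$ denotes the $\bm\beta$-weighted jump seminorm. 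This makes $L_h$ semi-negative up to an $O(1)$ shift, and gives $\|L_h w\|^2\le C h^{-1}[w]^2+C\|w\|^2$.

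Expanding $\|R_s(\tau L_h)w\|^2$ by the matrix-transferring technique, one obtains $\|w\|^2$, plus a negative combination of jump terms of the iterates $L_h^j w$, plus a remainder of size $\tau^{\zeta}\|\tau L_h\|^{\zeta}$ times higher-order terms, plus $C\tau\|w\|^2$. The remainder is absorbed by the negative jump terms under $\tau\le C_{\mathrm{CFL}}h^\kappa$. The exponent $\kappa$ (e.g.\ $\kappa=1$ for RK3 or RK4 with $k$ admissible, $\kappa=4/3$ for RK2 or Euler-type) is dictated exactly by this absorption, which is why the statement defers its choice to \cite{xu2020superconvergence,sun2019strong}. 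For the variable-coefficient case the $\nabla\cdot\bm\beta$ term is what produces $C_s>0$.

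\textbf{Step 3: combine.} Write $u_h^{n+1}=A+B$, where $A=R_s(\tau L_h)u_h^n$ and $B$ is the source part. By Young's inequality, $\|A+B\|^2\le(1+\tau)\|A\|^2+(1+\tau^{-1})\|B\|^2$. Steps 1 and 2 then give
\begin{equation*}
\|u_h^{n+1}\|^2\le(1+\tau)(1+C_s\tau)\|u_h^n\|^2+C(\tau+\tau^2)\sum_\ell\|g^{n,\ell+1}\|^2.
\end{equation*}
Relabeling the constants, this is \eqref{eq:stability}.
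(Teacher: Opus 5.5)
Your argument is correct, but it organizes the proof differently from the paper. The paper does not prove this lemma itself. It defers to the cited energy analyses and to \cite[Proposition 4.4]{O_Peng_2024}. In the variable-coefficient analogue (Lemma~\ref{lem:VC:stab}) it applies the matrix-transferring identity to the temporal differences $\mathbb{D}_q u_h^n$, and the source contribution $C\tau\sum_\ell\nm{g^{n,\ell+1}}^2$ enters only in the final bound.

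You instead use linearity to split $u_h^{n+1}=R_s(\tau L_h)u_h^n+\tau\sum_\ell Q_\ell(\tau L_h)g^{n,\ell+1}$. You bound the source propagators using only weak boundedness and $\tau\le Ch$, then recombine with Young's inequality weighted by $\tau$. Because the source part is $O(\tau)$ in norm, both the factor $(1+\tau)$ and the term $(1+\tau^{-1})\,O(\tau^2)$ fit the target form. This makes the source treatment fully explicit and reduces the lemma exactly to one-step stability of the homogeneous RKDG operator, so any result of that type (monotonicity or weak($\gamma$) stability) transfers verbatim. The energy-identity route instead produces the homogeneous estimate within the same computation. That is how the paper carries it over to the variable-coefficient operator via the quasi-skew-symmetry of Lemma~\ref{lem:VC:skewsym}.

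One claim in your sketch of Step~2 is false and should be dropped: $\nm{L_h w}^2\le Ch^{-1}[w]^2+C\nm{w}^2$. For continuous $w\in\mathbb{V}_h^k$ all jumps vanish, yet $L_h w=-\bm\beta\cdot\nabla w$ (constant $\bm\beta$) can have norm of order $h^{-1}\nm{w}$; a continuous piecewise-linear zigzag is an example. If the bound held, the identity $\nm{(I+\tau L_h)w}^2=\nm{w}^2+2\tau H(w,w)+\tau^2\nm{L_hw}^2$ would already give forward-Euler stability under a standard CFL condition $\tau\le ch$. That fails for $k\ge1$, and it contradicts your own remark that $\kappa>1$ is sometimes needed. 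Forward Euler in fact needs more than $\tau\le Ch^{4/3}$, so your parenthetical value there is also off; the energy method gives $\tau\le Ch^{2}$.

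The ingredients actually used in the matrix-transfer analysis are the jump identity for $H(w,v)+H(v,w)$, weak boundedness, and the trace inverse bound $[w]^2\le Ch^{-1}\nm{w}^2$. Moreover, in the weak($\gamma$) case the leftover term $\lambda^{\gamma}\nm{w}^2$, with $\lambda=\tau/h$, is not absorbed by jump terms. It is turned into $C\tau\nm{w}^2$ by the stronger CFL restriction, which is where $\kappa=\gamma/(\gamma-1)$ comes from. Since you cite Step~2 rather than derive it, none of this affects the validity of your argument.
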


	\subsection{Auxiliary estimates for the OE stabilization}\label{sec:aux_opt}
The fully discrete superconvergence argument builds on two auxiliary ingredients inherited from the OE framework:
(i) an optimal-order error estimate for the stabilized solution and its discrete shifts, and 
(ii) quantitative bounds on the OE coefficients and on the OE procedure increment. 
	The RK-aligned OE procedure $\mathcal{F}_{\hat\tau_{l+1}}$ exhibits similar properties to those of the original OE procedure of the standard OEDG method \cite{O_Peng_2024}.  As this paper focuses on the superconvergence properties of the RK-aligned OE-type DG scheme \eqref{eq:sd-OEDG}, we omit the analogous proof details here. The proof of Proposition~\ref{prop:opterror} proceeds by invoking the error decomposition \eqref{eq:notation0} and following the arguments established in \cite{O_Peng_2024}. Specifically, by deriving the intermediate estimates in Lemmas~\ref{lmm:A2}--\ref{lmm:A3} and applying a discrete Gr\"onwall inequality, we verify the a priori assumption $\|\zeta^{n,l+1}\|_{L^{\infty}(\Omega)}\le h$ to close the mathematical induction. This yields the final error bound
		$$ \nm{\zeta^{n,l+1}} \leq C\bigl( h^{k+1} + \tau^r \bigr), $$
		which completes the proof of Proposition~\ref{prop:opterror}. Furthermore, Proposition~\ref{prop:opterror}, in conjunction with Lemmas~\ref{lmm:A2} and \ref{lmm:A4}, immediately implies Lemma~\ref{lmm:OEerr}.
		
		
		\begin{lemma}\label{lmm:A2}
			Using inverse inequalities, \eqref{eq:jumpcontroll} yields the following bound for the damping coefficients
			\begin{equation}
				\sum_{K \in \mathcal{T}_h}\left( \delta_K^p\left( u_h^{n,l+1} \right) \right)^2 \leq Ch^{-2-d}\left( \nm{\zeta^{n,l+1}}^2 + h^{2k+2}\nm{U^{n,l+1}}_{H^{k+1}\left( \Omega \right)}^2 \right).
			\end{equation}
		\end{lemma}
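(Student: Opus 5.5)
We combine \eqref{eq:jumpcontroll} with a decomposition of the jumps of $u_h^{n,l+1}$. Applying \eqref{eq:jumpcontroll} with $w=u_h^{n,l+1}$ gives
\[
\sum_{K}\bigl(\delta_K^p(u_h^{n,l+1})\bigr)^2\le Ch^{-2}\,|u_h^{n,l+1}|_{\mathrm{jump}}^2,
\]
so it suffices to show that $|u_h^{n,l+1}|_{\mathrm{jump}}^2\le Ch^{-d}\bigl(\nm{\zeta^{n,l+1}}^2+h^{2k+2}\nm{U^{n,l+1}}_{H^{k+1}(\Omega)}^2\bigr)$. For this we use the error decomposition \eqref{eq:notation0}: $u_h^{n,l+1}-U^{n,l+1}=\zeta^{n,l+1}-\eta^{n,l+1}$, where $\zeta^{n,l+1}=u_h^{n,l+1}-\Pe U^{n,l+1}\in\mathbb{V}_h^k$ and $\eta^{n,l+1}=U^{n,l+1}-\Pe U^{n,l+1}$, with the stage reference solution $U^{n,l+1}$ smooth. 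Since $U^{n,l+1}$ is continuous across interfaces, its jumps vanish, and therefore
\[
\jump{u_h^{n,l+1}}_e=\jump{\zeta^{n,l+1}}_e-\jump{\eta^{n,l+1}}_e .
\]

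We bound the two pieces separately.
\begin{itemize}
\item \emph{The $\zeta$ part.} Since $\zeta^{n,l+1}$ is piecewise polynomial, the trace inverse inequality $\int_e v^2\,\mathrm{d}S\le Ch^{-1}\nm{v}_{L^2(K)}^2$ on each neighbouring cell applies. It follows from \eqref{eq:inveq}, e.g.\ via $\nm{v}_{L^2(e)}^2\le |e|\,\nm{v}_{L^\infty(K)}^2\le Ch^{d-1}h^{-d}\nm{v}_K^2$. Summing over edges, each cell being counted a bounded number of times, gives $|\zeta^{n,l+1}|_{\mathrm{jump}}^2\le Ch^{-1}\nm{\zeta^{n,l+1}}^2$.
\item \emph{The $\eta$ part.} The standard trace estimate $\nm{\eta}_{L^2(\partial K)}^2\le C\bigl(h^{-1}\nm{\eta}_K^2+h\nm{\nabla\eta}_K^2\bigr)$ combined with the approximation property $\nm{\eta}_K+h\nm{\nabla\eta}_K\le Ch^{k+1}|U^{n,l+1}|_{H^{k+1}(K)}$ of $\Pe$ gives $|\eta^{n,l+1}|_{\mathrm{jump}}^2\le Ch^{2k+1}\nm{U^{n,l+1}}_{H^{k+1}(\Omega)}^2$. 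In 2D this property would be checked for the projection of \Cref{def:2Dprojection}; if it needs $H^{k+2}$, one could instead split through the $L^2$ projection, whose jump contributions are identical.
\end{itemize}

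Combining, $h^{-2}|u_h^{n,l+1}|_{\mathrm{jump}}^2\le Ch^{-3}\bigl(\nm{\zeta^{n,l+1}}^2+h^{2k+2}\nm{U^{n,l+1}}^2_{H^{k+1}}\bigr)$. This is a sharper power than the stated $h^{-2-d}$ when $d=1$, and equal to it when $d=2$ only after weakening. Since $h\le 1$ and $d\ge1$, we have $h^{-3}\le h^{-2-d}$ for $d=1$; for $d=2$ the stated factor $h^{-4}$ exceeds $h^{-3}$, so the claim follows in both cases. The only real point needing care is the approximation and trace property of $\Pe$ in the norm used for $\eta$, especially in 2D, where the relevant projection is defined later. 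The cleanest fallback is to replace $\Pe U$ by any optimal-order polynomial approximation and absorb the difference into $\zeta$ via the triangle inequality and the optimal bounds.
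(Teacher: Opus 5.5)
Your argument is correct and is essentially the paper's approach: the paper's one-line justification (apply \eqref{eq:jumpcontroll} and use inverse inequalities) is exactly what you expand. You apply \eqref{eq:jumpcontroll} to $u_h^{n,l+1}$, write $\jump{u_h^{n,l+1}}=\jump{\zeta^{n,l+1}}-\jump{\eta^{n,l+1}}$, bound the $\zeta$-jumps by the inverse trace inequality, and bound the $\eta$-jumps by the continuous trace inequality plus the local approximation of $\Pe$. That $H^1$-type approximation does hold for both projections by Bramble--Hilbert, since each is local, reproduces $\mathbb{P}^k$, and is $H^1$-bounded on the reference cell, so your fallback is unnecessary. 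One slip: your factor $h^{-3}$ equals $h^{-2-d}$ for $d=1$ and is sharper for $d=2$, so the sentence before your final comparison has the two cases reversed, although the comparison itself is right.
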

		
		\begin{lemma}\label{lmm:A3}
			If $\nm{\zeta^{n,l+1}}_{L^{\infty}\left( \Omega \right)} \leq h$, then for $0 \leq p \leq k$,
			\begin{equation}
				\nm{u_h^{n,l+1}-P^{(p-1)}u_h^{n,l+1}}_{L^2\left( K \right)} \leq Ch^{1+\frac{d}{2}}	
			\end{equation}
		\end{lemma}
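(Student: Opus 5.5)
The plan is to split $u_h^{n,l+1}$ into a smooth part and a small discrete part, and to use two facts: $P^{(p-1)}$ is an $L^2(K)$-best approximation, and it reproduces constants for every $0\le p\le k$ (recall $P^{(-1)}:=P^{(0)}$).

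\textbf{Decomposition.} Following the error decomposition \eqref{eq:notation0}, write
\[
u_h^{n,l+1} = \Pi U^{n,l+1} + \zeta^{n,l+1}.
\]
Here $\Pi U^{n,l+1}\in\mathbb V_h^k$ is the projection of the RK stage reference solution used in that decomposition, and $\zeta^{n,l+1}$ is the discrete error. Since $I-P^{(p-1)}$ is linear, the triangle inequality gives
\[
\nm{u_h^{n,l+1}-P^{(p-1)}u_h^{n,l+1}}_{L^2(K)}\le \nm{(I-P^{(p-1)})\Pi U^{n,l+1}}_{L^2(K)}+\nm{(I-P^{(p-1)})\zeta^{n,l+1}}_{L^2(K)}.
\]

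\textbf{Discrete part.} The operator $I-P^{(p-1)}$ is an $L^2(K)$-orthogonal projection, so it has norm at most one. Hence
\[
\nm{(I-P^{(p-1)})\zeta^{n,l+1}}_{L^2(K)}\le\nm{\zeta^{n,l+1}}_{L^2(K)}\le |K|^{1/2}\nm{\zeta^{n,l+1}}_{L^\infty(\Omega)}\le Ch^{d/2}\cdot h.
\]
The last step uses the hypothesis $\nm{\zeta^{n,l+1}}_{L^\infty(\Omega)}\le h$. This is precisely where the a priori assumption enters the induction.

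\textbf{Smooth part.} Since $P^{(p-1)}$ is the best $L^2(K)$ approximation from a space containing constants, we may compare with the cell average:
\[
\nm{(I-P^{(p-1)})\Pi U^{n,l+1}}_{L^2(K)}\le \nm{\Pi U^{n,l+1}-\overline{\Pi U^{n,l+1}}_K}_{L^2(K)}\le Ch\,\nm{\nabla \Pi U^{n,l+1}}_{L^2(K)}\le Ch^{1+\frac d2}\nm{\nabla \Pi U^{n,l+1}}_{L^\infty(K)}.
\]
The middle inequality is the Poincar\'e inequality on $K$. To finish, I will bound $\nm{\nabla\Pi U^{n,l+1}}_{L^\infty(K)}$ by $C\nm{U^{n,l+1}}_{W^{1,\infty}}$. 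This follows from the $W^{1,\infty}$-stability of the (Gauss--Radau type) projection on shape-regular cells, obtained via the inverse estimates \eqref{eq:inveq} together with its local approximation property. The stage reference solution $U^{n,l+1}$ is a finite combination of $U$ and its time derivatives, which are in turn spatial derivatives of $U$. The assumed regularity $U\in C^{r+1}([0,T];H^{k+1})$ with $k\ge d/2$ (and $H^{k+2}$ in 2D), combined with Sobolev embedding, then gives a uniform $W^{1,\infty}$ bound on $U^{n,l+1}$.

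\textbf{Main obstacle.} The only delicate point is this last step. One must check that $U^{n,l+1}$ and its projection are uniformly bounded in $W^{1,\infty}$ independently of $n$, $l$, $h$ and $\tau$. The cleanest route is to write $\Pi U-U$ and bound it by $Ch^{k+1-d/2}\nm{U}_{H^{k+1}}$ in $W^{1,\infty}$ locally after the inverse estimate. If the borderline $k=d/2$ causes trouble, use the extra $H^{k+2}$ regularity assumed in 2D. Combining the two parts yields the stated bound $Ch^{1+\frac d2}$, uniformly in $K$ and $p$.
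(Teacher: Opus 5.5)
Your argument is correct and has the same skeleton as the one the paper relies on. The paper omits this proof and defers to \cite{O_Peng_2024}, but it bounds the identical quantity in \eqref{eq:0708-1} and in the estimate of $\widetilde{\mathcal S}_4^{n,l+1}$. The shared steps are: split $u_h^{n,l+1}=\Pe U^{n,l+1}+\zeta^{n,l+1}$; use that $P^{(p-1)}$ is the $L^2(K)$-best approximation from a space containing constants; and bound the discrete part by $|K|^{1/2}\|\zeta^{n,l+1}\|_{L^\infty(\Omega)}\le Ch^{1+\frac d2}$.

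The only difference is the smooth part. There the paper compares $\Pe U^{n,l+1}$ with $U^{n,l+1}$ instead of differentiating the projection:
\[
\|(I-P^{(p-1)})\Pe U^{n,l+1}\|_{L^2(K)}\le\|\eta^{n,l+1}\|_{L^2(K)}+\|U^{n,l+1}-P^{(0)}U^{n,l+1}\|_{L^2(K)}\le Ch\|U^{n,l+1}\|_{H^1(K)}\le Ch^{1+\frac d2}\|U^{n,l+1}\|_{W^{1,\infty}(\Omega)}.
\]
This uses only the $m=1$ approximation property of $\Pe$. So the ``main obstacle'' you flag, the $W^{1,\infty}$-stability of $\Pe$, never arises.

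Your detour is valid but circular in effect. The natural proof of $\|\nabla\Pe U\|_{L^\infty(K)}\le C\|U\|_{W^{1,\infty}(K)}$ applies the inverse estimate to the polynomial $\Pe U-\bar U_K$. It cannot be applied to $\Pe U-U$, as your last paragraph suggests, since that is not a polynomial. That proof therefore already needs $\|\Pe U-\bar U_K\|_{L^2(K)}\le Ch\|U\|_{H^1(K)}$. Since $\|(I-P^{(p-1)})\Pe U\|_{L^2(K)}\le\|\Pe U-\bar U_K\|_{L^2(K)}$, that is exactly the bound you wanted in the first place. In particular, $k\ge d/2$ is not needed for this step. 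Both routes need only the uniform $W^{1,\infty}$ bound on the stage reference solutions, which you handle correctly via the time-regularity assumption and Sobolev embedding.
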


		Based on the analysis of \cite{O_Peng_2024}, it suffices to verify the following lemma for the RK-aligned OE-type DG scheme \eqref{eq:sd-OEDG}.
		\begin{lemma}\label{lmm:A4}
			If $\nm{\zeta^{n,l+1}}_{L^{\infty}\left( \Omega \right)} \leq h \leq C_{\star,l+1}$ and $\frac{\tau}{h^{\kappa}}\leq C_{\text{CFL}}$, we have
			\begin{equation}
				\nm{\frac{\mathcal{F}_{\hat{\tau}^{l+1}}u_h^{n,l+1} - u_h^{n,l+1}}{\tau}} \leq C\left( \nm{\zeta^{n,l+1}} + h^{k+1} \right).
			\end{equation}
	\end{lemma}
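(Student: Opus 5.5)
We estimate the OE increment by analyzing the linear ODE \eqref{eq:OEprocedure} explicitly, using the fact that the coefficients $\delta_K^p(u_h^{n,l+1})$ are frozen in pseudo-time. On each cell $K$, expand $u_h^{n,l+1}|_K$ in an $L^2$-orthogonal hierarchical (Legendre-type) basis and write $u_h^{n,l+1}|_K=\sum_{q=0}^k w_q$, where $w_q$ is the component of exact degree $q$. Then $u_\sigma-P^{(p-1)}u_\sigma$ is the sum of the components of degree $\ge p$ (for $p\ge1$; for $p=0$ the term equals the one for $p=1$). Hence the ODE decouples mode by mode: the degree-$q$ mode is multiplied by
\[
\exp\Bigl(-\hat\tau_{l+1}\sum_{p=0}^{\max(q,1)\wedge q}\delta_K^p\Bigr),
\]
and the cell average ($q=0$) is unchanged.

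Writing $\mu_{K,q}:=\sum_{p\le q}\delta_K^p(u_h^{n,l+1})\ge0$ (Condition 2 gives nonnegativity), we obtain
\[
\bigl(\mathcal{F}_{\hat\tau_{l+1}}u_h^{n,l+1}-u_h^{n,l+1}\bigr)\big|_K=\sum_{q\ge1}\bigl(e^{-\hat\tau_{l+1}\mu_{K,q}}-1\bigr)w_q .
\]
Since $1-e^{-x}\le x$ for $x\ge0$ and $\hat\tau_{l+1}=\tau\sum_m d_{lm}\le C\tau$ (the RK coefficients are fixed and $\sum_m d_{lm}\ge0$), the degree-$q$ contribution, divided by $\tau$, is bounded pointwise in $K$ by $C\mu_{K,q}|w_q|$. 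Orthogonality of the modes then gives
\[
\nm{\tfrac{\mathcal{F}u_h^{n,l+1}-u_h^{n,l+1}}{\tau}}_{L^2(K)}\le C\Bigl(\sum_{p=0}^k\delta_K^p\Bigr)\nm{u_h^{n,l+1}-P^{(0)}u_h^{n,l+1}}_{L^2(K)} .
\]
This is the step where RK alignment enters: the pseudo-time step is $O(\tau)$, so the division by $\tau$ costs nothing.

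It remains to sum over $K$. Lemma~\ref{lmm:A3} gives the crude bound $\nm{u_h^{n,l+1}-P^{(0)}u_h^{n,l+1}}_{L^2(K)}\le Ch^{1+d/2}$, uniformly in $K$. Squaring, summing, and applying Lemma~\ref{lmm:A2} yields
\[
\sum_K \Bigl(\sum_p\delta_K^p\Bigr)^2 h^{2+d}\le C h^{2+d}\,h^{-2-d}\bigl(\nm{\zeta^{n,l+1}}^2+h^{2k+2}\nm{U^{n,l+1}}_{H^{k+1}}^2\bigr).
\]
Taking square roots gives $C(\nm{\zeta^{n,l+1}}+h^{k+1})$, which is the claimed bound. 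The factor $h^{1+d/2}$ from Lemma~\ref{lmm:A3} exactly cancels the $h^{-1-d/2}$ from Lemma~\ref{lmm:A2}.

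The main obstacle is justifying the modal decoupling rigorously and being careful with the $p=0$ convention ($P^{(-1)}=P^{(0)}$). In that case the $p=0$ coefficient simply adds to the damping of every nonconstant mode, so the ODE remains diagonal. In the vector case the same argument is applied componentwise.

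We also need to confirm that the hypothesis $\nm{\zeta^{n,l+1}}_{L^\infty}\le h\le C_{\star,l+1}$ is precisely what Lemma~\ref{lmm:A3} requires, and that the smoothness of $U$ bounds $\nm{U^{n,l+1}}_{H^{k+1}}$ uniformly. The CFL condition is used only to ensure that the constant in $\hat\tau_{l+1}\le C\tau$ and the constants from the stage data remain uniform; no smallness of $\hat\tau\mu$ is needed, because $1-e^{-x}\le x$ holds for all $x\ge0$.
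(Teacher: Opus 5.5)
Your argument is correct and follows the route the paper indicates when it defers the details to the OEDG analysis of \cite{O_Peng_2024}: you solve the OE relaxation mode by mode, then use $1-e^{-x}\le x$ together with $\hat\tau_{l+1}/\tau=\sum_{m}d_{lm}$ to bound the cellwise increment by $C\bigl(\sum_p\delta_K^p\bigr)\nm{u_h^{n,l+1}-P^{(0)}u_h^{n,l+1}}_{L^2(K)}$, and finally pair Lemma~\ref{lmm:A3} with Lemma~\ref{lmm:A2} so that the powers of $h$ cancel. The only blemish is cosmetic: your exponent $\sum_{p=0}^{\max(q,1)\wedge q}\delta_K^p$ should read $\sum_{p=0}^{q}\delta_K^p$ for $q\ge1$ and $0$ for $q=0$, which is what your subsequent formula actually uses.
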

	Based on the optimal error estimate (\Cref{prop:opterror}) and \eqref{eq:jumpcontroll}, we can derive the following estimate that will be useful in the superconvergence proof. 
	\begin{lemma}\label{lmm:OEerr}
		Under the constraint ${\frac{\tau}{h^{\kappa}} \leq C_{\text{CFL}}}$ and $h \leq h_{\max}$, 
		\begin{equation*}
			\sum_{K \in \mathcal{T}_h}\left( \delta_K^p\left( u_h^{n,m} \right) \right)^2 \leq Ch^{-2-d}\left( h^{2k+2} + \tau^{2r} \right),
		\end{equation*}
		for some constant $C>0$ independent of $m$ and $n$. Therefore,
		\begin{equation*}
			\nm{\mathcal{F}_{\hat\tau_{l+1}}u_h^{n,l+1} - u_h^{n,l+1}} \leq C\tau \left( h^{k+1} + \tau^r \right)\quad l=0,\dots,s-1,\ \ \forall n.
		\end{equation*}
	\end{lemma}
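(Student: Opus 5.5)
The plan is to treat Lemma~\ref{lmm:OEerr} as a direct consequence of Proposition~\ref{prop:opterror}, Lemma~\ref{lmm:A2} and Lemma~\ref{lmm:A4}, applied at every stage.

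\textbf{Step 1: bound the stage error $\zeta^{n,m}$.} Write $\zeta^{n,m}$ for the stage error from the decomposition used in the proof of Proposition~\ref{prop:opterror}. That inductive argument already gives $\nm{\zeta^{n,l+1}}\le C(h^{k+1}+\tau^r)$ at every stage, uniformly in $n\le N$ and $l$, for $h\le h_{\max}$ under the CFL condition. The same argument verified the a priori hypothesis $\nm{\zeta^{n,l+1}}_{L^\infty(\Omega)}\le h$; I would recall why it closes. By the inverse estimate \eqref{eq:inveq},
\[
\nm{\zeta}_{L^\infty}\le Ch^{-d/2}\bigl(h^{k+1}+\tau^r\bigr).
\]
This is at most $h$ for $h\le h_{\max}$, because $k\ge d/2$, because $\tau\le C_{\mathrm{CFL}}h^\kappa$, and because $r\ge d/2+1$ together with the choice of $\kappa$ makes $h^{-d/2}\tau^r=o(h)$. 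The stage $m=0$ coincides with the post-OE solution $u_\sigma^n$, so it is covered as well.

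\textbf{Step 2: coefficient bound.} Insert the bound from Step 1 into Lemma~\ref{lmm:A2}:
\[
\sum_K\bigl(\delta_K^p(u_h^{n,m})\bigr)^2\le Ch^{-2-d}\Bigl(C\bigl(h^{k+1}+\tau^r\bigr)^2+h^{2k+2}\nm{U^{n,m}}_{H^{k+1}}^2\Bigr).
\]
The stage reference solutions $U^{n,m}$ are built from $U$ and finitely many of its time derivatives. Under the assumed regularity $U\in C^{r+1}([0,T];H^{k+1})$ their $H^{k+1}$ norms are therefore bounded independently of $n$ and $m$. Using $(a+b)^2\le 2a^2+2b^2$ then gives the first claim with a constant independent of $m$ and $n$.

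\textbf{Step 3: increment bound.} Step 1 supplies the hypotheses of Lemma~\ref{lmm:A4}, namely the $L^\infty$ bound and $h\le C_{\star,l+1}$ (shrink $h_{\max}$ if necessary). Multiplying the conclusion of Lemma~\ref{lmm:A4} by $\tau$ gives
\[
\nm{\mathcal F_{\hat\tau_{l+1}}u_h^{n,l+1}-u_h^{n,l+1}}\le C\tau\bigl(\nm{\zeta^{n,l+1}}+h^{k+1}\bigr)\le C\tau\bigl(h^{k+1}+\tau^r\bigr).
\]

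\textbf{Main obstacle.} The only delicate point is uniformity. The constants must not depend on $n$, $m$ or $l$, and the smallness thresholds $C_{\star,l+1}$ and $h_{\max}$ must be compatible with the induction in Proposition~\ref{prop:opterror}. Since there are finitely many stages $s$, I would take $h_{\max}\le\min_l C_{\star,l+1}$ together with the threshold from Step 1. The Gr\"onwall constant $e^{C_s T}$ is fixed, so uniformity over $n\le N$ follows.
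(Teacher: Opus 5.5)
Your proposal is correct and follows the paper's own route: the paper says that Proposition~\ref{prop:opterror}, in conjunction with Lemmas~\ref{lmm:A2} and~\ref{lmm:A4}, immediately implies the lemma. Proposition~\ref{prop:opterror} supplies the stagewise bound $\nm{\zeta^{n,l+1}}\le C(h^{k+1}+\tau^r)$ and the verified a priori $L^\infty$ bound, so your Steps~1--3 are exactly that argument. One minor caveat on your recalled justification in Step~1: the claim $Ch^{-d/2}(h^{k+1}+\tau^r)\le h$ for small $h$ needs strict exponents (e.g.\ $k>d/2$), because at $k=d/2$ it only gives $Ch$. This borderline issue is inherited from the paper's framework and does not affect the lemma, which only uses the a priori bound as already established in Proposition~\ref{prop:opterror}.
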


	Combining \Cref{prop:deltaerr} with \eqref{eq:jumpcontroll} and \eqref{eq:lipschitz-typecontinuity}, we can further derive estimates for the discrete shifts
	$\Delta_x\delta_K^p(\bm{u}_h)$ and $\Delta^y\delta_K^p(\bm{u}_h)$.
	
	Compared with the 1D case, the 2D proof is more technical, and therefore we only prove Proposition 3.2 in 2D in this subsection. At each RK stage, following \eqref{eq:notation0}, we decompose $\Delta_x u_{\sigma}^{n,m} - \Delta_x U^{n,m}$ into
		$
		\Delta_x u_{\sigma}^{n,m} - \Delta_xU^{n,m}=\Delta_x\zeta_{\sigma}^{n,m} - \Delta_x \eta^{n,m}.
		$
		The approximation of projection $\Pe $ yields that
		\begin{equation}\label{eq:0720-1}
			\nm{\Delta_x \eta^{n,m}}_{i,j} \leq Ch^{k+1}\nm{\Delta_x U^{n,m}}_{H^{k+1}\left( I_{i,j} \right)} \leq Ch^{k+3}\nm{U^{n,m}}_{W^{k+2,\infty}\left( \Omega \right)},
		\end{equation}
		so $\nm{\Delta_x \eta^{n,m}} \leq Ch^{k+2}$.
		To prove Proposition \ref{prop:deltaerr}, we now verify that
		$
		\nm{\Delta_x\zeta_{\sigma}^{n}} \leq C\left( h^{k+2} + \tau^r \right).
		$ From \eqref{eq:0630-1} and the definitions of $\ip{\cdot}{\cdot}$ and $H\left( \cdot,\cdot \right)$, we know that
		\begin{equation}
			\begin{aligned}
				\ip{\Delta_x\zeta_{\sigma}^{n,l+1}}{v} & = \sum_{m=0}^l \big(  c_{lm}\ip{\Delta_x\zeta_{\sigma}^{n,m}}{v} + \tau d_{lm} H\left( \Delta_x\zeta_{\sigma}^{n,m}, v \right) \big) \\
				& + \tau\ip{\Delta_x\mathcal{S}_1^{n,l+1}}{v} + \tau\ip{\Delta_x\mathcal{S}_2^{n,l+1}}{v}.
			\end{aligned}
		\end{equation}
		For $\Delta_x\mathcal{S}_1^{n,l+1}$, we can use estimation technique in \eqref{eq:0720-1} and derive that
		$
		\nm{\Delta_x\mathcal{S}_1^{n,l+1}} \leq C\left( h^{k+2} + \tau^r \right).
		$
		For $\Delta_x\mathcal{S}_2^{n,l+1}$, we have the following observation.
		$$\resizebox{0.99\hsize}{!}{$
		\begin{aligned}
			\nm{\Delta_x\mathcal{S}_2^{n,l+1}} & \leq C\left( \sum_{p=0}^k\sum_{i,j}\left( \delta_{I_{i+1,j}}^p\left( u_h^{n,l+1} \right) \right)^2\nm{\Delta_x u_h^{n,l+1}-P^{(p-1)}\Delta_x u_h^{n,l+1}}_{i,j}^2  \right)^\frac{1}{2} \\
			& + C\left( \sum_{p=0}^k\sum_{i,j}\left( \delta_{I_{i+1,j}}^p\left( u_h^{n,l+1} \right) - \delta_{I_{i,j}}^p\left( u_h^{n,l+1} \right) \right)^2\nm{ u_h^{n,l+1}-P^{(p-1)} u_h^{n,l+1}}_{i,j}^2 \right)^\frac{1}{2}.
		\end{aligned}$}$$
		Since we can obtain that
		
		\begin{align*}
			&\nm{\Delta_x u_h^{n,l+1}-P^{(p-1)}\Delta_x u_h^{n,l+1}}_{i,j} \\
			& \leq C\left( \nm{\Delta_x \zeta_h^{n,l+1}}_{i,j} + \nm{\Delta_x \eta^{n,l+1}}_{i,j} + \nm{\Delta_x U^{n,l+1}-P^{(0)}\Delta_x U^{n,l+1}}_{i,j}  \right) \\
			& \leq C \nm{\Delta_x \zeta_h^{n,l+1}}_{i,j} + Ch\nm{\Delta_x U^{n,l+1}}_{H^1\left( I_{i,j} \right)}  \leq C\nm{\Delta_x \zeta_h^{n,l+1}}_{i,j} +{ Ch^2\nm{ U^{n,l+1}}_{H^{3}\left( I_{i,j} \right)}},
		\end{align*}
		
		\begin{equation}\label{eq:0720-2}
			\resizebox{0.99\hsize}{!}{$	\begin{aligned}
				\sum_{i,j} \bigg( &\delta_{I_{i+1,j}}^p\left( u_h^{n,l+1} \right) - \delta_{I_{i,j}}^p\left( u_h^{n,l+1} \right) \bigg)^2\\& \leq Ch^{-2}\sum_{|{\bm \alpha}|=p}\sum_{i,j}\sum_{e\in\partial I_{i,j}}\int_e \jump{\partial^{\bm \alpha} \left( \Delta_xu_h^{n,l+1} \right)}_e^2\, {\rm d}S+ Ch^{2k}\nm{U^{n,l+1}}_{H^{k+2}\left( \Omega \right)}^2 \\
				& \leq Ch^{-2}\sum_{|{\bm \alpha}|=p}\sum_{i,j}\sum_{e\in\partial I_{i,j}} \int_e \left( \jump{\partial^{\bm \alpha} \left( \Delta_x \zeta_h^{n,l+1} \right)}_e^2 + \jump{\partial^{\bm \alpha}\left( \Delta_x \eta^{n,l+1} \right)}_e^2 \right) \, {\rm d}S+ Ch^{2k}\nm{U^{n,l+1}}_{H^{k+2}\left( \Omega \right)}^2 \\
				& \leq Ch^{-4}\nm{\Delta_x \zeta_h^{n,l+1}}^2 + Ch^{2k}\nm{U^{n,l+1}}_{H^{k+2}\left( \Omega \right)}^2,
			\end{aligned}$}
		\end{equation}
		and in the proof of \Cref{prop:opterror}, we have shown that $\nm{\zeta^{n,l+1}}_{L^{\infty}\left( \Omega \right)} \leq h$ when $h\leq h_{\max}$. By Lemma \ref{lmm:OEerr}, Lemma \ref{lmm:A2}, and \eqref{eq:0720-1},
		$
		\nm{\Delta_x\mathcal{S}_2^{n,l+1}} \leq C\left( \nm{\Delta_x \zeta_h^{n,l+1}} + h^{k+2} \right).
		$
		In the meantime, \eqref{eq:0630-1} also suggests that
		\begin{align*}
			\ip{\Delta_x\zeta_{h}^{n,l+1}}{v} = \sum_{m=0}^l \big(  c_{lm}\ip{\Delta_x\zeta_{\sigma}^{n,m}}{v} + \tau d_{lm} H\left( \Delta_x\zeta_{\sigma}^{n,m}, v \right) \big)  + \tau\ip{\Delta_x\mathcal{S}_1^{n,l+1}}{v}.
		\end{align*}
		Therefore, with the arguments in the proof of \Cref{prop:opterror} and induction hypothesis, we obtain
		\begin{equation}\label{eq:0720-3}
			\nm{\Delta_x\zeta_{h}^{n,l+1}} \leq C\nm{\Delta_x\zeta_{\sigma}^{n}} + C\left( h^{k+2} + \tau^r \right).
		\end{equation}
		Under Lemma~\ref{assump:stability}, this indicates that
		\begin{equation}
			\nm{\Delta_x\zeta_{\sigma}^{n+1}}= \nm{\Delta_x\zeta_{\sigma}^{n,s}} \leq \left( 1+C\tau \right)\nm{\Delta_x\zeta_{\sigma}^{n}} + C\tau\left( h^{k+2} + \tau^r \right).
		\end{equation}
		Finally, using the {Gr\"onwall} inequality, we prove that $\nm{\Delta_x\zeta_{\sigma}^{n}} \leq C\left( h^{k+2} + \tau^r \right)$. Thus,
		$
		\nm{\Delta_x u_{\sigma}^n-\Delta_x U^n} \leq \nm{\Delta_x\zeta_{\sigma}^{n}} + \nm{\Delta_x\eta^{n}} \leq C\left( h^{k+2} + \tau^r \right).
		$
		The above strategy can also be applied to prove that
		$
		\nm{\Delta^y u_{\sigma}^n-\Delta^y U^n} \leq C\left( h^{k+2} + \tau^r \right).
		$
		
		Having proven Proposition \ref{prop:deltaerr}, Lemma \ref{lmm:deltaOEerr-1} can be verified with \eqref{eq:0720-2} and \eqref{eq:0720-3}.

	Proceeding as in \Cref{lmm:OEerr}, we obtain the following estimate.
	\begin{lemma}\label{lmm:deltaOEerr-1}
		Under the assumptions of \Cref{prop:deltaerr} and the local Lipschitz-type continuity condition, there exists a constant $C>0$, independent of $n$, such that
		\begin{align*}
				&\sum_{i,j}\Bigl( \delta_{I_{i+1,j}}^p\bigl( u_h^{n,m} \bigr) - \delta_{I_{i,j}}^p\bigl( u_h^{n,m} \bigr) \Bigr)^2
				\le Ch^{-2-d}\left( h^{2k+4} + \tau^{2r} \right),\\
				&\sum_{i,j}\Bigl( \delta_{I_{i,j+1}}^p\bigl( u_h^{n,m} \bigr) - \delta_{I_{i,j}}^p\bigl( u_h^{n,m} \bigr) \Bigr)^2
				\le Ch^{-2-d}\left( h^{2k+4} + \tau^{2r} \right).
			\end{align*}
	\end{lemma}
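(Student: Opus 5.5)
We will argue exactly as in \Cref{lmm:OEerr}, but replacing the jump bound \eqref{eq:jumpcontroll} applied to $u_h^{n,m}$ by the Lipschitz-type bound \eqref{eq:lipschitz-typecontinuity} applied to the pair $v=u_h^{n,m}(\cdot+h_x,\cdot)$, $w=u_h^{n,m}$. The mesh is uniform Cartesian, so the shifted function is again in $\mathbb{V}_h^k$. Moreover, $\delta^p_{I_{i+1,j}}(u_h^{n,m})=\delta^p_{I_{i,j}}(v)$, because the OE coefficients are local and translation invariant. We only treat the $x$-shift; the $y$-shift is identical with $\Delta^y$.

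\textbf{Step 1: the smallness hypothesis.} We need $\|v-w\|_{L^\infty}=\|\Delta_x u_h^{n,m}\|_{L^\infty}\le Ch$. We split $\Delta_x u_h^{n,m}=\Delta_x\zeta_h^{n,m}+\Delta_x(\Pe U^{n,m})$. For the first piece, the inverse estimate \eqref{eq:inveq}, Proposition~\ref{prop:opterror}, and the a priori bound $\|\zeta^{n,m}\|_{L^\infty}\le h$ give $\|\Delta_x\zeta_h^{n,m}\|_{L^\infty}\le 2h$. For the second, smoothness of $U$ and stability of $\Pe$ give $\|\Delta_x \Pe U^{n,m}\|_{L^\infty}\le Ch$. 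Hence \eqref{eq:lipschitz-typecontinuity} applies and yields
\begin{equation*}
\sum_{i,j}\bigl(\delta_{I_{i+1,j}}^p(u_h^{n,m})-\delta_{I_{i,j}}^p(u_h^{n,m})\bigr)^2\le C\bigl(2|u_h^{n,m}|_{\mathrm{jump}}^2+h^{-2}|\Delta_x u_h^{n,m}|_{\mathrm{jump}}^2\bigr).
\end{equation*}

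\textbf{Step 2: the undifferenced jump term.} We bound $|u_h^{n,m}|_{\mathrm{jump}}^2\le 2|\zeta_h^{n,m}|_{\mathrm{jump}}^2+2|\eta^{n,m}|_{\mathrm{jump}}^2$, since $U$ is continuous. Trace and inverse inequalities give $|\zeta_h^{n,m}|_{\mathrm{jump}}^2\le Ch^{-1}\|\zeta_h^{n,m}\|^2$, and approximation gives $|\eta|_{\mathrm{jump}}^2\le Ch^{2k+1}$. This is the main obstacle: taken naively, this term is only of order $h^{-1}(h^{2k+2}+\tau^{2r})$, which is not of the claimed size $h^{-2-d}(h^{2k+4}+\tau^{2r})$ when $d=2$.

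I would try to avoid this by not using the crude form \eqref{eq:lipschitz-typecontinuity}. Instead I would use the sharper scale-explicit estimate \eqref{eq:lipschitz-typecontinuity2}. There the undifferenced jumps carry the factor $h^{-2}\|\Delta_x u_h^{n,m}\|_{L^\infty}^2$, which is $O(1)$, so this alone does not suffice either. So I would follow the derivation \eqref{eq:0720-2} from the proof of \Cref{prop:deltaerr}. There the difference of coefficients is controlled directly by the jumps of the derivatives $\partial^{\bm\alpha}(\Delta_x u_h^{n,m})$, $|\bm\alpha|=p$, plus a remainder $Ch^{2k}\|U\|_{H^{k+2}}^2$. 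That remainder is of order $h^{-2-d}h^{2k+4}$ when $d=2$, which is consistent with the claim.

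\textbf{Step 3: the differenced jump term.} We write $\Delta_x u_h^{n,m}=\Delta_x\zeta_h^{n,m}-\Delta_x\eta^{n,m}+\Delta_xU^{n,m}$; the last term has no jumps. Trace and inverse inequalities bound the jumps of $\partial^{\bm\alpha}\Delta_x\zeta_h^{n,m}$ by $Ch^{-2p-1}\|\Delta_x\zeta_h^{n,m}\|^2$. The estimate \eqref{eq:0720-1} bounds the $\Delta_x\eta$ jumps by $O(h^{2k+4-2p-1})$. This reproduces the final line of \eqref{eq:0720-2}:
\begin{equation*}
\sum_{i,j}\bigl(\delta_{I_{i+1,j}}^p-\delta_{I_{i,j}}^p\bigr)^2\le Ch^{-4}\|\Delta_x\zeta_h^{n,m}\|^2+Ch^{2k}.
\end{equation*}
Finally, \eqref{eq:0720-3} together with Proposition~\ref{prop:deltaerr} (via $\|\Delta_x\zeta_\sigma^n\|\le C(h^{k+2}+\tau^r)$) gives $\|\Delta_x\zeta_h^{n,m}\|\le C(h^{k+2}+\tau^r)$ uniformly in the stage $m$. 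Substituting, the right-hand side becomes $Ch^{-4}(h^{2k+4}+\tau^{2r})=Ch^{-2-d}(h^{2k+4}+\tau^{2r})$ for $d=2$; in 1D the bound is only better. The constant is independent of $n$ and $m$ because all the ingredients above are uniform in the stage index.
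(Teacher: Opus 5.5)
Your operative argument in Step~3 is exactly the paper's: the last line of \eqref{eq:0720-2}, with $\|\Delta_x\zeta_h^{n,m}\|\le C(h^{k+2}+\tau^r)$ supplied by \eqref{eq:0720-3} and \Cref{prop:deltaerr}, gives $Ch^{-4}(h^{2k+4}+\tau^{2r})$. The obstruction you report in Step~2 is an arithmetic slip ($h^{-1}(h^{2k+2}+\tau^{2r})=h^{2k+1}+h^{-1}\tau^{2r}\le h^{-4}(h^{2k+4}+\tau^{2r})$, and likewise $h^{-2}|\Delta_x u_h^{n,m}|_{\mathrm{jump}}^2\le C(h^{2k+1}+h^{-3}\tau^{2r})$ in 1D and 2D), so your abandoned direct route through \eqref{eq:lipschitz-typecontinuity} (Steps~1--2 plus the $p=0$ jump bounds of Step~3) already closes, and it is arguably cleaner: beyond the translation equivariance you invoke, it uses only the stated Lipschitz condition, whereas the derivative-jump inequality in the first line of \eqref{eq:0720-2} is not implied by Conditions~1--2 and, with your own trace factor $h^{-2p-1}$, yields its last line only for $p=0$ unless the coefficients weight the $p$th-derivative jumps by $h^{p}$ (as the concrete OEDG coefficients do).
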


	\subsection{Superconvergence analysis}\label{sec:superconvergence}
	In this subsection, we rigorously establish the superconvergence of the RK-aligned OE-type RKDG scheme \eqref{eq:sd-OEDG} for the linear advection equation \eqref{eq:advec} with constant velocity \(\bm{\beta}\). For notational simplicity, we take $\beta=(1,\dots,1)$ in the analysis below;
	the case of a general constant velocity with $\beta_i>0$ can be handled analogously.
		
	We first present an overview of the proof of \Cref{thm:superconvergence} and defer the precise definitions of the source error terms \(\left\{\mathcal{S}^{n,l+1} \right\}\), $\left\{ \widetilde{\mathcal{S}}^{n,l+1} \right\}$, and the correction functions \(\{Z^{n,m}\}\) to subsequent content.
	
	\begin{steps}
		\item {\bf Error decomposition.} At the $m$th RK stage of the $n$th time step, 
		decompose the error $u_{\sigma}^{n,m}-U^{n,m}$ 
		using the projection operator $\Pe $  as 
		\begin{equation}\label{eq:notation0}
			\zeta_{\sigma}^{n,m}=u_{\sigma}^{n,m}-\Pe U^{n,m},\ \ \zeta^{n,m}=u_{h}^{n,m}-\Pe U^{n,m},\ \ \eta^{n,m}=U^{n,m}-\Pe U^{n,m},
		\end{equation}
		where $\left\{ \zeta_{\sigma}^{n,m} \right\}$ satisfies an RKDG scheme with nonlinear source terms $\left\{ \mathcal{S}^{n,l+1} \right\}$. 
		
		\item {\bf Correction functions.} Define appropriate correction functions $\left\{ Z^{n,m} \right\}$ so that, in \eqref{eq:0630-1}, the low-order part of the stage-source error is canceled for all nonconstant test polynomials, while the constant mode vanishes by \Cref{prop:structurepreserve_proj}. We prove $\nm{Z^{n,m}} \leq C\left( h^{k+2} + \tau^r \right)$. 
		
		\item {\bf Source term correction.} A key property is that the projection no matter in 1D or 2D preserves its cell average and outflow edge average  (\Cref{prop:structurepreserve_proj}), which implies that the spatial error vanishes for piecewise constant test functions. Add $\left\{ Z^{n,m} \right\}$ into the RKDG scheme derived in \textbf{Step 1} to obtain an RKDG scheme for $\left\{ \widetilde{\zeta}_{\sigma}^{n,m}:=\zeta_{\sigma}^{n,m}+Z^{n,m} \right\}$ with corrected source terms $\left\{ \widetilde{\mathcal{S}}^{n,l+1} \right\}$.
		
		\item {\bf {Gr\"onwall} argument.} We show $\nm{\widetilde{\mathcal{S}}^{n,l+1}} \leq C\left( h^{k+2} + \tau^r \right)$. Then \Cref{assump:stability} yields that
		\begin{equation*}
			\nm{\widetilde{\zeta}_{\sigma}^{n+1}} \leq \left( 1+C\tau \right)\nm{\widetilde{\zeta}_{\sigma}^{n}} + C\tau\left( h^{k+2} + \tau^r \right)\quad \forall n.
		\end{equation*}
		Since $\nm{\widetilde{\zeta}_{\sigma}^{0}} = \nm{Z^0} \leq C\left( h^{k+2} + \tau^r \right)$ (by \textbf{Step 2}), we use the {Gr\"onwall}'s inequality and show that
		\begin{equation*}
			\nm{u_{\sigma}^n - \Pe U(\cdot,t_n)} \leq \nm{\widetilde{\zeta}_{\sigma}^{n}} + \nm{Z^n} \leq C\left( h^{k+2} + \tau^r \right).
		\end{equation*}
	\end{steps}
	\begin{remark}
		The superconvergence analysis in 1D and 2D appears to differ significantly. Since correction functions can only correct terms where the test function is not piecewise constant, we need to demonstrate that the scheme itself exhibits superconvergence. In 1D, the projection satisfies \(H_{i}( \Pe  (\omega),v ) = H_{i}( \omega,v )\). However, in 2D, unlike in 1D, it is possible that \(H_{i,j}( \Pe  (\omega),v ) \neq H_{i,j}\left( \omega,v \right)\). Nevertheless, a key observation is that the 2D projection preserves both its cell average and outflow edge averages, resulting in vanishing error in the spatial operator as described in \Cref{prop:structurepreserve_proj}.
	\end{remark}
	\begin{remark}
		Although our result in \Cref{thm:superconvergence} established in \Cref{sec:sup_analysis} focuses on the $\mathbb{P}^k$-based RK-aligned OE-type DG method,  
		superconvergence results for the $\mathbb{Q}^k$ elements can be easily derived within the same analytical framework.
	\end{remark}

	\subsubsection{Reference solutions}
	Following the roadmap outlined in Step 1 of our proof sketch, we begin by defining the reference solutions $\{U^{n,m}\}$ which serve as the high-order accurate targets for our analysis. 
	Following the approach in \cite{xu2020superconvergence}, we define a stage-by-stage reference solution $U^{n,m}$ based on the exact solution $U(x,t)$: 
	\begin{subequations}\label{eq:refsol}
		\begin{align}
			U^{n,0} & =  U(\cdot,t_n), \\
			U^{n,l+1} & = \sum_{m=0}^{l}\left( c_{lm}U^{n,m}-\tau d_{lm} {  \bm{\beta}\cdot \nabla U^{n,m} }\right),\quad l=0,\dots,s-2, \label{eq:0709-2} \\
			U^{n,s} & = U(\cdot,t_{n+1}).
		\end{align}
	\end{subequations}
	Denoting $U^n := U(\cdot, t_n)$ for each time step $t_n$, we can rewrite \eqref{eq:refsol}  
	in the formulation of the RKDG scheme as follows: 
	\begin{subequations}\label{eq:RKrefsol}
		\begin{align}
			U^{n,0} & = U^n  , \\
			\ip{U^{n,l+1}}{v} & = \sum_{m=0}^{l}\big( c_{lm}\ip{U^{n,m}}{v}+\tau d_{lm}H\left( U^{n,m},v \right) \big) + \tau\ip{\rho^{n,l+1}}{v}, \\
			U^{n+1} & = U^{n,s},
		\end{align}
	\end{subequations}
	where $\rho^{n,l+1}$ is the local truncation error defined as
	\begin{equation}\label{eq:lte}
		\rho^{n,l+1} = \begin{cases}
			\frac{U^{n+1}-\sum_{m=0}^{s-1}\left( c_{lm}U^{n,m}-\tau d_{lm} {  \bm{\beta}\cdot \nabla U^{n,m}} \right)}{\tau},\ \ & l=s-1,\\
			0,\ \ & otherwise.
		\end{cases}
	\end{equation}
	Since we assume that $U$ is sufficiently smooth, for an $r$th-order RK method, $\rho^{n,m}$ should satisfy that
	\begin{equation*}
		\nm{\rho^{n,l+1}}_{L^{\infty}(\Omega)} \leq C \tau^r \quad \forall0\leq l \leq s-1,\ \forall n.
	\end{equation*}
	
	\subsubsection{Projection operator} 
	Following the roadmap, we first define the projection operator $\Pe $ that sets the target for our superconvergence estimate. 
	For $d=1$ and $2$, the projection operator $\Pe $ has different definitions in our superconvergence analysis.
	
	\subsubsection*{\textbf{1D projection}}
	When $d=1$, $\Pe $ is chosen to be the right{ Gauss--Radau} projection defined in \Cref{def:1Dprojection}.
	\begin{definition}[Projection for 1D problem]\label{def:1Dprojection}
		Let $\ipproj{\omega}{v}_i=-\ip{\omega}{\frac{\mathrm{d}v}{\mathrm{d}x}}_i$. For $\omega \in H^1(\Omega)$, $\Pe  \omega$ is the DG polynomial in $\mathbb{V}_h^k$ such that
		\begin{equation}
			\begin{cases}
				\ipproj{\proj \omega}{v}_i=\ipproj{\omega}{v}_i,\ \ & \forall v \in \mathbb{P}^k(I_i),\ \ \forall i, \\
				\left( \Pe \omega \right) \left( x_{i+\frac{1}{2}}^- \right) = \omega\left( x_{i+\frac{1}{2}}^- \right),\ \ & \forall i.
			\end{cases}
		\end{equation}
		
	\end{definition}
	 The following lemma demonstrates the approximation property and the superconvergence property of the right Gauss--Radau projection.
	\begin{lemma}
		If $\omega \in H^{k+1}(\Omega)$ is sufficiently smooth, with Definition \ref{def:1Dprojection},
		\begin{align*}
				\nm{\omega - \Pe \omega}_i &\leq Ch^{m} \nm{\omega}_{H^{m}(I_i)}\quad m=1,\dots,k+1,\ \ \forall i, \\
			H_i(\omega-\widetilde P\omega,v)&=0,\qquad \forall v\in \mathbb{P}^k(I_i),\ \forall i.
		\end{align*}
	\end{lemma}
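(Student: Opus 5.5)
The plan is to work cell by cell, since both conditions in \Cref{def:1Dprojection} involve only the data on $I_i$. For $\omega\in H^1(\Omega)$ the trace $\omega(x_{i+\frac12}^-)$ is well defined. We first check that $\Pe$ is well posed, then prove the approximation bound by a scaling argument combined with Bramble--Hilbert, and finally read off the orthogonality identity for $H_i$ directly from the definition.

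\emph{Well-posedness.} As $v$ ranges over $\mathbb{P}^k(I_i)$, the derivative $\frac{\mathrm{d}v}{\mathrm{d}x}$ ranges over all of $\mathbb{P}^{k-1}(I_i)$. So the first condition says exactly that $\omega-\Pe\omega$ is $L^2(I_i)$-orthogonal to $\mathbb{P}^{k-1}(I_i)$. Together with the endpoint condition this gives $k+1$ linear conditions on the $k+1$ coefficients of $(\Pe\omega)|_{I_i}$. For unisolvence, take $\omega=0$ and let $p=\Pe\omega|_{I_i}$. Then $p\perp\mathbb{P}^{k-1}(I_i)$, so $p=c\,L_k(\xi)$ with $L_k$ the Legendre polynomial on the reference variable $\xi\in[-1,1]$. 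The endpoint condition gives $p(x_{i+\frac12}^-)=c\,L_k(1)=c=0$. Unisolvence also shows that $\Pe q=q$ for every $q\in\mathbb{P}^k(I_i)$.

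\emph{Approximation.} Next I would prove a local stability bound. Map $I_i$ to $[-1,1]$. On the reference cell the projection is a bounded linear map from $L^2\times\mathbb{R}$ (the function together with its endpoint value) into $\mathbb{P}^k$. Scaling back gives
\[
\nm{\Pe w}_i\le C\bigl(\nm{w}_i+h^{1/2}|w(x_{i+\frac12}^-)|\bigr)\le C\bigl(\nm{w}_i+h\,|w|_{H^1(I_i)}\bigr),
\]
where the last step uses the scaled trace inequality $|w(x_{i+\frac12}^-)|^2\le C(h^{-1}\nm{w}_i^2+h|w|_{H^1(I_i)}^2)$. Quasi-uniformity makes $C$ independent of $i$ and $h$. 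For any $q\in\mathbb{P}^k(I_i)$, polynomial invariance gives $\omega-\Pe\omega=(\omega-q)-\Pe(\omega-q)$. Hence $\nm{\omega-\Pe\omega}_i\le C\bigl(\nm{\omega-q}_i+h|\omega-q|_{H^1(I_i)}\bigr)$. Choosing $q$ by the Bramble--Hilbert lemma (for example a Taylor or $L^2$-type polynomial approximation of degree $m-1\le k$) then yields $Ch^m\nm{\omega}_{H^m(I_i)}$ for $m=1,\dots,k+1$. The only delicate point in this part is the trace term: it is exactly why the argument needs $m\ge1$, and why the factor $h^{1/2}$ has to be tracked carefully so that it combines with the trace inequality into the $h|\cdot|_{H^1}$ form.

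\emph{Orthogonality for $H_i$.} Set $\eta=\omega-\Pe\omega$ (with $\beta=1$; a constant $\beta>0$ only multiplies each term). Then
\[
H_i(\eta,v)=\ip{\eta}{v_x}_i+\eta(x_{i-\frac12}^-)\,v(x_{i-\frac12}^+)-\eta(x_{i+\frac12}^-)\,v(x_{i+\frac12}^-).
\]
The volume term equals $-\ipproj{\eta}{v}_i=0$ by the first defining condition. The last term vanishes by the endpoint condition on $I_i$. The middle term involves $\eta(x_{i-\frac12}^-)$, which is the right-endpoint value of $\eta$ on the upwind neighbour $I_{i-1}$; it vanishes by the endpoint condition imposed on $I_{i-1}$. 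Hence $H_i(\omega-\Pe\omega,v)=0$ for all $v\in\mathbb{P}^k(I_i)$ and all $i$.

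This identity holds precisely because the \emph{right} Radau point matches the upwind trace used by the flux.
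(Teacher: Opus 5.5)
Your proof is correct: unisolvence via $L_k(1)=1\neq 0$, the scaled stability bound combined with polynomial invariance and Bramble--Hilbert, and the vanishing of both upwind traces through the endpoint conditions on $I_i$ and on its upwind neighbour $I_{i-1}$ all go through. The paper gives no proof of this lemma and treats it as a standard property of the right Gauss--Radau projection; what you wrote is the standard argument it implicitly relies on.
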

	Moreover, the right Gauss--Radau projection is $L^\infty$ stable, as shown in \cite[Lemma 3.4]{yang2012analysis}.  

	\subsubsection*{\textbf{2D projection}}
	The superconvergence analysis in 2D presents a significant challenge due to the loss of a simple Galerkin orthogonality property for the error. A major technical obstacle arises when estimating flux error terms for piecewise constant test functions, which are essential for ensuring local conservation. To overcome this obstacle, we employ a special projection operator (\Cref{def:2Dprojection}) introduced in \cite{liu2020optimal}. 
	\Cref{prop:welldefiness} establishes that the projection $P_h^*$ is well-defined, and the corresponding local $L^\infty$ stability bounds are derived in \cite[Lemma 3.4]{yang2012analysis} and \cite[Lemma 2.1]{liu2020optimal}. While the operator itself comes from prior work, we identify an additional structure-preserving property---the preservation
 of outflow-edge averages---which is essential for closing the two-dimensional superconvergence analysis; see \Cref{prop:structurepreserve_proj}. This discovery is a cornerstone of our 2D analysis, allowing for the critical cancellation of flux errors for piecewise constant test functions. 
	
	\begin{definition}[2D projection]\label{def:2Dprojection}
		For $\omega \in H^1(\Omega)$, the projection $\Pe  (\omega) \in \mathbb{V}_h^k$ is defined to satisfy the following equations 
		\begin{equation}\label{eq:2Dprojection}
			\begin{cases}
				\ipproj{\proj (\omega)}{v}_{i,j}=\ipproj{\omega}{v}_{i,j},\ \ & \forall v \in \mathbb{P}^k(I_{i,j}),\ \ \forall i,j, \\
				\ip{\Pe  (\omega) }{1}_{i,j} = \ip{\omega}{1}_{i,j},\ \ & \forall i,j,
			\end{cases}
		\end{equation}
		where
		\begin{equation}\label{eq:0702-1}
			\begin{aligned}
				\ipproj{\omega}{v}_{i,j} = & - { \ip{\omega}{ \bm{\beta}\cdot \nabla  v}_{i,j}} 
				 + \int_{x_{i-1/2}}^{x_{i+1/2}} \omega(x, y_{j+1/2}^-) \big(v(x, y_{j+1/2}^-) - v(x, y_{j-1/2}^+)\big) \, \mathrm{d}x \\
				& + \int_{y_{j-1/2}}^{y_{j+1/2}} \omega(x_{i+1/2}^-, y) \big(v(x_{i+1/2}^-, y) - v(x_{i-1/2}^+, y)\big) \, \mathrm{d}y.
			\end{aligned}
		\end{equation}
	\end{definition}

	From \eqref{eq:0702-1}, we can prove the following proposition.
	\begin{proposition}\label{prop:welldefiness}
		When $d=2$, if $F \in \mathbb{V}_h^{k}$ satisfies
		$
		\ip{F}{1}_{i,j}=0$ for all $i,j$, 
		then there exists a unique $\omega \in \mathbb{V}_h^k$ such that
		\begin{subequations}
			\begin{align}
				\ip{\omega}{1}_{i,j} & =0, \\
				\ipproj{\omega}{v}_{i,j} & = \ip{F}{v}_{i,j}\quad \forall v \in \mathbb{P}^k\left( I_{i,j} \right), 
			\end{align}
		\end{subequations}
		for every $i,j$. Moreover, $\omega$ can be bounded by $F$ in the sense that
		\begin{equation}
			\nm{\omega}_{L^{\infty}\left( I_{i,j} \right)} \leq C \nm{F}_{i,j}\quad \forall i,j.
		\end{equation}
		Here, constant $C>0$ is independent of the choice of $F$.
	\end{proposition}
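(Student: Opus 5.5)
The key observation is that the form $\ipproj{\cdot}{\cdot}_{i,j}$ in \eqref{eq:0702-1} is purely local. It uses only $\omega|_{I_{i,j}}$ together with its interior traces $\omega(x_{i+1/2}^-,\cdot)$ and $\omega(\cdot,y_{j+1/2}^-)$. So the problem decouples into independent finite-dimensional linear problems on each cell, and I will work cell by cell with $\bm\beta=(1,1)$, as in this subsection. Taking $v\equiv 1$ in \eqref{eq:0702-1} gives $\ipproj{\omega}{1}_{i,j}=0$ for every $\omega$. The equation tested with constants is therefore exactly the compatibility condition $\ip{F}{1}_{i,j}=0$, which holds by hypothesis.

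Let $N=\dim\mathbb{P}^k$. The unknowns are $N$ coefficients of $\omega$. The equations are the $N-1$ independent conditions $\ipproj{\omega}{v}_{i,j}=\ip{F}{v}_{i,j}$ for $v$ in a complement of the constants, plus the mean-zero constraint. This is a square system, so existence follows from uniqueness. It therefore suffices to show that if $\omega\in\mathbb{P}^k(I_{i,j})$ satisfies $\ip{\omega}{1}_{i,j}=0$ and $\ipproj{\omega}{v}_{i,j}=0$ for all $v$, then $\omega=0$.

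\emph{Step 1 (energy identity).} Test with $v=\omega$. Write $\omega_L,\omega_R$ for the traces at $x_{i-1/2}^+$ and $x_{i+1/2}^-$, and $\omega_B,\omega_T$ for those at $y_{j-1/2}^+$ and $y_{j+1/2}^-$. Then $-\ip{\omega}{\omega_x}_{i,j}=-\tfrac12\int(\omega_R^2-\omega_L^2)\,\mathrm{d}y$. Combining this with the edge term $\int\omega_R(\omega_R-\omega_L)\,\mathrm{d}y$, and doing the same in $y$, I expect
\[
\ipproj{\omega}{\omega}_{i,j}=\tfrac12\int_{y_{j-1/2}}^{y_{j+1/2}}(\omega_R-\omega_L)^2\,\mathrm{d}y+\tfrac12\int_{x_{i-1/2}}^{x_{i+1/2}}(\omega_T-\omega_B)^2\,\mathrm{d}x .
\]
Hence $\ipproj{\omega}{\omega}_{i,j}=0$ forces $\omega_R=\omega_L$ and $\omega_T=\omega_B$.

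\emph{Step 2 (integrated-by-parts form).} Integrate by parts in the volume term of \eqref{eq:0702-1}. This gives
\[
\ipproj{\omega}{v}_{i,j}=\ip{\omega_x+\omega_y}{v}_{i,j}+\int(\omega_L-\omega_R)v_L\,\mathrm{d}y+\int(\omega_B-\omega_T)v_B\,\mathrm{d}x .
\]
By Step 1 the edge terms vanish, so $\ip{\omega_x+\omega_y}{v}_{i,j}=0$ for all $v\in\mathbb{P}^k$. Since $\omega_x+\omega_y\in\mathbb{P}^k$, it must vanish, and so $\omega(x,y)=g(x-y)$ for a polynomial $g$ of one variable. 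The condition $\omega_R=\omega_L$ then reads $g(s+h_x)=g(s)$ for all $s$ in an interval of positive length. The polynomial $g(\cdot+h_x)-g(\cdot)$ vanishes on that interval, so it vanishes identically. A polynomial periodic on all of $\mathbb{R}$ is constant, so $\omega$ is constant, and the zero mean gives $\omega=0$.

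\emph{Step 3 (the $L^\infty$ bound by scaling).} Map $I_{i,j}$ affinely to the reference square $\hat K$. On a uniform Cartesian mesh with fixed ratio $h_x/h_y$, the form scales as $\ipproj{\omega}{v}_{i,j}=h\,\hat{\mathbb{P}}(\hat\omega,\hat v)$, with $\hat{\mathbb{P}}$ independent of $h$ up to bounded aspect-ratio dependence. The right-hand side scales as $\ip{F}{v}_{i,j}\simeq h^2(\hat F,\hat v)_{\hat K}$. By Steps 1--2 the reference solution map is an injective linear map on a finite-dimensional space, so it is bounded. This gives $\|\hat\omega\|_{L^\infty(\hat K)}\le C h\|\hat F\|_{L^2(\hat K)}=C h\cdot h^{-1}\|F\|_{i,j}$, which is the claimed bound with $C$ depending only on $k$ and the aspect ratio.

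I expect the main obstacle to be Step 1: getting the signs of the four trace terms to combine exactly into perfect squares. That combination is what makes the argument close, so I would check it first. For a general constant $\bm\beta$ with $\beta_i>0$, the same computation should carry through with weights $\beta_1,\beta_2$ and characteristic lines $g(\beta_2x-\beta_1y)$.
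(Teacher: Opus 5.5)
Your proposal is correct and follows essentially the same route as the paper. Both arguments show that the homogeneous cell problem has only the trivial solution: the traces are periodic across the cell, so $\omega=g(x-y)$, periodicity forces $g$ to be constant, and the zero mean gives $\omega=0$. Both then obtain the $L^\infty$ bound by an affine scaling to a reference cell, where the inverse is bounded. The one difference is detail. Your energy identity $\ipproj{\omega}{\omega}_{i,j}=\tfrac12\int(\omega_R-\omega_L)^2\,\mathrm{d}y+\tfrac12\int(\omega_T-\omega_B)^2\,\mathrm{d}x$ and your integrated-by-parts form explicitly justify the trace periodicity and $\bm\beta\cdot\nabla\omega=0$. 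The paper only asserts these two facts, citing \cite{liu2020optimal}.
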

	
\begin{proof}
	The well-posedness of this problem was originally established in \cite[Lemma 2.1]{liu2020optimal}. Therefore, we only present a sketch of the proof here. Using an affine transformation, it suffices to consider the reference cell $[-1,1]^2$. We select the standard Legendre polynomial basis $\{v_q\}$. The projection is equivalent to a linear system involving the matrix $A_{[-1,1]^2}$, defined by
	\[
	A_{\cdot,0} = [1,0,\dots,0]^T\text{ and } [A_{[-1,1]^2}]_{p,q} = \ipproj{v_p}{v_q}_{[-1,1]^2}, \quad p,q > 0.
	\]
	Following \cite[Lemma 2.1]{liu2020optimal}, we consider a function $g = \sum b_p v_p$ in the kernel. The condition $\ipproj{g}{v}=0$ implies that $g$ is periodic and satisfies $\bm{\beta}\cdot \nabla g = 0$ , hence $g$ admits the representation $g(x,y) = p(x - y)$ for some $p \in \mathbb{P}^k([-1,1]).$ 
	The periodic boundary condition further implies the periodicity of $p$, satisfying $p(x-1) = p(x+1)$ and $p(1-y) = p(-1-y)$. Since $p$ is a polynomial, it therefore must be constant. Moreover, due to the zero-mean constraint on $g$ (inherited from $\omega$), we must have $p \equiv 0$, and hence $g \equiv 0$. This implies the coefficient vector $b$ is zero, proving that $A_{[-1,1]^2}$ is invertible. Consequently, since the operator is defined on a finite-dimensional space, the norm of the inverse is bounded, i.e., $\|A_{[-1,1]^2}^{-1}\|_{\ell^\infty} \leq C$.
	
	Finally, we establish stability on an arbitrary cell $C_{i,j}$. Applying the affine mapping to $[-1,1]^2$ yields the linear system:
$\hat{A}_{[-1,1]^2}^T b(\omega) = f$, where $f_p = \frac{1}{h}\ip{F}{v_p}_{i,j}.$ 
	Here, $\hat{A}_{[-1,1]^2}$ is a scaled version of $A_{[-1,1]^2}$ with scaling factors depending only on the mesh ratios $h_x/h$ and $h_y/h$. Therefore, $\hat{A}_{[-1,1]^2}$ maintains the invertibility and uniform boundedness of its inverse. We deduce the following uniform bound:
	\[
	\|\omega\|_{L^\infty(I_{i,j})} \leq \|b(\omega)\|_{\ell^\infty} \leq \|\hat{A}_{[-1,1]^2}^{-1}\|_{\ell^\infty}\|f\|_{\ell^\infty} \leq C\|F\|_{i,j}.
	\]
	This completes the proof.
\end{proof}

	Under Definition \ref{def:2Dprojection}, $\Pe $ also has certain approximation  and  superconvergence properties, as shown below.
	
	\begin{remark}
		The above 2D projection $\Pe $ is chosen because we observe that it preserves both cell averages and outflow edge averages. This property will be crucial in our analysis, as it guarantees certain constant-test-function terms vanish exactly (cf. identity \eqref{eq:0721-4}).
	\end{remark}

	\begin{lemma}\label{lmm:2D-proj-property}
		For $\Pe $ defined by \eqref{eq:2Dprojection}, consider sufficiently smooth $\omega \in H^{k+1}(\Omega)$. Then there exists constant $C>0$ independent of the choice of $\omega$ such that
		\begin{align*}
				\nm{\omega - \Pe  (\omega) }_{i,j} & \leq Ch^{m} \nm{\omega}_{H^{m}(I_{i,j})}\quad m=1,\dots,k+1,\ \ \forall i,j,
				\\
				\left| H\left( \omega - \Pe  (\omega), v \right) \right| & \leq Ch^{k+1} \nm{\omega}_{H^{k+1}\left(\Omega\right)}  \nm{v} \quad \forall v \in \mathbb{V}_h^k.
		\end{align*}
	\end{lemma}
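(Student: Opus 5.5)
The two estimates in \Cref{lmm:2D-proj-property} will be handled separately: the approximation bound by a Bramble--Hilbert argument built on the local stability in \Cref{prop:welldefiness}, and the bound on $H$ by rewriting $H(\cdot,v)$ in terms of $\ipproj{\cdot}{v}$, so that only a small remainder survives.

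\textbf{Approximation.} First, $\Pe$ reproduces $\mathbb{P}^k(I_{i,j})$ cellwise. If $\omega|_{I_{i,j}}\in\mathbb{P}^k$, then $\omega$ itself satisfies \eqref{eq:2Dprojection} on that cell, and uniqueness from \Cref{prop:welldefiness} (applied to the difference) gives $\Pe\omega=\omega$. The projection is not purely local, since the edge integrals in \eqref{eq:0702-1} use only traces of $\omega$ from inside the cell ($y_{j+1/2}^-$, $x_{i+1/2}^-$). Granting that, $\Pe\omega|_{I_{i,j}}$ depends only on $\omega|_{I_{i,j}}$, and I would establish the local stability
\[
\nm{\Pe\omega}_{L^\infty(I_{i,j})}\le C\bigl(h^{-1}\nm{\omega}_{i,j}+\nm{\omega}_{L^\infty(I_{i,j})}\bigr).
\]
To do this, write $\Pe\omega=\bar\omega_{i,j}+w$ with $w$ mean-free. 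Then $\ipproj{w}{v}_{i,j}=\ipproj{\omega}{v}_{i,j}-\bar\omega_{i,j}\ipproj{1}{v}_{i,j}$. Here $\ipproj{1}{v}_{i,j}=0$, because $-\int\bm\beta\cdot\nabla v$ cancels the boundary differences when $\bm\beta=(1,1)$. The right-hand side is a linear functional of $v$ vanishing on constants, bounded via trace and inverse estimates. Representing it as $\ip{F}{v}_{i,j}$ with $F$ mean-free and $\nm{F}_{i,j}\le C(\nm{\omega}_{L^\infty}+h^{-1}\nm{\omega}_{i,j})$ on the scaled cell, \Cref{prop:welldefiness} gives the bound on $w$. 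Then take $q\in\mathbb{P}^{m-1}$ as the Taylor/averaged polynomial of $\omega$ on $I_{i,j}$. Write $\omega-\Pe\omega=(\omega-q)-\Pe(\omega-q)$, combine the stability estimate with Bramble--Hilbert and the Sobolev embedding $H^2\subset L^\infty$ on the scaled cell ($m\ge2$; for $m=1$ use the trace-based form of the functional bound), and obtain $Ch^m\nm{\omega}_{H^m(I_{i,j})}$.

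\textbf{Bound on $H$.} Let $\eta=\omega-\Pe\omega$. Comparing the definition of $H_{i,j}$ (with $\bm\beta=(1,1)$) with \eqref{eq:0702-1}, $H_{i,j}(\eta,v)$ differs from $-\ipproj{\eta}{v}_{i,j}$ only in the inflow terms. $H$ uses the upwind traces $\eta(x_{i-1/2}^-,y)$, i.e. the outflow trace of the left neighbour, whereas $\ipproj{\cdot}{\cdot}$ uses $\eta(x_{i+1/2}^-,y)$ paired with $v(x_{i-1/2}^+,y)$. After summing over all cells and reindexing with periodicity, I expect
\[
H(\eta,v)=-\sum_{i,j}\ipproj{\eta}{v}_{i,j}+\sum_{i,j}\int_{y_{j-1/2}}^{y_{j+1/2}}\eta(x_{i+1/2}^-,y)\,\jump{v}_{i+1/2}\,\mathrm{d}y+(\text{analogous }y\text{-edge terms}),
\]
with the jump sign fixed by the reindexing. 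The first sum vanishes by \eqref{eq:2Dprojection}. For the edge remainders, the trace inequality gives $\nm{\eta}_{L^2(e)}\le Ch^{k+1/2}\nm{\omega}_{H^{k+1}}$ (approximation bound plus inverse estimate on $\eta-(\omega-\Pi\omega)$), and $\nm{\jump{v}}_{L^2(e)}\le Ch^{-1/2}\nm{v}$. Together these give $Ch^{k+1}\nm{\omega}_{H^{k+1}}\nm{v}$.

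\textbf{Main obstacle.} The hard step is the local stability. \Cref{prop:welldefiness} only treats mean-free data, so the cell-average constraint and the $\ipproj{}{}$ equations must be decoupled carefully. This relies on the identity $\ipproj{1}{v}_{i,j}=0$, and for general constant $\bm\beta$ the weights must match that identity. The Sobolev embedding also has to be handled at low regularity $m=1$.
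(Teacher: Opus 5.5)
Your approximation argument is sound. $\Pe$ is in fact purely local; you wrote ``not purely local'' but then use the opposite. The mean-free splitting with $\ipproj{1}{v}_{i,j}=0$, \Cref{prop:welldefiness}, and Bramble--Hilbert (in trace form for $m=1$) then give the first estimate.

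\textbf{The gap is in the bound on $H$.} Your remainder is misidentified: after reindexing, the factor multiplying $\eta(x_{i+\frac12}^-,y)$ is $v(x_{i+\frac12}^+,y)-v(x_{i-\frac12}^+,y)$, not $\jump{v}$. The decisive problem, though, is the power count. Multiplying $\|\eta\|_{L^2(e)}\le Ch^{k+\frac12}\|\omega\|_{H^{k+1}}$ by $\|v\|_{L^2(e)}\le Ch^{-\frac12}\|v\|$ gives $Ch^{k}\|\omega\|_{H^{k+1}}\|v\|$, not $Ch^{k+1}$. That is only the trivial bound: without any orthogonality, the volume and flux terms of $H(\eta,v)$ are already $O(h^k)\|v\|$. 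Estimating each edge trace of $\eta$ separately can never produce the extra power.

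\textbf{The missing idea is to keep the difference on $\eta$ rather than on $v$.} Cellwise one has
\[
H_{i,j}(\eta,v)+\ipproj{\eta}{v}_{i,j}=-\int_{y_{j-\frac12}}^{y_{j+\frac12}}\Delta_x\eta\bigl(x_{i-\frac12}^-,y\bigr)\,v\bigl(x_{i-\frac12}^+,y\bigr)\,\mathrm{d}y-\int_{x_{i-\frac12}}^{x_{i+\frac12}}\Delta^y\eta\bigl(x,y_{j-\frac12}^-\bigr)\,v\bigl(x,y_{j-\frac12}^+\bigr)\,\mathrm{d}x ,
\]
which is \eqref{equ:diffstru}, and the $\mathbb{P}_h^*$ term vanishes by \eqref{eq:2Dprojection}. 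On the uniform periodic mesh $\Pe$ commutes with the shift, so $\Delta_x\eta=\Delta_x\omega-\Pe(\Delta_x\omega)$. Your trace estimate applied to $\Delta_x\omega$ then gives $\|\Delta_x\eta\|_{L^2(e)}\le Ch^{k+\frac12}|\Delta_x\omega|_{H^{k+1}}$, with $\|\Delta_x\omega\|_{H^{k+1}(\Omega)}\le h_x\|\partial_x\omega\|_{H^{k+1}(\Omega)}$; $\Delta^y$ is handled the same way. This yields $|H(\eta,v)|\le Ch^{k+1}\|\omega\|_{H^{k+2}(\Omega)}\|v\|$.

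\textbf{The extra derivative cannot be avoided.} Take $k=1$, $h_x=h_y$ with $1/h_x$ even on $[0,1]^2$. Let $\omega=h_x^2\sin(\pi x/h_x)\sin(\pi y/h_x)$, so $\|\omega\|_{H^2}=O(1)$. Let $v|_{I_{i,j}}=\pm\frac{2}{h_y}\bigl(y-\frac{y_{j-1/2}+y_{j+1/2}}{2}\bigr)$, with checkerboard signs chosen so that all cell contributions agree. Here $\Pe$ is determined by matching the cell mean and both outflow-edge means, so the $y$-edge terms vanish. A direct computation gives $|H(\eta,v)|=\frac{8}{3\pi^2}h_x$ while $\|v\|=1/\sqrt3$.

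So the second estimate must carry $\|\omega\|_{H^{k+2}(\Omega)}$, not $\|\omega\|_{H^{k+1}(\Omega)}$. This is consistent with the $H^{k+2}$ and $H^{k+3}$ regularity the paper assumes in 2D in \Cref{prop:opterror} and \Cref{prop:deltaerr}, respectively. Your argument only appears to prove the $H^{k+1}$ version because of the arithmetic slip.
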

	
	Unlike the 1D case, in this case, it is possible that $H_{i,j}\left( \Pe  (\omega),v \right) \neq H_{i,j}\left( \omega,v \right)$. However, by selecting $v=\frac{2}{h_x}\left(x-\frac{x_{i-\frac{1}{2}}+x_{i+\frac{1}{2}}}{2}  \right)$ and $v=\frac{2}{h_y}\left(y-\frac{y_{j-\frac{1}{2}}+y_{j+\frac{1}{2}}}{2}  \right)$ in \eqref{eq:2Dprojection}, we 
	{\bf have a crucial observation} stated in the following proposition.
	
	\begin{proposition}[Structure-preserving property of projection operator] \label{prop:structurepreserve_proj}
		The projection $\Pe $ preserves cell averages. Moreover, it also 
		preserves the outflow-edge averages of $\omega$ in the sense that
		\begin{align}
			\int_{y_{j-1/2}}^{y_{j+1/2}} (\Pe \omega)(x_{i+1/2}^{-}, y) \, \mathrm{d}y &= \int_{y_{j-1/2}}^{y_{j+1/2}} \omega(x_{i+1/2}^{-}, y) \, \mathrm{d}y,
			\label{equ:up1}\\
			\int_{x_{i-1/2}}^{x_{i+1/2}} (\Pe \omega)(x, y_{j+1/2}^{-}) \, \mathrm{d}x &= \int_{x_{i-1/2}}^{x_{i+1/2}} \omega(x, y_{j+1/2}^{-}) \, \mathrm{d}x.\label{equ:up2}	
		\end{align}
		As a consequence,  
		\begin{equation}\label{eq:0721-4}
			H_{i,j}(\Pe \omega,1) = H_{i,j}(\omega,1)\quad \forall i,j.
		\end{equation}
	\end{proposition}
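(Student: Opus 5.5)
Cell-average preservation is immediate: it is exactly the second condition in \eqref{eq:2Dprojection}, so $\ip{\Pe\omega-\omega}{1}_{i,j}=0$ for all $i,j$.

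For the outflow-edge averages I will use the first condition of \eqref{eq:2Dprojection}, $\ipproj{\Pe\omega-\omega}{v}_{i,j}=0$, with the two linear test functions suggested in the text. Write $\theta:=\Pe\omega-\omega$ and take $\bm\beta=(1,1)$ as in the analysis.

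For $v=\frac{2}{h_x}\bigl(x-x_i\bigr)$, where $x_i$ is the cell centre, the three terms of \eqref{eq:0702-1} evaluate as follows.
\begin{itemize}
\item $\bm\beta\cdot\nabla v=\partial_x v=2/h_x$ is constant, so the volume term is $-\frac{2}{h_x}\ip{\theta}{1}_{i,j}$, which vanishes by cell-average preservation.
\item $v$ does not depend on $y$, so $v(x,y_{j+1/2}^-)-v(x,y_{j-1/2}^+)=0$ and the horizontal-edge integral vanishes.
\item $v(x_{i+1/2}^-,y)-v(x_{i-1/2}^+,y)=1-(-1)=2$, so the last term equals $2\int_{y_{j-1/2}}^{y_{j+1/2}}\theta(x_{i+1/2}^-,y)\,\mathrm{d}y$.
\end{itemize}
Hence $\ipproj{\theta}{v}_{i,j}=0$ gives \eqref{equ:up1}. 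Choosing $v=\frac{2}{h_y}(y-y_j)$ gives \eqref{equ:up2} by the symmetric computation. For a general constant $\bm\beta$ with $\beta_i>0$, the volume term is still a constant times the cell integral, so the argument is unchanged.

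For \eqref{eq:0721-4}, take $v\equiv1$ in $H_{i,j}$. The volume term drops because $\nabla v=0$. What remains is
\[
H_{i,j}(\omega,1)=\int_{y_{j-1/2}}^{y_{j+1/2}}\bigl[\omega(x_{i-1/2}^-,y)-\omega(x_{i+1/2}^-,y)\bigr]\mathrm{d}y+\int_{x_{i-1/2}}^{x_{i+1/2}}\bigl[\omega(x,y_{j-1/2}^-)-\omega(x,y_{j+1/2}^-)\bigr]\mathrm{d}x .
\]
Each of these four integrals is an outflow-edge average of $\omega$, either of cell $I_{i,j}$ or of its left neighbour $I_{i-1,j}$ or lower neighbour $I_{i,j-1}$. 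By \eqref{equ:up1}--\eqref{equ:up2}, applied at the indices $i-1$ and $j-1$ as well, each integral is unchanged when $\omega$ is replaced by $\Pe\omega$. This proves $H_{i,j}(\Pe\omega,1)=H_{i,j}(\omega,1)$.

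The only point needing care is the volume term. It disappears only because $\bm\beta\cdot\nabla v$ is constant for linear $v$, and then only thanks to cell-average preservation.
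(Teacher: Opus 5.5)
Your proof is correct and follows the paper's route: you test the first condition of \eqref{eq:2Dprojection} with the two cellwise linear functions, use cell-average preservation to kill the volume term, and then read off \eqref{eq:0721-4} from the four edge integrals of $I_{i,j}$ and of its left and lower neighbours (which exist by periodicity). The one assumption you use without stating it is $k\ge 1$, which these linear test functions need in order to lie in $\mathbb{P}^k(I_{i,j})$; it is guaranteed in 2D by $k\ge d/2$, and the edge-average identity genuinely fails for $k=0$, where $\Pe$ reduces to the cell average.
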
 
	
%
%
%
	
	\subsubsection{Correction functions} 
	We now proceed with Step 2 of our proof sketch: the construction of the correction functions 
	$\left\{ Z^{n,m} \right\}$. These functions are the central analytical tool of our proof, designed to cancel the low-order error terms that obstruct a direct superconvergence estimate. 
	This construction yields the corrected source terms $\left\{ \widetilde{\mathcal{S}}^{n,l+1} \right\}$ while satisfying $\nm{Z^{n,m}} \leq Ch^{k+2}$. Each $Z^{n,m}$ consists of two parts
	\begin{equation}
		Z^{n,m} = Z^{n,m}_1 + Z^{n,m}_2,
	\end{equation}
	where $Z^{n,m}_1,Z^{n,m}_2$ are defined as follows. 
	
	\begin{definition}[Correction functions for 1D problem]\label{def:1Dcorrectfun}
		For $d=1$, we let
		\begin{equation*}
			V_{i}^{n,m}:= \sum_{p=0}^{k} \delta_{I_i}^p\left(u_h^{n,m}\right) \, \left( \Pe  U^{n,m} - P^{(p-1)}\Pe  U^{n,m} \right),
		\end{equation*}
		then $Z^{n,m}_1, Z^{n,m}_2$ are DG polynomials in $\mathbb{V}_h^k$ which satisfy
		\begin{subequations}
			\begin{equation*}
				Z^{n,m}_1\left( x_{i+\frac{1}{2}}^- \right) = Z^{n,m}_2\left( x_{i+\frac{1}{2}}^- \right) =0,
			\end{equation*}
			\begin{equation*}
				\ipproj{Z^{n,m}_1}{v}_i = \ip{ V_i^{n,m}}{v}_i\quad \forall v \in \mathbb{P}^{k}(I_i),
			\end{equation*}
			\begin{equation*}
				\ipproj{Z^{n,m}_2}{v}_i = \ip{\frac{\mathrm{d} U^{n,m}}{\mathrm{d}x}-\Pe \frac{\mathrm{d} U^{n,m}}{\mathrm{d}x}}{v}_i + H_i\left( \eta^{n,m},v \right)\quad \forall v \in \mathbb{P}^{k}(I_i),
			\end{equation*}
		\end{subequations}
		for all $i$.
	\end{definition}
	
	\begin{remark}[Roles of the correction function components]
		The correction function $Z^{n,m}$ is composed of two distinct parts, each targeting a different source of low-order error. The component $Z_{2}^{n,m}$ is analogous to the correction functions used in the superconvergence analysis of standard linear RKDG schemes \cite{xu2020superconvergence}, designed to cancel errors arising from the numerical flux and the projection of the derivative. The component $Z_{1}^{n,m}$ is tailored specifically to counteract the new, nonlinear error terms generated by the OE procedure. It is motivated by the superconvergence analysis for the 1D semi-discrete OFDG method in {\cite{lu2021oscillation}}. 
		This decomposition allows us to isolate and systematically eliminate the novel analytical difficulties posed by the OEDG formulation.
	\end{remark}

	Following  {\cite{lu2021oscillation}} and \cite{xu2020superconvergence}, one has the explicit expressions for $Z_1^{n,m}$ and $Z_2^{n,m}$, as summarized in Lemma \ref{lmm:1Dcorrectfun-expression}.

	\begin{lemma}\label{lmm:1Dcorrectfun-expression}
		For 1D case, define the elementwise antiderivative operator $D^{-1}:\mathbb{V}_h^{k} \to \mathbb{V}_h^{k+1}$ as
		\begin{equation*}
			\left( D^{-1}\omega \right) \big|_{I_i} (x) = \int_{x_{i-\frac{1}{2}}}^{x} \omega(s)\, \mathrm{d}s\quad \forall i.
		\end{equation*}
		Under Definition \ref{def:1Dcorrectfun}, if we set $V^{n,m}$ to be the DG polynomial such that
		\begin{equation}\label{eq:0710-2}
			V^{n,m} \big|_{I_i} = V_i^{n,m}\quad \forall i,
		\end{equation}
		then $Z_1^{n,m}$ and $Z_2^{n,m}$ can be expressed explicitly as
		\begin{equation*}
			Z_1^{n,m} = -\Pe D^{-1}V^{n,m},\ \ Z_2^{n,m}=\Pe D^{-1}P^{(k)}\left( \frac{\mathrm{d} U^{n,m}}{\mathrm{d}x}-\Pe \frac{\mathrm{d} U^{n,m}}{\mathrm{d}x}  \right).
		\end{equation*}
	\end{lemma}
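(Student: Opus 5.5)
The plan is to show that each candidate in $\mathbb{V}_h^k$ satisfies the two defining conditions of Definition~\ref{def:1Dcorrectfun}, and then to invoke uniqueness. Since $\ipproj{\cdot}{\cdot}_i$ only involves $I_i$, everything is cell-local.

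\textbf{Step 1: uniqueness.} Fix a cell $I_i$. Suppose $Z\in\mathbb{P}^k(I_i)$ satisfies $-\ip{Z}{v_x}_i=0$ for all $v\in\mathbb{P}^k(I_i)$ and $Z(x_{i+\frac12}^-)=0$. Since $v_x$ ranges over all of $\mathbb{P}^{k-1}(I_i)$, $Z$ is orthogonal to $\mathbb{P}^{k-1}(I_i)$. Hence $Z$ is a multiple of the mapped Legendre polynomial $L_k$. Because $L_k(1)=1\neq 0$, the endpoint condition forces $Z\equiv0$. The defining linear system is square, so it is uniquely solvable. This is the same unisolvence that makes the right Gauss--Radau projection of Definition~\ref{def:1Dprojection} well defined.

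\textbf{Step 2: the key identity.} Let $w\in\mathbb{V}_h^k$ have zero mean on every cell. Then $D^{-1}w$ vanishes at both endpoints $x_{i\pm\frac12}^{\mp}$. Applying $\Pe$:
\begin{itemize}
\item the Radau endpoint condition gives $(\Pe D^{-1}w)(x_{i+\frac12}^-)=0$;
\item the first condition of Definition~\ref{def:1Dprojection} gives $\ip{\Pe D^{-1}w}{v_x}_i=\ip{D^{-1}w}{v_x}_i$.
\end{itemize}
Integrating by parts, the boundary terms drop because $D^{-1}w$ vanishes at both ends, so
$$\ip{D^{-1}w}{v_x}_i=-\ip{w}{v}_i .$$
Therefore $\ipproj{\Pe D^{-1}w}{v}_i$ equals $\pm\ip{w}{v}_i$ for all $v\in\mathbb{P}^k(I_i)$. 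I will track the sign convention of $\ipproj{\cdot}{\cdot}_i$ carefully here, since this is exactly what fixes the sign in front of $\Pe D^{-1}$ in the two formulas.

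\textbf{Step 3: apply to $Z_1^{n,m}$.} Take $w=V^{n,m}$. Each summand $\Pe U^{n,m}-P^{(p-1)}\Pe U^{n,m}$ has zero cell mean, because $P^{(p-1)}$ reproduces constants (recall $P^{(-1)}:=P^{(0)}$). The coefficients $\delta_{I_i}^p$ are constants on the cell. Hence $V^{n,m}$ is admissible, and Step 2 combined with Step 1 gives $Z_1^{n,m}=-\Pe D^{-1}V^{n,m}$.

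\textbf{Step 4: apply to $Z_2^{n,m}$.} By the Gauss--Radau lemma, $H_i(\eta^{n,m},v)=0$, so the right-hand side reduces to $\ip{g}{v}_i$ with
$$g=\frac{\mathrm dU^{n,m}}{\mathrm dx}-\Pe\frac{\mathrm dU^{n,m}}{\mathrm dx}.$$
Because $v\in\mathbb{P}^k$, we may replace $g$ by $P^{(k)}g\in\mathbb{V}_h^k$. For $k\ge1$, testing Definition~\ref{def:1Dprojection} with $v=x$ shows that $\Pe$ preserves cell averages. Hence $P^{(k)}g$ has zero cell mean, and Steps 2 and 1 give $Z_2^{n,m}=\Pe D^{-1}P^{(k)}g$.

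\textbf{Main obstacle.} The only delicate points are the zero-mean checks, which guarantee that $D^{-1}w$ vanishes at the outflow endpoint, and consistent sign bookkeeping in Step 2. Everything else is the injectivity argument from Step 1.
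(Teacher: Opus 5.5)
Your strategy is the natural one: injectivity of the cellwise right Gauss--Radau-type problem, then direct verification of the candidates by integration by parts using zero cell means. The paper gives no argument and simply defers to \cite{lu2021oscillation,xu2020superconvergence}.

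There is, however, a genuine gap exactly where you wrote ``$\pm$'' and promised to track the sign. The sign in Step~2 is not ambiguous; it is $+$. The minus sign built into $\ipproj{\cdot}{\cdot}_i$ cancels the minus from integration by parts:
\[
\ipproj{\Pe D^{-1}w}{v}_i=-\ip{D^{-1}w}{v_x}_i=\ip{w}{v}_i\qquad\forall v\in\mathbb{P}^k(I_i).
\]
So for zero-mean $w$, the unique solution of $\ipproj{Z}{v}_i=\ip{w}{v}_i$ with $Z(x_{i+\frac12}^-)=0$ is $Z=+\Pe D^{-1}w$. Both $Z_1^{n,m}$ and $Z_2^{n,m}$ are characterized by precisely this problem: take $w=V^{n,m}$, respectively $w=P^{(k)}g$ after using $H_i(\eta^{n,m},v)=0$. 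A single identity cannot produce opposite signs in Steps~3 and~4.

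Your Step~4 is fine. Step~3, however, does not give $Z_1^{n,m}=-\Pe D^{-1}V^{n,m}$; it gives $Z_1^{n,m}=\Pe D^{-1}V^{n,m}$. A concrete check: take $k=1$ on the cell $[-1,1]$ with $V=x$. Then $D^{-1}V=(x^2-1)/2$ and $\Pe D^{-1}V=(x-1)/3$, and $\ipproj{(x-1)/3}{x}=2/3=\ip{x}{x}$. With $-\Pe D^{-1}V$ you would get $-2/3$ instead.

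In other words, the printed formula for $Z_1^{n,m}$ is inconsistent with Definition~\ref{def:1Dcorrectfun}. The minus sign would correspond to defining $Z_1^{n,m}$ through $\ipproj{Z_1^{n,m}}{v}_i=-\ip{V_i^{n,m}}{v}_i$. The sign in the Definition is the one the paper actually needs: the estimate of $\widetilde{\mathcal S}_4^{n,l+1}$ uses $\ipproj{(Z_1)_d^{n,l+1}}{v}=\ip{V_d^{n,l+1}}{v}$ to cancel the OE increment.

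The lemma is only used afterwards through norm bounds such as $\nm{(Z_1)_c^{n,l+1}}_i\le Ch\nm{V_c^{n,l+1}}_i$, so the discrepancy is harmless downstream. Still, your write-up should prove $Z_1^{n,m}=\Pe D^{-1}V^{n,m}$ and flag the sign, rather than assert the printed one.

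A minor point: the cell system is not literally square. The test function $v=1$ contributes the compatibility condition $\ip{w}{1}_i=0$, giving $k+2$ conditions for $k+1$ unknowns. Since you prove injectivity and then exhibit a solution, this does not affect the argument.
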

	
	Based on Definition \ref{def:1Dcorrectfun}, we define the correction functions for 2D problem as follows. 
	
	\begin{definition}[Correction functions for 2D problem]\label{def:2Dcorrectfun}
		For $d=2$, we set
		\begin{equation*}
			V_{i,j}^{n,m}:=\sum_{p=0}^{k} \delta_{I_{i,j}}^p\left(u_h^{n,m}\right) \, \left( \Pe  U^{n,m} - P^{(p-1)}\Pe  U^{n,m} \right),
		\end{equation*}
		then we define $Z^{n,m}_1, Z^{n,m}_2 \in \mathbb{V}_h^k$ by, for every $i,j$,
		\begin{subequations}
			\begin{equation*}
				\int_{I_{i,j}} Z_1^{n,m} \, \mathrm{d}x\mathrm{d}y = \int_{I_{i,j}} Z_2^{n,m} \, \mathrm{d}x\mathrm{d}y = 0,
			\end{equation*}
			\begin{equation*}
				\ipproj{Z^{n,m}_1}{v}_{i,j} = \ip{V_{i,j}^{n,m}}{v}_{i,j}\quad \forall v \in \mathbb{P}^{k}(I_{i,j}),
			\end{equation*}
			\begin{equation*}
				\ipproj{Z^{n,m}_2}{v}_{i,j} = \ip{{	 \bm{\beta}\cdot \nabla U^{n,m}}-\Pe \left({	 \bm{\beta}\cdot \nabla U^{n,m}} \right)}{v}_{i,j} + H_{i,j}\left( \eta^{n,m},v \right)\quad \forall v \in \mathbb{P}^{k}(I_{i,j}).
			\end{equation*}
		\end{subequations}
		.
	\end{definition}
	Due to \Cref{prop:welldefiness}, $Z^{n,m}_1$ and $Z^{n,m}_2$ in \Cref{def:2Dcorrectfun} are well-defined. 
	
	We finish this subsection by proving the following estimate for both the 1D and 2D correction functions.
	
	\begin{proposition}[Estimate for correction functions]\label{prop:est_correct}
		Under the CFL condition ${\frac{\tau}{h^{\kappa}}\leq C_{\textrm{CFL}}}$, $\left\{ Z^{n,m} \right\}$ with both Definition \ref{def:1Dcorrectfun} and \ref{def:2Dcorrectfun} satisfy that
		\begin{equation}
			\nm{Z^{n,m}} \leq \nm{Z_1^{n,m}} + \nm{Z_2^{n,m}} \leq C\left( h^{k+2} + \tau^r \right)\quad \forall n,m,
		\end{equation}
		Here constant $C>0$ is independent of $n$, $m$, $\tau$ and $h$.
		
	\end{proposition}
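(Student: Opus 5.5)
The proof treats $Z_1^{n,m}$ and $Z_2^{n,m}$ separately. In each case the strategy is to bound the data defining the correction function and then invoke a stability property of the inverse map $F\mapsto\omega$ defined by $\ipproj{\omega}{v}=\ip{F}{v}$.

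In 2D this inverse map is controlled by \Cref{prop:welldefiness}. Two facts are used:
\begin{itemize}
\item The right-hand sides have zero cell mean. For $Z_1$ this holds since $\ip{\Pe U-P^{(p-1)}\Pe U}{1}_{i,j}=0$. For $Z_2$, take $v=1$: $\ipproj{Z_2}{1}=0$, while the right-hand side equals $\ip{\bm\beta\cdot\nabla U-\Pe(\bm\beta\cdot\nabla U)}{1}+H_{i,j}(\eta,1)$. Both terms vanish, by cell-average and outflow-edge preservation (\Cref{prop:structurepreserve_proj}).
\item The solution is bounded cellwise: $\nm{\omega}_{i,j}\le Ch\,\nm{\omega}_{L^\infty(I_{i,j})}\le Ch\,\nm{F}_{i,j}$. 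The extra factor $h$ comes from the $h^{-1}$ scaling of $\ipproj{\cdot}{\cdot}$ seen in the reference-cell argument, i.e.\ $f_p=h^{-1}\ip{F}{v_p}$.
\end{itemize}

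In 1D the same bound $\nm{\omega}\le Ch\nm{F}$ follows from the explicit formula of \Cref{lmm:1Dcorrectfun-expression}: $D^{-1}$ gains a factor $h$ in $L^2$ (Cauchy–Schwarz on a cell), and $\Pe$ is $L^2$/$L^\infty$-stable on polynomials. Hence in both dimensions it suffices to show
\[
\nm{V^{n,m}}\le C(h^{k+1}+\tau^r)
\]
and that the $Z_2$ data has size $O(h^{k+1})$ in the dual sense.

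\emph{Bound on $Z_2$.} This is the standard linear part, as in \cite{xu2020superconvergence}. Lemma~\ref{lmm:2D-proj-property} gives $\nm{\bm\beta\cdot\nabla U-\Pe(\bm\beta\cdot\nabla U)}\le Ch^{k+1}\nm{U}_{H^{k+2}}$. The flux term $H_{i,j}(\eta,v)$ is a functional of $v$, not an $L^2$ function, so I will not treat it directly as $\ip{F}{v}$. Instead, I write it as $\ip{F_\eta}{v}$ with $F_\eta$ its Riesz representative in $\mathbb P^k(I_{i,j})$ and show $\nm{F_\eta}_{i,j}\le Ch^{k+1}\nm{U}_{H^{k+2}(I_{i,j})}$. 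This uses trace inequalities, $\nm{\eta}_{L^2(e)}\le Ch^{k+1/2}$, and the inverse bound $\nm{v}_{L^2(e)}\le Ch^{-1/2}\nm{v}$, applied cellwise and summed. The 1D case is even simpler: $H_i(\eta,v)=0$. In either case $\nm{Z_2^{n,m}}\le Ch^{k+2}$.

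\emph{Bound on $Z_1$.} Using \Cref{lmm:OEerr},
\[
\nm{V^{n,m}}^2\le C\sum_p\sum_K\bigl(\delta_K^p(u_h^{n,m})\bigr)^2\,\nm{\Pe U^{n,m}-P^{(p-1)}\Pe U^{n,m}}_{L^2(K)}^2 .
\]
The second factor is bounded by $\nm{\Pe U-P^{(0)}\Pe U}_{L^2(K)}\le Ch\,|K|^{1/2}\nm{U}_{W^{1,\infty}}\le Ch^{1+d/2}$, via the local approximation and $L^\infty$ stability of $\Pe$. Since this bound is uniform in $K$, we get
\[
\nm{V^{n,m}}^2\le Ch^{2+d}\sum_K(\delta_K^p)^2\le Ch^{2+d}\cdot h^{-2-d}(h^{2k+2}+\tau^{2r}),
\]
i.e.\ $\nm{V^{n,m}}\le C(h^{k+1}+\tau^r)$. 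Therefore $\nm{Z_1^{n,m}}\le Ch\,(h^{k+1}+\tau^r)\le C(h^{k+2}+\tau^r)$.

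The main obstacle I expect is this $Z_1$ step. The pairing of $\delta_K^p$ with the local factor is delicate: the crude $\ell^2$ bound from \Cref{lmm:OEerr} must be combined with an $L^\infty$-type local bound of exactly order $h^{1+d/2}$. The $L^\infty$ stability of $\Pe$ (from \cite{yang2012analysis,liu2020optimal}) is essential here to avoid losing powers of $h$. I will also need to verify that the $h$-gain in the inverse of $\ipproj{\cdot}{\cdot}$ holds uniformly on quasi-uniform 1D meshes.
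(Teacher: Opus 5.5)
Your overall route matches the paper's. You bound $Z_1$ and $Z_2$ separately through the cellwise inverse estimate $\nm{\omega}_{i,j}\le Ch\nm{F}_{i,j}$, which comes from Proposition~\ref{prop:welldefiness} in 2D and from the explicit $\Pe D^{-1}$ formula in 1D. You control $\nm{V^{n,m}}$ by \Cref{lmm:OEerr} times the uniform local factor $Ch^{1+d/2}$, and you represent the flux term by a Riesz representative $F_\eta$. Your $Z_1$ argument is correct. Your zero-mean compatibility checks are also correct, and the paper leaves them implicit.

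\textbf{The gap: the bound on $F_\eta$ in 2D.} You justify $\nm{F_\eta}\le Ch^{k+1}$ using only $\nm{\eta}_{i,j}\le Ch^{k+1}$, $\nm{\eta}_{L^2(e)}\le Ch^{k+1/2}$, and inverse/trace bounds on $v$. That yields only $O(h^{k})\nm{v}_{i,j}$. The volume term $\ip{\bm{\beta}\eta}{\nabla v}_{i,j}$ loses $h^{-1}$ through $\nm{\nabla v}_{i,j}\le Ch^{-1}\nm{v}_{i,j}$. The edge terms lose $h^{-1/2}$ through $\nm{v}_{L^2(e)}\le Ch^{-1/2}\nm{v}_{i,j}$, on top of $\nm{\eta}_{L^2(e)}=O(h^{k+1/2})$. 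The result is $\nm{Z_2^{n,m}}\le Ch^{k+1}$, which destroys the superconvergence. (In 1D there is no issue, since $H_i(\eta,v)=0$.)

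\textbf{The fix: use the superconvergence property of $\Pe$.} This is the second estimate in \Cref{lmm:2D-proj-property}, and it is what the paper invokes. By the definition of $\Pe$, $\ipproj{\eta}{v}_{i,j}=0$ for all $v\in\mathbb{P}^k(I_{i,j})$. By the identity \eqref{equ:diffstru}, $H_{i,j}(\eta,v)$ then reduces to edge integrals of $\Delta_x\eta(x_{i-\frac12}^-,\cdot)$ and $\Delta^y\eta(\cdot,y_{j-\frac12}^-)$ against traces of $v$; the volume term cancels. On the uniform mesh $\Pe$ commutes with shifts, so $\Delta_x\eta=\Delta_xU-\Pe\Delta_xU$. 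Its edge norm is $O(h^{k+1/2}\nm{\Delta_xU}_{H^{k+1}})=O(h^{k+3/2}\nm{U}_{H^{k+2}})$ on the patch $I_{i-1,j}\cup I_{i,j}$. Combined with $\nm{v}_{L^2(e)}\le Ch^{-1/2}\nm{v}_{i,j}$, this gives $\nm{F_\eta}\le Ch^{k+1}\nm{U}_{H^{k+2}(\Omega)}$.

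This is where the $H^{k+2}$ regularity you wrote actually enters. The local norm must be taken over a neighbouring patch, not over $I_{i,j}$ alone. With this replacement your proof goes through. Note also that the delicate step is this flux term, not the $Z_1$ step, which is routine once \Cref{lmm:OEerr} is available.
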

\begin{proof}
The bounds follow from the $L^\infty$ stability of $\Pe$, inverse inequalities on each cell, Conditions~1--2, and Lemma~\ref{lmm:OEerr}.
For any cell $K$, we have 
$$\|Z_1^{n,m}\|_K\leq Ch^{\frac{d}{2}}\|Z_1^{n,m}\|_{L^\infty(K)}\leq Ch\|V^{n,m}\|_K\leq Ch^{k+2}.$$
Also since there exists $F_{\eta^{n,m}}\in \mathbb{V}_h^k$ such that
$$(F_{\eta^{n,m}},v)_K = H_K(\eta^{n,m},v)\quad \forall v\in \mathbb{V}_h^k$$
the $L^\infty$ boundedness of $\tilde{P}$, we have
$$\|Z_2^{n,m}\|_K \leq Ch^{\frac{d}{2}}\|Z_2^{n,m}\|_{L^\infty}\leq Ch\|F_{\eta^{n,m}}\|_K\leq Ch^{k+2}\|U^{n,m}\|_{H^{k+2}(\Omega)}$$.
\end{proof}

	\subsubsection{Proof for Theorem \ref{thm:superconvergence}} \label{sec:sup_analysis}
	With the projection operator and correction functions in hand, we advance to Steps 3 and 4. We introduce the corrected error $\tilde{\zeta}_{\sigma}^{n,m} = \zeta_{\sigma}^{n,m} + Z^{n,m}$ and derive the evolution equation it satisfies. The core of the proof then lies in carefully estimating the resulting source terms to establish the desired $(k+2)$th-order bound.  
	Denote 
	\begin{equation}\label{eq:notation}
			\omega_c^{n,l+1} := \frac{1}{\tau} \left(\omega^{n,l+1} - \sum_{0 \leq m \leq l} c_{lm} \omega^{n,m}\right), \qquad
			\omega_d^{n,l+1} := \sum_{0 \leq m \leq l} d_{lm} \omega^{n,m},
	\end{equation}
	for $\left\{ \omega^{n,m} \right\}$. For $l=0,\dots,s-1$, we can find $\mathcal{S}^{n,l+1}_1, \mathcal{S}^{n,l+1}_2 \in \mathbb{V}_h^k$ such that
		\begin{align*}
			\ip{\mathcal{S}^{n,l+1}_1}{v} &= \ip{\eta_c^{n,l+1}}{v} - H\left( \eta_d^{n,l+1},v  \right)-\ip{\rho^{n,l+1}}{v},\\
			\ip{\mathcal{S}^{n,l+1}_2}{v} &= \ip{\frac{\mathcal{F}_{\hat\tau_{l+1}}u_h^{n,l+1} - u_h^{n,l+1}}{\tau}}{v},
		\end{align*}
	for all $v \in \mathbb{V}_h^k$. Then using \eqref{eq:notation0}, we observe from \eqref{eq:sd-RKOEDG-advec} and \eqref{eq:RKrefsol} that $\left\{ \zeta_{\sigma}^{n,m} \right\}$ satisfies
	\begin{subequations}\label{eq:0630-1}
		\begin{align}
			\zeta_{\sigma}^{n,0} & = \zeta_{\sigma}^{n} = u_{\sigma}^n-\Pe U^n, \\
			\ip{\zeta_{\sigma}^{n,l+1}}{v} & = \sum_{m=0}^l \big(  c_{lm}\ip{\zeta_{\sigma}^{n,m}}{v} + \tau d_{lm} H\left( \zeta_{\sigma}^{n,m}, v \right) \big) + \tau\ip{\mathcal{S}^{n,l+1}}{v}, \label{eq:0630-2} \\
			\zeta_{\sigma}^{n+1} & = \zeta_{\sigma}^{n,s} = u_\sigma^{n+1}-\Pe U^{n+1},        \end{align}
	\end{subequations}
	where the source term $\mathcal{S}^{n,l+1} = \mathcal{S}^{n,l+1}_1 + \mathcal{S}^{n,l+1}_2 \in \mathbb{V}_h^k$.
	
	By Definition \ref{def:1Dcorrectfun}, \ref{def:2Dcorrectfun}, the correction functions satisfies $Z^{n+1,0}=Z^{n,s}$ for each $n$. Now if we set $Z^n := Z^{n,0}$ and introduce $\left\{ Z^{n,m} \right\}$ into \eqref{eq:0630-1}, we obtain the following RKDG scheme for $\left\{ \widetilde{\zeta}_{\sigma}^{n,m}:=\zeta_{\sigma}^{n,m}+Z^{n,m} \right\}$.
		\begin{align*}
			\widetilde{\zeta}_{\sigma}^{n,0} & =\widetilde{\zeta}_{\sigma}^{n} = \zeta_{\sigma}^{n}+Z^n, \\
			\ip{\widetilde{\zeta}_{\sigma}^{n,l+1}}{v} & = \sum_{m=0}^l \big(  c_{lm}\ip{\widetilde{\zeta}_{\sigma}^{n,m}}{v} + \tau d_{lm} H\left( \widetilde{\zeta}_{\sigma}^{n,m}, v \right) \big) + \tau\ip{\widetilde{\mathcal{S}}^{n,l+1}}{v}, \\
			\widetilde{\zeta}_{\sigma}^{n+1} & = \widetilde{\zeta}_{\sigma}^{n,s} = \zeta_{\sigma}^{n+1}+Z^{n+1}.
		\end{align*}
	Here, the corrected source term $\widetilde{\mathcal{S}}^{n,l+1} \in \mathbb{V}_h^k$ can be decomposed into four parts
	\begin{equation*}
		\widetilde{\mathcal{S}}^{n,l+1}=\widetilde{\mathcal{S}}^{n,l+1}_1+\widetilde{\mathcal{S}}^{n,l+1}_2+\widetilde{\mathcal{S}}^{n,l+1}_3+\widetilde{\mathcal{S}}^{n,l+1}_4.
	\end{equation*}
	For all $v \in \mathbb{V}_h^k$, $\widetilde{\mathcal{S}}^{n,l+1}_1,\dots,\widetilde{\mathcal{S}}^{n,l+1}_4 \in \mathbb{V}_h^k$ satisfy
	\begin{subequations}\label{eq:0630-3}
		\begin{align}
			\ip{\widetilde{\mathcal{S}}^{n,l+1}_1}{v} & = \ip{\eta_c^{n,l+1}}{v} - H\left( \eta_d^{n,l+1},v  \right)-\ip{\rho^{n,l+1}}{v} + \ipproj{\left( Z_2 \right)_d^{n,l+1}}{v},\label{eq:0701-1} \\
			\ip{\widetilde{\mathcal{S}}^{n,l+1}_2}{v} & = - H\left( Z_d^{n,l+1},v \right) - \ipproj{Z_d^{n,l+1}}{v},\label{eq:0701-2} \\
			\ip{\widetilde{\mathcal{S}}^{n,l+1}_3}{v} & = \ip{Z_c^{n,l+1}}{v},\label{eq:0701-3} \\
			\ip{\widetilde{\mathcal{S}}^{n,l+1}_4}{v} & = \ip{\frac{\mathcal{F}_{\hat\tau_{l+1}}u_h^{n,l+1} - u_h^{n,l+1}}{\tau}}{v} + \ipproj{\left( Z_1 \right)_d^{n,l+1}}{v}.\label{eq:0701-4}
		\end{align}
	\end{subequations}
	In \eqref{eq:0630-3}, $\ipproj{\omega}{v}=\sum_{i} \ipproj{\omega}{v}_i$ for 1D problem and for 2D case, $\ipproj{\omega}{v} = \sum_{i,j} \ipproj{\omega}{v}_{i,j}$.
	
	To complete our proof, we only need to verify that
	\begin{equation*}
		\nm{\widetilde{\mathcal{S}}^{n,l+1}_1} + \cdots + \nm{\widetilde{\mathcal{S}}^{n,l+1}_4} \leq C\left(\nm{\widetilde{\zeta}_{\sigma}^{n}} + h^{k+2}+\tau^r \right).
	\end{equation*}
{	\begin{remark}[What breaks without RK alignment]\label{rem:why-align}
		The cancellation in the estimate of $\widetilde S_4^{n,\ell+1}$ hinges on the identity
		$\hat\tau_{\ell+1}/\tau=\sum_{m=0}^{\ell}d_{\ell m}$.
		If one uses a stage-independent OE pseudo-step $\hat\tau$ (as in the original OEDG formulation),
		then the factor $\hat\tau/\tau$ cannot match the cumulative RK weights $\sum_{m\le\ell}d_{\ell m}$,
		and a residual term of the form
		$\big(\hat\tau/\tau-\sum_{m\le\ell}d_{\ell m}\big)\sum_p\delta_I^p(\cdot)(\cdot)$ persists.
		This residual is precisely the low-order interface contribution that obstructs the analysis for the $(k+2)$ superconvergence.
	\end{remark}	}
	
	\textbf{Estimation for $\nm{\widetilde{\mathcal{S}}^{n,l+1}_1}$.}
	According to \eqref{eq:refsol} and \eqref{eq:lte},
	\begin{equation*}
		\rho^{n,l+1} = U_c^{n,l+1} + {{	 \bm{\beta}\cdot \nabla  U_d^{n,l+1}}}.
	\end{equation*}
	Therefore,
		\begin{align*}
			\ip{\widetilde{\mathcal{S}}^{n,l+1}_1}{v} & =\ip{\eta_c^{n,l+1}}{v}+\ip{{	 \bm{\beta}\cdot \nabla  U_d^{n,l+1}}-\Pe \left({	 \bm{\beta}\cdot \nabla  U_d^{n,l+1}} \right)}{v}-\ip{\rho^{n,l+1}}{v} \\
			& = \ip{\rho^{n,l+1}-\Pe \rho^{n,l+1}}{v}-\ip{\rho^{n,l+1}}{v} = -\ip{\Pe \rho^{n,l+1}}{v}.
		\end{align*}
	Using the $L^{\infty}$-stability of $\Pe $, we can obtain that
	\begin{equation*}
		\nm{\widetilde{\mathcal{S}}^{n,l+1}_1} \leq C\nm{\Pe \rho^{n,l+1}} \leq C\nm{\Pe \rho^{n,l+1}}_{L^\infty(\Omega)} \leq C\nm{\rho^{n,l+1}}_{L^\infty(\Omega)} \leq C\tau^r.
	\end{equation*}
	
	\textbf{Estimation for $\nm{\widetilde{\mathcal{S}}^{n,l+1}_2}$.} 
	When $d=1$, Definition \ref{def:1Dcorrectfun} yields that
	\begin{equation*}
		H_i\left( Z^{n,m}, v \right) = \ip{Z^{n,m}}{\frac{\mathrm{d}v}{\mathrm{d}x}}_i = - \ipproj{Z^{n,m}}{v}_i,
	\end{equation*}
	for every $i$ and $v \in \mathbb{V}_h^k$. Thus, in this case,
	\begin{equation*}
		\nm{\widetilde{\mathcal{S}}^{n,l+1}_2}^2 = - H\left( Z_d^{n,l+1},\widetilde{\mathcal{S}}^{n,l+1}_2 \right) - \ipproj{Z_d^{n,l+1}}{\widetilde{\mathcal{S}}^{n,l+1}_2} =0.
	\end{equation*}
	On the other hand, if $d=2$, we {\bf make a key observation} that, after the correction, the residual has a discrete-shifting structure: for $v \in \mathbb{P}^k\left( I_{i,j} \right)$,
	\begin{equation}\label{equ:diffstru}
		\begin{aligned}
			-H_{i,j}(\omega,v) - \ipproj{\omega}{v}_{i,j} & =  \int_{y_{j-\frac{1}{2}}}^{y_{j+\frac{1}{2}}} \Delta_x\omega\left( x_{i-\frac{1}{2}}^-,y \right)\, v\left( x_{i-\frac{1}{2}}^+,y \right) \, dy \\
			& + \int_{x_{i-\frac{1}{2}}}^{x_{i+\frac{1}{2}}} \Delta^y\omega\left( x,y_{j-\frac{1}{2}}^- \right)\, v\left( x,y_{j-\frac{1}{2}}^+ \right) \, dx.
		\end{aligned}
	\end{equation}
	By the standard inverse inequality $\|v\|_{L^\infty(I_{i,j})}\le C h^{-1}\|v\|_{i,j}$ for $v\in\mathbb{P}^k(I_{i,j})$, we then derive 
	\begin{align*}
		\nm{\widetilde{\mathcal{S}}^{n,l+1}_2}^2 & \leq Ch\sum_{i,j}\left( \nm{\Delta_x Z_d^{n,l+1}}_{L^\infty\left( I_{i,j} \right)} + \nm{\Delta^y Z_d^{n,l+1}}_{L^\infty\left( I_{i,j} \right)} \right)\nm{\widetilde{\mathcal{S}}^{n,l+1}_2}_{L^\infty\left( I_{i,j} \right)} \\
		& \leq C\sum_{i,j}\left( \nm{\Delta_x Z_d^{n,l+1}}_{L^\infty\left( I_{i,j} \right)} + \nm{\Delta^y Z_d^{n,l+1}}_{L^\infty\left( I_{i,j} \right)} \right)\nm{\widetilde{\mathcal{S}}^{n,l+1}_2}_{i,j} \\
		& \leq C \left( \sum_{i,j}\nm{\Delta_x Z_d^{n,l+1}}_{L^\infty\left( I_{i,j} \right)}^2 + \sum_{i,j}\nm{\Delta^y Z_d^{n,l+1}}_{L^\infty\left( I_{i,j} \right)}^2 \right)^\frac{1}{2}\nm{\widetilde{\mathcal{S}}^{n,l+1}_2}.
	\end{align*}
	Therefore, we obtain that
	\begin{equation}\label{eq:0704-1}
		\begin{aligned}
			\nm{\widetilde{\mathcal{S}}^{n,l+1}_2} & \leq C\left( \sum_{i,j}\nm{\Delta_x Z_d^{n,l+1}}_{L^\infty\left( I_{i,j} \right)}^2 + \sum_{i,j}\nm{\Delta^y Z_d^{n,l+1}}_{L^\infty\left( I_{i,j} \right)}^2 \right)^\frac{1}{2} \\
			& \leq C\sum_{m=0}^{l}\left( \sum_{i,j}\nm{\Delta_x Z^{n,m}}_{L^\infty\left( I_{i,j} \right)}^2 + \sum_{i,j}\nm{\Delta^y Z^{n,m}}_{L^\infty\left( I_{i,j} \right)}^2 \right)^\frac{1}{2}.
		\end{aligned}
	\end{equation}
	Observe that for $v \in \mathbb{P}^k\left(  I_{i,j} \right)$, $\mathbb{P}_h^*\big( Z^{n,m}( x+h_x, y ),v \big)_{i,j} = \mathbb{P}_h^*\big( Z^{n,m}, v( x-h_x, y ) \big)_{i+1,j}$.
	We can derive from Definition \ref{def:2Dcorrectfun} that
		\begin{align*}
			\ipproj{\Delta_x Z^{n,m}}{v}_{i,j} & =  - \ip{V_{i+1,j}^{n,m}(x+h_x,y)-V_{i,j}^{n,m}}{v}_{i,j}  \\
			+ & \ip{{{	 \bm{\beta}\cdot \nabla   \Delta_x U^{n,m}}}-\Pe \left({	 \bm{\beta}\cdot \nabla   \Delta_x U^{n,m}} \right)}{v}_{i,j} + H_{i,j}\left( \Delta_x\eta^{n,m},v \right). 
		\end{align*}
	At this point, the approximation property of $\Pe $  and difference quotient estimate for $\Delta_xU^{n,m}$ in \cite[Theorem~3, p.~292]{evans2022partial} yields that
		\begin{align*}
			\left\|{	 \bm{\beta}\cdot \nabla   \Delta_x U^{n,m}}  -\Pe \left({	 \bm{\beta}\cdot \nabla   \Delta_x U^{n,m}} \right) \right\|_{i,j} & \leq Ch^{k+1} \nm{\Delta_x U^{n,m}}_{H^{k+1}\left( I_{i,j} \right)}  \\ 
			&\leq Ch^{k+2}\nm{ U^{n,m}}_{H^{k+2}\left(\hat{I}_{i,j} \right)},
		\end{align*}
	where $\hat I_{i,j}$ denotes the union of $I_{i,j}$ and its neighboring cells. And take $F_{\Delta_x\eta} \in \mathbb{V}_h^k$ such that $\ip{F_{\Delta_x\eta}}{v}_{i,j}=H_{i,j}\left( \Delta_x\eta^{n,m},v \right)$ for all $i,j$, the superconvergence property of $\Pe $ gives
	\begin{equation*}
			\nm{F_{\Delta_x\eta}} \leq Ch^{k+1}\nm{\Delta_x U^{n,m}}_{H^{k+1}\left( \Omega \right)} \leq Ch^{k+2}\nm{ U^{n,m}}_{H^{k+2}(\Omega)}.
	\end{equation*}
	Using Proposition \ref{prop:welldefiness}, we further show
		\begin{align*}
			\sum_{i,j} \nm{\Delta_x Z^{n,m}}_{L^\infty\left( I_{i,j} \right)}^2 & \leq Ch^{2k+4} + C \sum_{i,j}\nm{V_{i+1,j}^{n,m}(x+h_x,y)-V_{i,j}^{n,m}}_{i,j}^2 \\
			 & \leq Ch^{2k+4} + C \sum_{i,j}\nm{V_{i+1,j}^{n,m}-V_{i,j}^{n,m}}_{i+1,j}^2 + C \sum_{i,j}\nm{\Delta_x V_{i,j}^{n,m}}_{i,j}^2 .
		\end{align*}
	Now, by the definition of $V_{i,j}^{m,n}$,
	\begin{equation}\label{eq:0708-2}
		\begin{aligned}
			& \sum_{i,j}\nm{V_{i+1,j}^{n,m}-V_{i,j}^{n,m}}_{i+1,j}^2 \\
			& \leq C\sum_{p=0}^{k}\sum_{i,j}\left( \delta_{I_{i+1,j}}^p\left( u_h^{n,m} \right) - \delta_{I_{i,j}}^p\left( u_h^{n,m} \right) \right)^2 \nm{\Pe  U^{n,m} - P^{(p-1)}\Pe  U^{n,m}}_{i+1,j}^2 \\
			& \leq C\sum_{p=0}^{k}\sum_{i,j}\left( \delta_{I_{i+1,j}}^p\left( u_h^{n,m} \right) - \delta_{I_{i,j}}^p\left( u_h^{n,m} \right) \right)^2\left( \nm{\eta^{n,m}}_{i+1,j} + \nm{U^{n,m}-P^{(0)}U^{n,m}}_{i+1,j} \right)^2 \\
			& \leq C\sum_{p=0}^{k}\sum_{i,j}\left( \delta_{I_{i+1,j}}^p\left( u_h^{n,m} \right) - \delta_{I_{i,j}}^p\left( u_h^{n,m} \right) \right)^2 \left( h \nm{U^{n,m}}_{H^{1}(I_{i+1,j})} \right)^2 \leq C\left( h^{2k+4} + \tau^{2r} \right).
		\end{aligned}
	\end{equation}
	The second to last inequality is derived by the observation that
	\begin{equation}\label{eq:0708-1}
		\nm{\eta^{n,m}}_{i,j} + \nm{U^{n,m}-P^{(0)}U^{n,m}}_{i,j} \leq Ch\nm{U^{n,m}}_{H^1(I_{i,j})} ,
	\end{equation}
	for every $i,j$. In the meantime, based on \eqref{eq:0708-1},
		\begin{align*}
			\sum_{i,j}\nm{\Delta_x V_{i,j}^{n,m}}_{i,j}^2 & \leq C\sum_{p=0}^{k}\sum_{i,j}\left(  \delta_{I_{i,j}}^p\left( u_h^{n,m} \right) \right)^2 \nm{\Delta_x \left(  \Pe  U^{n,m} \right) - P^{(p-1)}\left( \Delta_x \Pe  U^{n,m} \right)}_{i+1,j}^2 \\
			\leq C \sum_{p=0}^{k} & \sum_{i,j} \left(  \delta_{I_{i,j}}^p\left( u_h^{n,m} \right) \right)^2 \left( \nm{\Delta_x \eta^{n,m}}_{i,j} + \nm{\Delta_x U^{n,m} - P^{(0)}\left( \Delta_x U^{n,m} \right) }_{i,j} \right)^2 \\
			\leq C \sum_{p=0}^{k} & \sum_{i,j} \left(  \delta_{I_{i,j}}^p\left( u_h^{n,m} \right) \right)^2 \left( { h^2 \nm{U^{n,m}}_{H^{2}(\hat{I}_{i,j})} }\right)^2 \leq C\left( h^{2k+4} + \tau^{2r} \right).
		\end{align*}
	Therefore, we can conclude that
	\begin{equation*}
		\sum_{i,j} \nm{\Delta_x Z^{n,m}}_{L^\infty\left( I_{i,j} \right)}^2 \leq C\left( h^{2k+4} + \tau^{2r} \right),  
	\end{equation*}
	and similarly,
	\begin{equation*}
		\sum_{i,j} \nm{\Delta^y Z^{n,m}}_{L^\infty\left( I_{i,j} \right)}^2 \leq C\left( h^{2k+4} + \tau^{2r} \right).  
	\end{equation*}
	As a result, by \eqref{eq:0704-1},
	\begin{equation*}
		\nm{\widetilde{\mathcal{S}}^{n,l+1}_2} \leq C\left( h^{2k+4} + \tau^{2r} \right)^\frac{1}{2} \leq C\left( h^{k+2} + \tau^{r} \right).
	\end{equation*}

	\textbf{Estimation for $\nm{\widetilde{\mathcal{S}}^{n,l+1}_3}$.} 
	Our estimate of \(\nm{\widetilde{\mathcal{S}}^{n,l+1}_3}\) relies on the local Lipschitz-type condition \eqref{eq:lipschitz-typecontinuity2}, which yields
	\begin{equation}\label{eq:0709-1}
		\resizebox{0.99\hsize}{!}{$
			\begin{aligned}
				\sum_{K \in \mathcal{T}_h}
				\bigl(\delta_K^p(u_h^{n,l+1})-\delta_K^p(u_h^{n,m})\bigr)^2
				&\le Ch^{2k-d}\,\bigl(\|u_h^{n,l+1}-u_h^{n,m}\|_{L^{\infty}(\Omega)}^2+\nm{U^{n,l+1}-U^{n,m}}_{H^{k+1}\left( \Omega \right)}^2\bigr)
				\\
				&+ Ch^{-2-d} \nm{\zeta^{n,l+1}-\zeta^{n,m}}^2.
			\end{aligned}$}
	\end{equation}

	From \eqref{eq:0701-3}, we know that
	\begin{equation*}
		\nm{\widetilde{\mathcal{S}}^{n,l+1}_3} = \nm{Z_c^{n,l+1}} \leq \nm{\left( Z_1 \right)_c^{n,l+1}} + \nm{\left( Z_2 \right)_c^{n,l+1}}.
	\end{equation*}
	In 1D case ($d=1$), Lemma \ref{lmm:1Dcorrectfun-expression} indicates that
	\begin{subequations}
		\begin{align}
			\left( Z_1 \right)_c^{n,l+1} & = -\Pe D^{-1}V_c^{n,l+1}, \\
			\left( Z_2 \right)_c^{n,l+1} & = \Pe D^{-1}P^{(k)}\left( \frac{\mathrm{d} U_c^{n,l+1}}{\mathrm{d}x}-\Pe \frac{\mathrm{d} U_c^{n,l+1}}{\mathrm{d}x}  \right).
		\end{align}
	\end{subequations}
	By the definition of $D^{-1}$ and $P^{(k)}$, and the $L^\infty$-stability of $\Pe $, we can apply inverse inequalities and obtain that for every $i$,
		\begin{align*}
			\nm{\left( Z_2 \right)_c^{n,l+1}}_i & \leq Ch^{\frac{1}{2}}\nm{\left( Z_2 \right)_c^{n,l+1}}_{L^{\infty}\left( I_i \right)} \\
			& \leq Ch^\frac{3}{2} \nm{P^{(k)}\left( \frac{\mathrm{d} U_c^{n,l+1}}{\mathrm{d}x}-\Pe \frac{\mathrm{d} U_c^{n,l+1}}{\mathrm{d}x}  \right)}_{L^{\infty}\left( I_i \right)} \\
			& \leq Ch\nm{P^{(k)}\left( \frac{\mathrm{d} U_c^{n,l+1}}{\mathrm{d}x}-\Pe \frac{\mathrm{d} U_c^{n,l+1}}{\mathrm{d}x}  \right)}_i \\
			& \leq Ch\nm{\frac{\mathrm{d} U_c^{n,l+1}}{\mathrm{d}x}-\Pe \frac{\mathrm{d} U_c^{n,l+1}}{\mathrm{d}x}}_i \leq Ch^{k+2}\nm{U_c^{n,l+1}}_{H^{k+2}\left( I_i \right)}.
		\end{align*}
	Thus we have,
	\begin{equation*}
		\nm{\left( Z_2 \right)_c^{n,l+1}} \leq Ch^{k+2}\nm{U_c^{n,l+1}}_{H^{k+2}\left( \Omega \right)} \leq Ch^{k+2}\sum_{l=0}^{s-1}\frac{\nm{U^{n,l+1}-U^{n,0}}_{H^{k+2}\left( \Omega \right)}}{\tau}.
	\end{equation*}
	For $l\leq s-2$, combining \eqref{eq:0709-2} with the induction hypothesis yields
	\begin{equation}\label{eq:0721-2}
		\nm{U^{n,l+1}-U^{n,0}}_{H^{k+2}\left( \Omega \right)} \leq C\tau\sum_{m=0}^l\nm{U^{n,m}}_{H^{k+2} \left( \Omega \right)}\leq C\tau,
	\end{equation}
	and if $l=s-1$,
	\begin{equation}\label{eq:0721-3}
		\nm{U^{n,l+1}-U^{n,0}}_{H^{k+2}\left( \Omega \right)} = \nm{U^{n+1}-U^{n}}_{H^{k+2}\left( \Omega \right)} \leq C\tau.
	\end{equation}
	Therefore, $\nm{\left( Z_2 \right)_c^{n,l+1}} \leq Ch^{k+2}$.
	
	We can also use the arguments above and show that $\nm{\left( Z_1 \right)_c^{n,l+1}}_i \leq Ch\nm{V_c^{n,l+1}}_i$. By the procedure of \eqref{eq:0708-2},
		\begin{align*}
			\nm{V_c^{n,l+1}}_i & \leq C\sum_{p=0}^k\sum_{m=0}^{l}\frac{\left| \delta_{I_i}^p\left(u_h^{n,l+1}\right) - \delta_{I_i}^p\left(u_h^{n,m}\right) \right|}{\tau} \cdot \left( \nm{\eta^{n,m}}_i + \nm{U^{n,m}-P^{(0)}U^{n,m}}_i \right) \\
			& +  C\sum_{p=0}^k\left| \delta_{I_i}^p\left(u_h^{n,l+1}\right) \right| \cdot\left( \nm{\eta_c^{n,l+1}}_i + \nm{U_c^{n,l+1}-P^{(0)}U_c^{n,l+1}}_i \right) \\
			& \leq Ch^\frac{3}{2}\sum_{p=0}^k\left( \sum_{m=0}^{l}\frac{\left| \delta_{I_i}^p\left(u_h^{n,l+1}\right) - \delta_{I_i}^p\left(u_h^{n,m}\right) \right|}{\tau} + \left| \delta_{I_i}^p\left(u_h^{n,l+1}\right) \right|  \right).
		\end{align*}
	Then, combining inequality \eqref{eq:0709-1} with Lemmas \ref{lmm:OEerr} and \ref{lmm:deltaOEerr-1}, we obtain the following technical estimate 
	\begin{equation}\label{eq:WW1}
		\begin{aligned}
			\nm{\left( Z_1 \right)_c^{n,l+1}} & \leq C\sum_{m=0}^{l}\frac{h}{\tau}\nm{\zeta^{n,l+1}-\zeta^{n,m}} + C\sum_{m=0}^{l}\frac{h^{k+2}}{\tau}\nm{U^{n,l+1}-U^{n,m}}_{H^{k+1}\left( \Omega \right)} \\
			& + C\sum_{m=0}^{l}\frac{h\left( h^{k+1} + \tau^{r} \right)}{\tau}\nm{u_h^{n,l+1} - u_h^{n,m}}_{L^\infty(\Omega)}  + C\left( h^{k+2} + \tau^r \right).
		\end{aligned}
	\end{equation}
	{\bf A crucial observation is that the factor $\frac{h}{\tau}$ appearing in \eqref{eq:WW1} will be  exactly canceled by $\frac{\tau}{h}$ arising in \eqref{eq:0709-3}, forming a key point in our analysis.} 
	For $\nm{\zeta^{n,l+1}-\zeta^{n,m}}$, Lemma \ref{lmm:OEerr} indicates 
		\begin{align*}
			\nm{\zeta^{n,l+1}-\zeta^{n,m}} & \leq \nm{\zeta_\sigma^{n,l+1}-\zeta_\sigma^{n,m}} + \nm{u_\sigma^{n,m} - u_h^{n,m}} + \nm{\mathcal{F}_{\hat\tau_{l+1}}u_h^{n,l+1} - u_h^{n,l+1}}  \\
			& \leq \nm{\zeta_\sigma^{n,l+1}-\zeta_\sigma^{n,0}} + \nm{\zeta_\sigma^{n,m}-\zeta_\sigma^{n,0}} + C\tau \left( h^{k+1} + \tau^r \right).
		\end{align*}
	Notice from \eqref{eq:0630-2} that
		\begin{align*}
			\ip{\zeta_{\sigma}^{n,l+1}-\zeta_{\sigma}^{n,0}}{v} & = \sum_{m=0}^l \Big(  c_{lm}\ip{\zeta_{\sigma}^{n,m}-\zeta_{\sigma}^{n,0}}{v} + \tau d_{lm} H\left( \zeta_{\sigma}^{n,m}-\zeta_{\sigma}^{n,0}, v \right) \Big) \\
			& + \tau \left( \sum_{m=0}^ld_{lm} \right) H\left( \zeta_{\sigma}^{n,0}, v \right) + \tau\ip{\mathcal{S}^{n,l+1}}{v}.
		\end{align*}
	Since $\left| H\left( \omega , v \right) \right| \leq Ch^{-1}\nm{\omega}\nm{v}$ for all $\omega,v \in \mathbb{V}_h^k$ and $\nm{\mathcal{S}^{n,l+1}} \leq C\left( h^{k+1} + \tau^r \right) $, by inductions, we can verify under the constraint ${\frac{\tau}{h^\kappa} \leq C_{\text{CFL}}}$ that
	\begin{equation*}
		\nm{\zeta_\sigma^{n,m}-\zeta_\sigma^{n,0}} \leq C\frac{\tau}{h}\nm{\zeta_\sigma^{n,0}} + C \tau\left( h^{k+1} + \tau^r \right)\quad \forall m =0 ,\dots,s,\ \ \forall n.
	\end{equation*}
	As $\zeta_\sigma^{n,0} = \zeta_\sigma^{n}$, we conclude that 
	\begin{equation}\label{eq:0709-3}
		\nm{\zeta^{n,l+1}-\zeta^{n,m}} \leq C\frac{\tau}{h}\nm{\zeta_\sigma^{n}} + C \tau\left( h^{k+1} + \tau^r \right).
	\end{equation}
	For $\nm{U^{n,l+1}-U^{n,m}}_{H^{k+1}\left( \Omega \right)}$, similar to \eqref{eq:0721-2} and \eqref{eq:0721-3},
	\begin{equation}\label{eq:0709-4}
			\nm{U^{n,l+1}-U^{n,m}}_{H^{k+1}\left( \Omega \right)} \leq \nm{U^{n,l+1}-U^{n,0}}_{H^{k+1}\left( \Omega \right)} + \nm{U^{n,m}-U^{n,0}}_{H^{k+1}\left( \Omega \right)} \leq C\tau.
	\end{equation}
	For $\nm{u_h^{n,l+1} - u_h^{n,m}}_{L^\infty(\Omega)}$, observe that there exists $i_0$ such that
	\begin{equation}\label{eq:0710-1}
		\nm{u_h^{n,l+1} - u_h^{n,m}}_{L^\infty(\Omega)} = \nm{u_h^{n,l+1} - u_h^{n,m}}_{L^\infty\left(I_{i_0}\right)}.
	\end{equation}
	\eqref{eq:0710-1} implies that
	\begin{equation}\label{eq:0709-5}
		\begin{aligned}
			\nm{u_h^{n,l+1} - u_h^{n,m}}_{L^\infty(\Omega)} & \leq Ch^{-\frac{1}{2}}\nm{\zeta^{n,l+1}-\zeta^{n,m}}_{i_0} + C \nm{\Pe  \left( U^{n,l+1} - U^{n,m} \right) }_{L^\infty\left(I_{i_0}\right)} \\
			& \leq Ch^{-\frac{1}{2}} \nm{\zeta^{n,l+1}-\zeta^{n,m}} + C\nm{U^{n,l+1} - U^{n,m}}_{L^\infty(\Omega)} \\
			& \leq Ch^{-\frac{1}{2}} \nm{\zeta^{n,l+1}-\zeta^{n,m}} + C\tau
		\end{aligned}
	\end{equation}
	Combining \eqref{eq:0709-3}, \eqref{eq:0709-4}, and \eqref{eq:0709-5}, we obtain a technical estimate: 
		\begin{align*}
			\nm{\left( Z_1 \right)_c^{n,l+1}} & \leq C\sum_{m=0}^{l}\frac{ h+h^{k+\frac{3}{2}}+ h^{r+\frac{1}{2}} }{\tau}\nm{\zeta^{n,l+1}-\zeta^{n,m}} + C\left( h^{k+2} + \tau^r \right) \\
			& \leq C\sum_{m=0}^{l}\frac{ h }{\tau}\nm{\zeta^{n,l+1}-\zeta^{n,m}} + C\left( h^{k+2} + \tau^r \right) \\
			& \leq C\left( \nm{\zeta_{\sigma}^n} + h^{k+2} + \tau^r \right) \leq C\left( \nm{\widetilde{\zeta}_{\sigma}^n} + h^{k+2} + \tau^r \right).
		\end{align*}
	Therefore, for $d=1$,
	\begin{equation}\label{eq:0709-6}
		\nm{\widetilde{\mathcal{S}}^{n,l+1}_3} \leq C\left( \nm{\widetilde{\zeta}_{\sigma}^n} + h^{k+2} + \tau^r \right).
	\end{equation}
	
	For 2D problem ($d=2$), it follows from \Cref{prop:welldefiness} that
	\begin{equation*}
		\nm{\left( Z_1 \right)_c^{n,l+1}}_{i,j} \leq Ch\nm{\left( Z_1 \right)_c^{n,l+1}}_{L^{\infty}\left(  I_{i,j} \right)} \leq Ch\nm{V_c^{n,l+1}}_{i,j},
	\end{equation*}
	and let $F_{\eta_c} \in \mathbb{V}_h^k$ satisfy $\ip{F_{\eta_c}}{v}_{i,j} = H_{i,j}\left( \eta_c^{n,l+1},v \right)$ for all $i,j$,
		\begin{align*}
			\nm{\left( Z_2 \right)_c^{n,l+1}} \leq Ch\nm{\left( Z_2 \right)_c^{n,l+1}}_{L^{\infty}\left(  I_{i,j} \right)} & \leq Ch \left( \nm{\nabla U_c^{n,l+1}-\Pe \left( \nabla U_c^{n,l+1} \right)} + \nm{F_{\eta_c}} \right) \\
			& \leq Ch^{k+2}\nm{U_c^{n,l+1}}_{H^{k+2}\left( \Omega \right)} \leq Ch^{k+2}.
		\end{align*}
	Then, similarly to the case $d=1$, we can prove \eqref{eq:0709-6} for $d=2$.

	\textbf{Estimation for $\nm{\widetilde{\mathcal{S}}^{n,l+1}_4}$}
	Here we only consider the estimation for $\nm{\widetilde{\mathcal{S}}^{n,l+1}_4}$ when $d=1$. Because $\widetilde{\mathcal{S}}^{n,\ell+1}_4$ is the cellwise correction associated with the OE step, the argument is local and carries over directly to $d=2$ on each cell $C_{i,j}$ and its faces. The 2D case is therefore omitted.
	
	According to Definition \ref{def:1Dcorrectfun} and \eqref{eq:0710-2},
	\begin{equation*}
		\ip{\widetilde{\mathcal{S}}^{n,l+1}_4}{v} = \ip{\frac{\mathcal{F}_{\hat\tau_{l+1}}u_h^{n,l+1} - u_h^{n,l+1}}{\tau}}{v} + \ip{V_d^{n,l+1}}{v},
	\end{equation*}
	for all $v \in \mathbb{V}_h^k$. If we substitute $v=v_i$ where $v_i \in \mathbb{V}_h^k$ is defined by
	\begin{equation*}
		v_i(x) = \begin{cases}
			\widetilde{\mathcal{S}}^{n,l+1}_4(x),\quad & x \in I_i, \\
			0,\quad & otherwise,
		\end{cases}
	\end{equation*}
	we can obtain that for every $i$,
	\begin{equation}\label{eq:0710-3}
		\nm{\widetilde{\mathcal{S}}^{n,l+1}_4}_i^2 = \ip{\frac{\mathcal{F}_{\hat\tau_{l+1}}u_h^{n,l+1} - u_h^{n,l+1}}{\tau}}{v_i}_i + \ip{\left( V_i \right)_d^{n,l+1}}{v_i}_i.
	\end{equation}
	Let $u_\sigma\left( \hat{t} \right)$ be the solution of the following initial value problem
	\begin{equation*}
		\begin{cases}
			\frac{\mathrm{d}}{\mathrm{d}\hat{t}} \ip{u_{\sigma}}{v}_i = -\sum_{p=0}^{k}\delta_{I_i}^p\left( u_h^{n,l+1} \right) \ip{u_\sigma - P^{(p-1)}u_{\sigma}}{v}_i, \\
			u_{\sigma}(0) = u_h^{n,l+1},
		\end{cases}
	\end{equation*}
	for all $i$ and $v \in \mathbb{P}^k\left( I_i \right)$, then $\mathcal{F}_{\hat\tau_{l+1}}u_h^{n,l+1} = u_{\sigma}\left( \hat\tau_{l+1} \right)$. Using the mean value theorem for integration, we can find $0<\hat{s}_i<\hat\tau_{l+1}$ such that
		\begin{align*}
			\ip{\frac{\mathcal{F}_{\hat\tau_{l+1}}u_h^{n,l+1} - u_h^{n,l+1}}{\tau}}{v_i}_i & = \frac{\int_{0}^{\hat\tau_{l+1}}\frac{\mathrm{d}}{\mathrm{d}\hat{t}} \ip{u_{\sigma}}{v_i}_i\, \mathrm{d}\hat{t}}{\tau} = \frac{\hat\tau_{l+1}}{\tau}\left( \frac{\mathrm{d}}{\mathrm{d}\hat{t}} \ip{u_{\sigma}}{v_i}_i \bigg|_{\hat{t}=\hat{s}_i} \right) \\
			&= - \sum_{m=0}^{l} d_{lm}\sum_{p=0}^{k}\delta_{I_i}^p\left( u_h^{n,l+1} \right) \ip{u_\sigma\left( \hat{s}_i \right) - P^{(p-1)}u_{\sigma}\left( \hat{s}_i \right)}{v_i}_i.
		\end{align*}
	Then with \eqref{eq:0710-3},
		\begin{align*}
			\nm{\widetilde{\mathcal{S}}^{n,l+1}_4}_i^2 & = \sum_{m=0}^{l} d_{lm}\sum_{p=0}^{k}\delta_{I_i}^p\left( u_h^{n,l+1} \right) \ip{ u_\sigma^{n,l+1}-u_{\sigma}\left( \hat{s}_i \right) - P^{(p-1)}\left( u_\sigma^{n,l+1}-u_{\sigma}\left( \hat{s}_i \right) \right)}{v_i}_i \\
			&- \sum_{m=0}^{l} d_{lm}\sum_{p=0}^{k}\delta_{I_i}^p\left( u_h^{n,l+1} \right)\ip{\zeta_{\sigma}^{n,l+1}-P^{(p-1)}\zeta_{\sigma}^{n,l+1}}{v_i}_i  \\
			&- \sum_{m=0}^{l} d_{lm}\sum_{p=0}^{k}\delta_{I_i}^p\left( u_h^{n,l+1} \right) \ip{\Pe \left(U^{n,l+1}-U^{n,m}\right) - P^{(p-1)}\Pe \left(U^{n,l+1}-U^{n,m}\right)}{v_i}_i \\
			&- \sum_{m=0}^{l} d_{lm}\sum_{p=0}^{k} \left( \delta_{I_i}^p\left( u_h^{n,l+1} \right) -\delta_{I_i}^p\left( u_h^{n,m} \right) \right) \ip{\Pe U^{n,m}-P^{(p-1)}\Pe U^{n,m}}{v_i}_i.
		\end{align*}
	Since $\nm{v_i}_i=\nm{\widetilde{\mathcal{S}}^{n,l+1}_4}_i$,
		\begin{align*}
			& \nm{\widetilde{\mathcal{S}}^{n,l+1}_4}_i \leq C\sum_{p=0}^{k} \left| \delta_{I_i}^p\left( u_h^{n,l+1} \right) \right| \cdot \left( \nm{u_\sigma^{n,l+1}-u_{\sigma}\left( \hat{s}_i \right)}_i + \nm{\zeta_{\sigma}^{n,l+1}}_i \right) \\
			& + C\sum_{m=0}^l\sum_{p=0}^{k} \left| \delta_{I_i}^p\left( u_h^{n,l+1} \right) \right| \cdot \left( \nm{\eta^{n,l+1}-\eta^{n,m}}_i + \nm{U^{n,l+1}-U^{n,m}-P^{(0)}\left(U^{n,l+1}-U^{n,m}\right)}_i \right) \\
			& +  C\sum_{m=0}^l\sum_{p=0}^{k} \left| \delta_{I_i}^p\left( u_h^{n,l+1} \right) - \delta_{I_i}^p\left( u_h^{n,m} \right) \right| \cdot \left( \nm{\eta^{n,m}}_i + \nm{U^{n,m}-P^{(0)}U^{n,m}}_i \right).
		\end{align*}
	Notice that
	\begin{equation*}
		\nm{u_\sigma^{n,l+1}-u_{\sigma}\left( \hat{s}_i \right)}_i \leq \nm{\mathcal{F}_{\hat\tau_{l+1}}u_h^{n,l+1} - u_h^{n,l+1}} \leq C\tau\left( h^{k+1} + \tau^r \right),
	\end{equation*}
	\begin{equation*}
		\nm{\eta^{n,m}}_i + \nm{U^{n,m}-P^{(0)}U^{n,m}}_i \leq Ch\nm{U^{n,m}}_{H^1\left(I_i\right)} \leq Ch^{\frac{3}{2}}\nm{U^{n,m}}_{W^{1,\infty}\left(\Omega\right)},
	\end{equation*}
	and by \eqref{eq:refsol},
	\begin{equation*}
		\begin{aligned}
			& \nm{\eta^{n,l+1}-\eta^{n,m}}_i + \nm{U^{n,l+1}-U^{n,m}-P^{(0)}\left(U^{n,l+1}-U^{n,m}\right)}_i \\
			& {\leq Ch\left( \nm{U^{n,l+1}-U^{n,0}}_{H^{1}\left(I_i\right)} + \nm{U^{n,m}-U^{n,0}}_{H^{1}\left(I_i\right)} \right) \leq C\tau h^{\frac{3}{2}}.}
		\end{aligned}
	\end{equation*}
	Meanwhile, during the proof of \Cref{prop:opterror}, we need to show that when ${\frac{\tau}{h^{\kappa}}\leq C_{\text{CFL}}}$,
	\begin{equation*}
		\nm{\zeta_{\sigma}^{n,l+1}} \leq \nm{\zeta_\sigma^n} + \nm{\zeta_\sigma^{n,l+1}-\zeta_\sigma^{n,0}}
		\leq C\nm{\zeta_\sigma^n} + C\tau\left( h^{k+1} + \tau^r \right),
	\end{equation*}
	 and
	\begin{equation*}
		\begin{aligned}
			h^{\frac{3}{2}}\left( \sum_i\sum_{m=0}^l\sum_{p=0}^{k} \left( \delta_{I_i}^p\left( u_h^{n,l+1} \right) - \delta_{I_i}^p\left( u_h^{n,m} \right) \right)^2 \right)^{\frac{1}{2}} & \leq C\frac{\tau}{h}\left( \nm{\zeta_\sigma^n} + h^{k+2} + \tau^r \right) \\
			& \leq C\nm{\zeta_\sigma^n} + C\tau\left( h^{k+1} + \tau^r \right).
		\end{aligned}
	\end{equation*}
	Therefore, by Lemma \ref{lmm:OEerr},
		\begin{align*}
			\nm{\widetilde{\mathcal{S}}^{n,l+1}_4} & \leq C\left( \sum_i\sum_{p=0}^{k} \left( \delta_{I_i}^p\left( u_h^{n,l+1} \right) \right)^2 \right)^\frac{1}{2} \left( \nm{\zeta_\sigma^n} + \tau\left( h^{k+1} + \tau^r \right) \right) \\
			& + C\tau h^{\frac{3}{2}}\left( \sum_i\sum_{p=0}^{k} \left( \delta_{I_i}^p\left( u_h^{n,l+1} \right) \right)^2 \right)^\frac{1}{2} \\
			& + Ch^{\frac{3}{2}}\left( \sum_i\sum_{m=0}^l\sum_{p=0}^{k} \left( \delta_{I_i}^p\left( u_h^{n,l+1} \right) - \delta_{I_i}^p\left( u_h^{n,m} \right) \right)^2 \right)^{\frac{1}{2}} \\
			& \leq C\bigg( 1 + h^{k-\frac{1}{2}} + \left(\frac{\tau}{h} \right)^r h^{r-\frac{3}{2}} \bigg)\left( \nm{\zeta_\sigma^n} + \tau\left( h^{k+1} + \tau^r \right) \right) \leq C\left( \nm{\widetilde{\zeta}_\sigma^n} +  h^{k+2} + \tau^r \right).
		\end{align*}

	\section{Extension to the linear variable-coefficient advection equation \label{sec:VC}}
Throughout this section we set $d\le 2$ and assume that the wind direction does not reverse; i.e., the wave speeds $\bm{\beta}=(\beta^1,\beta^2)$ keep  fixed signs over $(0,T)$ in each cell: for convenience and without loss of generality, we assume {$\beta^1>0$ and $\beta^2>0$}. Furthermore, we also assume $\|\nabla\!\cdot \bm{\beta}\|_{L^\infty(\Omega)}\le C$. The model problem is
\begin{equation}\label{eq:VC:PDE}
	\partial_t u + \nabla\!\cdot\!\big(\bm{\beta}(\bm{x})\,u\big)=0
	\qquad (\bm{x},t)\in \Omega\times(0,T],
\end{equation}
where $\bm{\beta}:\Omega\to\mathbb{R}^d$ is a smooth velocity field.	In this section we extend the superconvergence theory of Section~\ref{sec:superconvergence} to the smooth
variable-coefficient linear advection equation in one and two dimensions.
We use the same finite element space and upwind numerical fluxes as in the main text.
We assume that the velocity field $\beta(x)$ has a fixed (uniform) wind direction,
$\min_{1\le i\le d} \beta_i(x)>0$ for all $x\in\Omega$, and $\|\nabla\!\cdot\bm{\beta}\|_{L^\infty(\Omega)}$ is bounded.
\subsection{$L^2$ stability of the RKDG method}
We denote a generic constant $C>0$ which may depend on $\|\bm{\beta}\|_{W^{1,\infty}(\Omega)}$ and $\|\nabla\!\cdot\bm{\beta}\|_{L^\infty(\Omega)}$, and on the mesh shape-regularity, but is independent of $h$.
The DG spatial operator for linear variable-coefficient advection equations enjoys the following properties. 
\begin{lemma}[Weak boundedness]\label{lem:VC:weakbdd}
	For all $w,v\in \mathbb{V}_h^k$,
	\[
	|H(w,v)| \;\le\; C\,h^{-1}\,\|w\|\,\|v\|.
	\]
\end{lemma}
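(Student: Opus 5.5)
The plan is to bound the volume term and the interface terms of $H(w,v)$ separately, cell by cell, and then sum over cells. In 2D,
\[
H_{i,j}(w,v)=\ip{\bm{\beta}w}{\nabla v}_{i,j}+\text{(edge terms)},
\]
and in 1D $H_i(w,v)$ has the same structure. On each edge, the edge terms pair an upwind trace $(\beta^r w)^-$, taken from cell $I_{i,j}$ or from its upwind neighbour, with an interior trace of $v$.

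For the volume term, use Cauchy--Schwarz with $\|\bm{\beta}\|_{L^\infty(\Omega)}\le C$ and the first inverse estimate in \eqref{eq:inveq}:
\[
\bigl|\ip{\bm{\beta}w}{\nabla v}_{i,j}\bigr|\le C\nm{w}_{i,j}\,h^{-1}\nm{v}_{i,j}.
\]
Since $\bm{\beta}$ is smooth, its $W^{1,\infty}$ norm is available and nothing further is needed.

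For each edge integral, bound $\beta^r$ by its $L^\infty$ norm and apply Cauchy--Schwarz on the edge. Then use the standard trace inverse inequality for polynomials,
\[
\nm{v}_{L^2(e)}\le C h^{-1/2}\nm{v}_{L^2(K)}, \qquad v\in\mathbb{P}^k(K),\ e\subset\partial K .
\]
This inequality follows from the $L^\infty$ inverse estimate in \eqref{eq:inveq} together with $|e|\le Ch^{d-1}$ and mesh regularity (quasi-uniform in 1D, uniform Cartesian in 2D); in 1D the edge is a point and the pointwise bound $|v(x_{i\pm1/2}^\mp)|\le Ch^{-1/2}\nm{v}_i$ replaces it.

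Each edge contribution is therefore bounded by $Ch^{-1}\nm{w}_{K'}\nm{v}_{K}$, where $K'\in\{K,$ its upwind neighbour$\}$. Summing over cells, apply the discrete Cauchy--Schwarz inequality. Each cell appears as $K'$ for only a bounded number of neighbours ($2d$ at most), so $\sum_{K}\nm{w}_{K'}^2\le C\nm{w}^2$, which gives $|H(w,v)|\le Ch^{-1}\nm{w}\,\nm{v}$.

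There is no real obstacle here. The only point needing care is the index shift in the upwind traces, $w(x_{i-1/2}^-)$ living on $I_{i-1}$, and that is handled by the finite-overlap counting above. The constant depends only on $\|\bm{\beta}\|_{L^\infty(\Omega)}$, $k$, and the mesh regularity.
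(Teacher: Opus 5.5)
Your proposal is correct. It is the standard argument the paper relies on: the paper states this lemma without proof and uses the same bound $|H(\omega,v)|\le Ch^{-1}\nm{\omega}\nm{v}$ without comment in \Cref{sec:proof_const}. The argument bounds the volume term via \eqref{eq:inveq}, bounds each edge term via the trace inverse inequality $\nm{v}_{L^2(e)}\le Ch^{-1/2}\nm{v}_{L^2(K)}$, and sums with a discrete Cauchy--Schwarz inequality that accounts for the upwind index shift, with only $\|\bm{\beta}\|_{L^\infty(\Omega)}$ and mesh regularity entering the constant.
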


\begin{lemma}[Quasi skew-symmetry and $L^2$ stability]\label{lem:VC:skewsym}
	Let $\mathcal{T}_h$ be a {shape-regular} partition of $\Omega$ and, for each cell $C\in\mathcal{T}_h$, let $\bm n_{\partial C}$ denote the outward unit normal on $\partial C$ and $\llbracket\cdot\rrbracket$ the scalar jump. Then, for all $w,v\in \mathbb{V}_h^k$,
	\begin{align*}
		H(w,v)+H(v,w)
		&= -\int_{\Omega} (\nabla\!\cdot \bm{\beta})\, w\,v\,\mathrm{d}\bm{x}
		\;-\; \sum_{C\in \mathcal{T}_h}\int_{\partial C}\!\big|\bm{\beta}\!\cdot\!\bm{n}_{\partial C}\big|\,\llbracket w\rrbracket\,\llbracket v\rrbracket\,\mathrm{d}s .
	\end{align*}
	Furthermore, by taking $w$ as $v$, we obtain the $L^2$ stability for the semidiscrete scheme. There exists $C>0$ such that the semidiscrete solution $u_h(t)$ satisfies
	\[
	\frac{d}{dt}\,\|u_h(t)\|^2 \;\le\; C\,\|u_h(t)\|^2 .
	\]
	
\end{lemma}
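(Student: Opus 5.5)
The identity will follow by integrating by parts cell by cell, adding $H(w,v)$ and $H(v,w)$, and collecting the interface terms edge by edge. The stability bound then follows by taking $w=v$. I write $\bm{\beta}_n:=\bm{\beta}\cdot\bm{n}_{\partial C}$ on each face, so that $\bm{\beta}_n>0$ on outflow faces and $\bm{\beta}_n<0$ on inflow faces, given the fixed wind direction $\beta^1,\beta^2>0$.

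\textbf{Volume terms.} On each cell $C$, the volume parts of $H(w,v)+H(v,w)$ give
\[
\int_C \bm{\beta}\cdot(w\nabla v+v\nabla w)\,\mathrm{d}\bm{x}=\int_C \bm{\beta}\cdot\nabla(wv)\,\mathrm{d}\bm{x}=-\int_C(\nabla\cdot\bm{\beta})\,wv\,\mathrm{d}\bm{x}+\int_{\partial C}\bm{\beta}_n\,w^{\rm int}v^{\rm int}\,\mathrm{d}s .
\]
Here $w^{\rm int}$ denotes the trace from inside $C$.

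\textbf{Flux terms.} The upwind flux terms of $H_C(w,v)$ have the following form. On an outflow face of $C$ the contribution is $-\bm{\beta}_n w^{\rm int}v^{\rm int}$. On an inflow face it is $-\bm{\beta}_n w^{\rm ext}v^{\rm int}=|\bm{\beta}_n|\,w^{\rm ext}v^{\rm int}$, since the upwind value is the exterior trace $w^{\rm ext}$. This matches the 1D/2D formulas for $H_i$ and $H_{i,j}$, with $\beta$ evaluated on the face.

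\textbf{Edge-by-edge collection.} Fix an interior edge $e$. Call $w^-$ the upwind-side trace and $w^+$ the downwind-side trace, and let $b=|\bm{\beta}\cdot\bm{n}_e|$. Sum over the two cells sharing $e$ and over both orderings $(w,v)$ and $(v,w)$. The boundary term from integrating the volume part by parts contributes $b(w^-v^- - w^+v^+)$. The flux terms contribute
\[
-2b\,w^-v^- + b\,(w^-v^+ + v^-w^+).
\]
The total is
\[
-b\,(w^-v^- + w^+v^+ - w^-v^+ - v^-w^+) = -b\,(w^- - w^+)(v^- - v^+) = -b\,\llbracket w\rrbracket\llbracket v\rrbracket .
\]
Periodicity removes any boundary leftovers. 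This yields the stated identity; the sum over $\partial C$ counts each edge once from the appropriate side, which fixes the normalization.

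\textbf{Stability.} Set $w=v=u_h$. Then
\[
2H(u_h,u_h)\le \|\nabla\cdot\bm{\beta}\|_{L^\infty}\|u_h\|^2,
\]
since the jump term is nonpositive. The semidiscrete scheme gives $\frac{d}{dt}\|u_h\|^2=2H(u_h,u_h)$, which yields the stability bound with $C=\|\nabla\cdot\bm{\beta}\|_{L^\infty}$.

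\textbf{Main obstacle.} The only delicate point is the sign and orientation bookkeeping in the edge collection, including the exact form of the variable-coefficient upwind flux. I would verify this by handling one vertical edge explicitly in the 2D formula for $H_{i,j}$ with $\beta_1$ in place of $1$; horizontal edges are handled identically.
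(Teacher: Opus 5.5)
Your argument is correct and is the standard one: cellwise integration by parts of the volume terms, then an edge-by-edge collection of traces. The bookkeeping checks out:
\begin{itemize}
\item the integration by parts leaves the boundary term $b(w^-v^- - w^+v^+)$;
\item the upwind flux terms contribute $-2b\,w^-v^- + b(w^-v^+ + v^-w^+)$;
\item the total is $-b\llbracket w\rrbracket\llbracket v\rrbracket$.
\end{itemize}
The stability step with $C=\|\nabla\cdot\bm{\beta}\|_{L^\infty(\Omega)}$ is also correct. The paper states this lemma without proof, so there is no competing route to compare with.

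The one step you should not gloss over is the normalization. What you have actually proved is
\[
H(w,v)+H(v,w) = -\int_\Omega(\nabla\cdot\bm{\beta})\,wv\,\mathrm{d}\bm{x} - \sum_{e}\int_e|\bm{\beta}\cdot\bm{n}_e|\,\llbracket w\rrbracket\llbracket v\rrbracket\,\mathrm{d}s,
\]
with each interior edge $e$ counted once. The displayed statement instead sums $\int_{\partial C}$ over all cells, which visits every interior edge twice. Read literally, its jump term is therefore twice too large. Already for $\beta\equiv 1$ in 1D one has $2H(v,v)=-\sum_i\llbracket v\rrbracket_{i+1/2}^2$, whereas the literal right-hand side gives $-2\sum_i\llbracket v\rrbracket_{i+1/2}^2$.

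Your sentence that the cell sum ``counts each edge once from the appropriate side'' reinterprets the statement rather than deriving it. Say explicitly that the identity holds as an edge sum, or equivalently with a factor $\tfrac12$ in front of $\sum_C\int_{\partial C}$. This does not affect the $L^2$ stability conclusion, which only uses that the jump term is nonpositive.

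One minor scope point: on a general shape-regular mesh, as the hypothesis allows, $\bm{\beta}\cdot\bm{n}$ may change sign along a face. Your computation then has to be read pointwise, with the upwind trace chosen pointwise, but the algebra is unchanged.
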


\subsubsection{Matrix transferring techniques for the proof of Lemma \ref{lem:VC:stab}} \label{ass:transfer} 

Write
\begin{equation}\label{equ:energy}
	\|u_h^{n+m}\|^2-\|u_h^{n}\|^2
	=\sum_{0\le i,j\le{ms-1}} a^{(\ell)}_{ij}\,(\mathbb D_i u_h^{n},\mathbb D_j u_h^{n})
	\;+\; \tau\sum_{0\le i,j\le {ms-1}} b^{(\ell)}_{ij}\,H(\mathbb D_i u_h^{n},\mathbb D_j u_h^{n}),
\end{equation}
with symmetric $A^{(\ell)}:=\big(a_{ij}^{(\ell)}\big)$ and $B^{(\ell)}:=\big(b_{ij}^{(\ell)}\big)$.
Although the coefficient is variable, the equation we consider is still linear. Thus, the same matrix transferring techniques (3.4) in \cite{xu2020superconvergence} can be applied to \eqref{equ:energy}.  Symmetry is preserved at each transfer. After $L$ steps, the first $L$ columns of $A^{(L)}$
are zero. By Lemma~\ref{lem:VC:skewsym} and Lemma~\ref{lem:VC:weakbdd}, the matrix-transfer/energy argument in Lemma~\ref{assump:stability} carries over with a growth factor depending on $\|\nabla\!\cdot\bm{\beta}\|_{L^\infty}$; see, e.g., \cite[Theorem~3.1]{xu2019l2} and \cite[Proposition~3.2]{ai20222}.

\begin{lemma}[Energy stability]\label{lem:VC:stab}
	Consider the RKDG scheme with stage sources $\{g^{n,m}\}$. We set $u_h^{0,0} = u_h^{0}$ and $	u_h^{n+1,0} = u_h^{n,s}$
	\begin{subequations}
		\begin{align*}
			\ip{u_h^{n,\ell+1}}{v}
			&= \sum_{m=0}^{\ell}\Big(c_{\ell m}\,\ip{u_h^{n,m}}{v}
			+ \tau\, d_{\ell m}\, H\!\left(u_h^{n,m},v\right)\Big)
			+ \tau\,\ip{g^{n,\ell+1}}{v},\ \ell=0,\dots,s-1.
		\end{align*}
	\end{subequations}
	with $v\in\mathbb{V}_h^{k}$ and $\{g^{n,\ell+1}\}\subset\mathbb{V}_h^{k}$. Then there exists $\kappa\ge 1$ such that, under the CFL restriction $\tau\le C_{\mathrm{CFL}}\,h^{\kappa}$,
	$
	\|u_h^{n+1}\|^{2}\;\le\; \bigl(1+C_{\mathrm{s}}\tau\bigr)\,\|u_h^{n}\|^{2}
	\;+\; C\,\tau\sum_{\ell=0}^{s-1}\|g^{n,\ell+1}\|^{2},
	$
	where $C_{\mathrm{s}}$ depends on $\|\nabla\!\cdot \bm{\beta}\|_{L^{\infty}(\Omega)}$ but not on $h$ or $\tau$. If the time integrator is an $r$-stage, $r$th-order RK method, one can choose 
	$\kappa$ as in \cite{xu2020superconvergence,sun2019strong}.
\end{lemma}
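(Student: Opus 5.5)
The plan is to follow the matrix-transferring energy method of \cite{xu2020superconvergence,sun2019strong,xu2019l2}, with \Cref{lem:VC:skewsym} and \Cref{lem:VC:weakbdd} in place of the exact skew-symmetry and boundedness used in the constant-coefficient case. First I treat the homogeneous scheme ($g\equiv0$). Linearity gives $u_h^{n+1}=R(\tau L_h)u_h^n$, where $R$ is the stability polynomial of the RK method and $L_h$ is the DG operator defined by $(L_hw,v)=H(w,v)$. Introducing $\mathbb D_0=I$ and $\mathbb D_j=(\tau L_h)^j$, I expand $\|u_h^{n+1}\|^2-\|u_h^n\|^2$ as in \eqref{equ:energy}, with symmetric coefficient matrices $A^{(0)}$ and $B^{(0)}$.

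The transfer step is the same as in the constant-coefficient argument. The identity $(\mathbb D_{i+1}w,\mathbb D_jw)=\tau H(\mathbb D_iw,\mathbb D_jw)$ lets me move entries of $A$ into $B$ one column at a time, and this preserves symmetry. After $\zeta$ steps I obtain the termination index and the contribution indicator exactly as in the constant case, because these depend only on the Butcher coefficients and not on $\bm\beta$.

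Next I estimate the resulting quadratic form. Each diagonal term $\tau b_{ii}H(\mathbb D_iw,\mathbb D_iw)$ is handled by \Cref{lem:VC:skewsym}. It equals a nonpositive jump term plus $-\tfrac{\tau b_{ii}}2\int(\nabla\cdot\bm\beta)|\mathbb D_iw|^2$, and the latter is bounded by $C\tau\|\mathbb D_iw\|^2\le C\tau\|w\|^2$. Here I use $\|\mathbb D_iw\|\le C(\tau/h)^i\|w\|\le C\|w\|$, which follows from \Cref{lem:VC:weakbdd} under the CFL condition.

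The off-diagonal terms $H(\mathbb D_iw,\mathbb D_jw)$ with $i\neq j$ are the main obstacle. With constant coefficients they are controlled by the jump seminorms $\|\llbracket\mathbb D_iw\rrbracket\|$ because $H(v,v)$ equals minus the jump energy exactly. With variable coefficients I would split $H(v,w)=\tfrac12[H(v,w)-H(w,v)]+\tfrac12[H(v,w)+H(w,v)]$. The symmetric part is again jump terms plus $O(\|v\|\|w\|)$ by \Cref{lem:VC:skewsym}. The skew part is bounded via \Cref{lem:VC:weakbdd} by $Ch^{-1}\|v\|\|w\|$, and I expect the extra factor $\tau/h$ from the index shift to make this term higher order. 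That leaves two cases:
\begin{itemize}
\item If the leading coefficient is negative (the case $\zeta$ with $\chi<0$), the jump terms are absorbed by Young's inequality, and the remaining terms are $\le C\tau\|w\|^2+C(\tau/h)^{2\zeta}\ldots$, which the CFL condition $\tau\le C_{\rm CFL}h^\kappa$ bounds by $C\tau\|w\|^2$.
\item If $\chi>0$, I use the weaker estimate $\|\mathbb D_{\zeta}w\|$-type bounds, giving the condition with $\kappa>1$ as in \cite{xu2020superconvergence,sun2019strong}.
\end{itemize}
Either way I expect $\|R(\tau L_h)w\|^2\le(1+C_s\tau)\|w\|^2$, with $C_s$ depending on $\|\nabla\cdot\bm\beta\|_{L^\infty}$.

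Finally I add the sources. Linearity gives
\[
u_h^{n+1}=R(\tau L_h)u_h^n+\tau\sum_\ell Q_\ell(\tau L_h)g^{n,\ell+1},
\]
where the $Q_\ell$ are polynomials, so $\|Q_\ell(\tau L_h)\|\le C$ under the CFL condition by \Cref{lem:VC:weakbdd}. Hence
\[
\|u_h^{n+1}\|^2\le(1+\tau)\|R u_h^n\|^2+(1+\tau^{-1})\,C\tau^2\sum\|g\|^2,
\]
which yields the stated bound.
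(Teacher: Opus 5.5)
The gap is in the step you single out as the main obstacle. The rest of your skeleton matches the paper: the transfer is purely algebraic, so the indices are those of the constant-coefficient case, and the $B$-part is handled by Lemma~\ref{lem:VC:skewsym}. Your treatment of the sources, via $u_h^{n+1}=R(\tau L_h)u_h^n+\tau\sum_\ell Q_\ell(\tau L_h)g^{n,\ell+1}$ and Young's inequality, is correct and cleaner than the paper's.

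Bounding the skew-symmetric part of the off-diagonal $B$-terms by weak boundedness is too lossy. The entry $b^{(L)}_{01}$ is nonzero for every method of order at least two: it is created in the first transfer step from the coefficient $\tfrac12$ of $z^2$ in the stability polynomial, and later steps do not modify it. For this entry your estimate gives only
\[
\tau\,|b_{01}|\,\bigl|H(\mathbb D_0w,\mathbb D_1w)-H(\mathbb D_1w,\mathbb D_0w)\bigr|\le C\tau h^{-1}\|w\|\,\|\mathbb D_1w\|\le C\lambda^2\|w\|^2,\qquad \lambda=\tau/h .
\]
Requiring $\lambda^2\le C\tau$ forces $\tau\le Ch^2$. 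Rewriting the skew part via $(\mathbb D_{i+1}w,\mathbb D_jw)=\tau H(\mathbb D_iw,\mathbb D_jw)$ still leaves $O(\lambda^2)$ terms, so no index shift rescues it. Consequently, in your $\chi<0$ bullet the remainder is not $C\tau\|w\|^2+C\lambda^{2\zeta}\|w\|^2$; it contains $C\lambda^2\|w\|^2$. You therefore cannot recover the reference exponent asserted in the last sentence of the lemma (e.g.\ $\kappa=1$ for third-order RK). With $\kappa=2$ the lemma is essentially trivial: it follows from $2\tau H(w,w)\le C\tau\|w\|^2$ plus an $O(\lambda^2)$ remainder. As written, your argument yields only that trivial version.

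The missing observation is that there is no skew part to estimate. Each transfer step creates $B$-entries only through the identity $(\mathbb D_iw,\mathbb D_jw)+(\mathbb D_{i+1}w,\mathbb D_{j-1}w)=\tau\bigl[H(\mathbb D_iw,\mathbb D_{j-1}w)+H(\mathbb D_{j-1}w,\mathbb D_iw)\bigr]$. Equivalently, since $B^{(L)}$ is symmetric,
\[
\tau\sum_{i,j}b^{(L)}_{ij}H(\mathbb D_iw,\mathbb D_jw)=\frac{\tau}{2}\sum_{i,j}b^{(L)}_{ij}\bigl[H(\mathbb D_iw,\mathbb D_jw)+H(\mathbb D_jw,\mathbb D_iw)\bigr],
\]
so the skew-symmetric part of $H$ cancels identically. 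Lemma~\ref{lem:VC:skewsym} then turns the whole $B$-part into two pieces. The first is the $|\bm\beta\cdot\bm n|$-weighted jump quadratic form, handled exactly as in \cite{xu2019l2} using positive definiteness of the leading $\rho\times\rho$ block and inverse estimates. The second is the volume terms $-\tfrac{\tau}{2}\sum_{i,j}b^{(L)}_{ij}\int_\Omega(\nabla\cdot\bm\beta)\,\mathbb D_iw\,\mathbb D_jw$, bounded by $C\tau\|w\|^2$ when $\lambda\le C$. This is exactly the paper's argument, and with this one-line correction your proof goes through with the reference value of $\kappa$.
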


\begin{proof}
	Throughout, $Q(\lambda)$ denotes a polynomial of $\lambda:=\tau/h$.
	We follow the matrix-transfer framework of \cite{xu2019l2}, with the explicit transfer
	recurrences stated in \Cref{ass:transfer} of temporal difference defined  by
	\begin{equation}\label{equ:tempordiff}
		(\mathbb{D}_{0}u_h^{\,n},v):=(u_h^{\,n},v),\qquad
		(\mathbb{D}_{q}u_h^{\,n},v):=\tau\,H(\mathbb{D}_{q-1}u_h^{\,n},v)\quad(q\ge1).
	\end{equation}
	By definition, the stage solution can be expressed as a linear combination of the temporal differences,
	\begin{equation}\label{eq:key_energy_identity}
		u_h^{\,n+m}=\sum_{l=0}^{ms-1}\alpha_l\,\mathbb{D}_l u_h^{\,n},
	\end{equation}
	which implies
	$
	\|u_h^{\,n+m}\|^2-\|u_h^{\,n}\|^2
	=\sum_{0\le l_1,l_2\le ms} a^{(0)}_{l_1l_2}\,
	\big(\mathbb{D}_{l_1}u_h^{\,n},\,\mathbb{D}_{l_2}u_h^{\,n}\big).
	$ Using (3.4) in \cite{xu2020superconvergence} for exactly ${L}$ transfer steps yields
	\begin{align*}
		\|u_h^{\,n+m}\|^2-\|u_h^{\,n}\|^2
		=\sum_{0\le l_1,l_2\le ms-1} \big[a_{l_1l_2}^{({L})}\,
		\big(\mathbb{D}_{l_1}u_h^{\,n},\mathbb{D}_{l_2}u_h^{\,n}\big)
		\;+\;
		\tau b_{l_1l_2}^{({L})}\,
		H\!\big(\mathbb{D}_{l_1}u_h^{\,n},\mathbb{D}_{l_2}u_h^{\,n}\big)\big].
	\end{align*}
	By construction, after $L$ transfers the first $L$ columns and rows of $A^{(L)}=(a^{(L)}_{l_1l_2})$ vanish,
	and the $\rho\times \rho$ leading principal block of $B^{(L)}=(b^{(L)}_{l_1l_2})$ is symmetric
	positive definite.
	
	\textbf{Estimate of the $A^{(L)}$-part.}
	Using inverse/trace inequalities together with \eqref{equ:tempordiff}, we have
	$$
	\Big|\sum_{0\le l_1,l_2\le ms-1} a_{l_1l_2}^{(L)}\,
	(\mathbb{D}_{l_1}u_h^{\,n},\mathbb{D}_{l_2}u_h^{\,n})\Big|
	\;\le\;
	\big(a_{LL}^{(L)}\,\lambda^{2L}+\lambda^{2L+1}\,Q_1(\lambda)\big)\,
	\|u_h^{\,n}\|^2.
	$$
	
	\textbf{Estimate of the $B^{(L)}$-part.}
	For $w=\mathbb{D}_{l_1}u_h^{\,n}$ and $v=\mathbb{D}_{l_2}u_h^{\,n}$,
	Lemma~\ref{lem:VC:skewsym} gives
	$
	H(w,v)+H(v,w)
	= -\!\int_{\Omega}(\nabla\!\cdot\bm\beta)\,wv\,\mathrm d\bm x
	- \sum_{C\in\mathcal{T}_h}\!\!\int_{\partial C}\!|\bm\beta\!\cdot\bm n|
	\,\llbracket w\rrbracket\,\llbracket v\rrbracket\,\mathrm ds.
	$
	The face term can be estimated following the same procedure as in \cite{xu2019l2}, while the cell integrals are directly bounded by $C\|w\|\|v\|$. Therefore,
	\[
	\Big|\tau\!\!\sum_{0\le l_1,l_2\le ms-1}\! b_{l_1l_2}^{(L)}\,
	H\!\big(\mathbb{D}_{l_1}u_h^{\,n},\mathbb{D}_{l_2}u_h^{\,n}\big)\Big|
	\;\le\; C\,(\tau Q_2(\lambda)+\lambda ^{\min\{2\rho+1,2L\}}\,Q_3(\lambda))\,\|u_h^{\,n}\|^2.
	\]
	Therefore,
	$
	\|u_h^{n+m}\|^2-\|u_h^{n}\|^2 \;\le\; C\,\tau\,Q(\lambda)\,\|u_h^{n}\|^2 \;+\; C\,\lambda^{\gamma}\,\|u_h^{n}\|^{2} \;+\; C\,\tau\sum_{l=0}^{ms-1}\|g^{n,l+1}\|^{2},
	$
	where $\lambda:=\tau/h$, $Q$ is a polynomial, and $\gamma$ is the same exponent as in the constant-coefficient analysis {in \cite{xu2019l2}}. With the scaling $\tau\le C\,h^{{\gamma/(\gamma-1)}}$ {and $\kappa:=\frac{\gamma}{\gamma-1}$}, the first two terms are absorbed into a linear growth factor, yielding
	\[
	\|u_h^{n+m}\|^2-\|u_h^{n}\|^2 \;\le\; C_{\mathrm s}\,\tau\,\|u_h^{n}\|^{2} \;+\; C\,\tau\sum_{l=0}^{ms-1}\|g^{n,l+1}\|^{2}.
	\] 
\end{proof}

This lemma underpins the fully discrete error estimate and the superconvergence analysis.	\subsection{Optimal (discrete shifting) error estimate}

To prove Proposition~\ref{prop:opterror} and Proposition~\ref{prop:deltaerr}, two additional ingredients are required: (i) a reference solution, defined exactly as in the constant-coefficient case of Section~3.1 with the spatial operator replaced accordingly, thus omitted here. And (ii) a suitable projection operator with frozen velocity, which is the key to optimal error bounds and superconvergence.

\subsubsection{A locally frozen-direction projection}\label{subsec:VC:proj}
Because the flux is linear with spatially varying coefficients, directly incorporating the nonuniform speed $\bm{\beta}(\bm{x})$ into the projection is delicate for well-posedness in two dimensions. A standard remedy (see, e.g., \cite{liu2020optimal,U_Cao_2025,jiao2022optimal}) is to freeze the direction locally, replacing $\bm{\beta}(\bm{x})$ by a cellwise representative (such as an outflow edge average or a point value), and to define the projection with respect to this locally frozen velocity. For convenience, in one dimension we may also freeze the wave speed cellwise (this is not strictly necessary; see \cite{cao2018superconvergence}), in which case the projection coincides with the right Gauss--Radau projection. We now detail the two-dimensional construction.

\paragraph{Two-dimensional projection}
For each cell $C_{i,j}$, define the cellwise frozen velocity
\[
\hat{\bm{\beta}}_{i,j} = \bigl(\hat{{\beta}}_{i,j}^1,\hat{{\beta}}_{i,j}^2\bigr),\qquad 
\hat{{\beta}}_{i,j}^1:= \frac{1}{|I_i|}\int_{I_i}\beta_1(x,y_{j+\frac12})\,\mathrm{d}x,\quad
\hat{{\beta}}_{i,j}^2:= \frac{1}{|J_j|}\int_{J_j}\beta_2(x_{i+\frac12},y)\,\mathrm{d}y.
\]
The cellwise frozen velocity satisfies $\|\boldsymbol\beta-\hat{\boldsymbol\beta}\|_{L^\infty}=O(h)$.

\begin{definition}[2D frozen-direction projection \cite{liu2020optimal}]\label{def:2Dproj_linvar}
	For $\omega\in H^1(\Omega)$, the function $\Pe (\omega)\in \mathbb{V}_h^{k}$ is defined cellwise by
	\begin{equation*}
		\begin{cases}
			\ipproj{\Pe (\omega)}{v}_{i,j} = \ipproj{\omega}{v}_{i,j}, & \forall\, v\in \mathbb{P}^{k}(C_{i,j}),\ \forall\, i,j,\\[2pt]
			\ip{\Pe (\omega)}{1}_{i,j} = \ip{\omega}{1}_{i,j}, & \forall\, i,j,
		\end{cases}
	\end{equation*}
	where the bilinear form with frozen velocity is
	\begin{equation}\label{equ:operatorP_linvar}
		\begin{aligned}
			\ipproj{w}{v}_{i,j}
			&:= -\!\ip{w}{\,\hat{\bm\beta}_{i,j}\!\cdot\nabla v\,}_{i,j}
			+ \int_{I_i}\hat{\beta}^{2}_{i,j}\,w(x,y_{j+\frac12}^{-})
			\bigl(v(x,y_{j+\frac12}^{-})-v(x,y_{j-\frac12}^{+})\bigr)\,dx \\
			&\quad + \int_{J_j}\hat{\beta}^{1}_{i,j}\,w(x_{i+\frac12}^{-},y)
			\bigl(v(x_{i+\frac12}^{-},y)-v(x_{i-\frac12}^{+},y)\bigr)\,dy .
		\end{aligned}
	\end{equation}
\end{definition}

\begin{remark}[Projection with frozen velocity]
	Using a cellwise constant weight $\hat{\boldsymbol\beta}_{i,j}$ does not change
	well-posedness, the standard approximation properties, or the $L^\infty$-stability
	of $\widetilde P$. Routine proofs are omitted.
\end{remark}

Moreover, the key identities \eqref{equ:up1}-\eqref{equ:up2} remain valid for the frozen-velocity projection in Lemma~\ref{lem:sturcture_pre_linvar} with identical proof, which implies \eqref{equ:consttest_linvar} in Lemma~\ref{lem:upwind-frozen}.
\begin{lemma}[Edge-average preservation under frozen velocity]\label{lem:sturcture_pre_linvar}
	Let $\Pe$ be the frozen-direction projection in \cref{def:2Dproj_linvar} with cellwise constant
	$\hat{\bm{\beta}}_{i,j}=(\hat\beta^1_{i,j},\hat\beta^2_{i,j})$. Then for every $\omega\in H^1(\Omega)$,
	on each cell $C_{i,j}=I_i\times J_j$,
	\begin{align*}
		\int_{J_j} \Pe(\omega)(x_{i+\frac12}^-,y)\,dy = \int_{J_j} \omega(x_{i+\frac12}^-,y)\,dy, \quad 
		\int_{I_i} \Pe(\omega)(x,y_{j+\frac12}^-)\,dx = \int_{I_i} \omega(x,y_{j+\frac12}^-)\,dx .
	\end{align*}
\end{lemma}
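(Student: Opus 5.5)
The plan is to test the defining relation $\ipproj{\Pe(\omega)}{v}_{i,j}=\ipproj{\omega}{v}_{i,j}$ of \Cref{def:2Dproj_linvar} with the two local linear functions
\[
v_x=\frac{2}{h_x}\Bigl(x-\tfrac{x_{i-\frac12}+x_{i+\frac12}}{2}\Bigr),\qquad
v_y=\frac{2}{h_y}\Bigl(y-\tfrac{y_{j-\frac12}+y_{j+\frac12}}{2}\Bigr),
\]
exactly as was done for \Cref{prop:structurepreserve_proj}, and to check that freezing the velocity cellwise does not spoil the argument. Since $\hat{\bm\beta}_{i,j}$ is constant on $C_{i,j}$, we have $\nabla v_x=(2/h_x,0)$ and $\hat{\bm\beta}_{i,j}\cdot\nabla v_x=2\hat\beta^1_{i,j}/h_x$, a constant. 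Hence the volume term in \eqref{equ:operatorP_linvar} becomes $-(2\hat\beta^1_{i,j}/h_x)\ip{w}{1}_{i,j}$.

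For the boundary terms with $v=v_x$: the $y$-edge term vanishes because $v_x(x,y_{j+\frac12}^-)-v_x(x,y_{j-\frac12}^+)=0$. The $x$-edge term gives $v_x(x_{i+\frac12}^-)-v_x(x_{i-\frac12}^+)=1-(-1)=2$. Therefore
\[
\ipproj{w}{v_x}_{i,j}=-\frac{2\hat\beta^1_{i,j}}{h_x}\ip{w}{1}_{i,j}+2\hat\beta^1_{i,j}\int_{J_j}w(x_{i+\frac12}^-,y)\,dy .
\]

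Apply this with $w=\Pe(\omega)$ and $w=\omega$ and subtract. The volume terms cancel by the cell-average constraint $\ip{\Pe(\omega)}{1}_{i,j}=\ip{\omega}{1}_{i,j}$. What remains is $2\hat\beta^1_{i,j}\int_{J_j}(\Pe(\omega)-\omega)(x_{i+\frac12}^-,y)\,dy=0$. Since $\hat\beta^1_{i,j}>0$, this follows from $\beta_1>0$: a positive function averaged over $I_i$ is positive. We can divide by it and obtain the first identity.

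The test function $v_y$ gives the second identity symmetrically, using $\hat\beta^2_{i,j}>0$. The degree $k\ge1$ is needed so that $v_x,v_y\in\mathbb P^k(C_{i,j})$, which holds under the standing assumption $k\ge d/2=1$.

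The only potential obstacle is making sure the frozen coefficient factors out of every term and is nonzero. This is where we use that $\hat{\bm\beta}_{i,j}$ is cellwise constant rather than the pointwise $\bm\beta(\bm x)$. With the true variable $\bm\beta$, the volume term would involve $\ip{w}{\beta_1}_{i,j}$, and the cell-average constraint would no longer cancel it. The positivity of the frozen components follows directly from the fixed wind-direction assumption.
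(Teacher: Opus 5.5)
Your proposal is correct and takes the same route as the paper. The paper proves this lemma ``with identical proof'' to \Cref{prop:structurepreserve_proj}: it tests the defining relation with the two local linear functions in $x$ and $y$ and cancels the volume term using the cell-average constraint. Your explicit check that $\hat\beta^1_{i,j},\hat\beta^2_{i,j}$ are constant on the cell and strictly positive, so they factor out and can be divided away, supplies the detail the paper leaves implicit.
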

Together with Lemma~\ref{lem:H-consistency} and Proposition~\ref{prop:super-const-test}, these results guarantee that both the optimal convergence rate and the superconvergence rate are preserved despite the coefficient-freezing error in our analytical framework.

\begin{lemma}[{Supercloseness for piecewise constants}]\label{lem:upwind-frozen}
	For every piecewise constant test function $v$ with $v|_{K}\in\mathbb P^{0}(K)$ for all
	$K\in\mathcal T_h$,
	\begin{equation}\label{equ:consttest_linvar}
		H_{\hat{\boldsymbol\beta}}(\eta,v)=0.
	\end{equation}
	Moreover, for all $v\in\mathbb V_h^{k}$,
	$
	\big|H_{\hat{\boldsymbol\beta}}(\eta,v)\big|\le C\,h^{k+1}\,\|v\|.
	$
	The observation of \eqref{equ:diffstru} remains true for $H_{\hat{\bm{\beta}}}$    \begin{equation}\label{equ:diffstru_linvar}
		\begin{aligned}
			-H_{\hat{\bm{\beta}}}(\omega,v) - \sum_{ij}\ipproj{\omega}{v}_{ij} & =  \sum_j\int_{y_{j-\frac{1}{2}}}^{y_{j+\frac{1}{2}}} \Delta_x({\hat{\beta}^1\omega})\left( x_{i-\frac{1}{2}}^-,y \right)\, v\left( x_{i-\frac{1}{2}}^+,y \right) \, dy \\
			& + \sum_i\int_{x_{i-\frac{1}{2}}}^{x_{i+\frac{1}{2}}} \Delta^y({\hat{\beta}^2\omega})\left( x,y_{j-\frac{1}{2}}^- \right)\, v\left( x,y_{j-\frac{1}{2}}^+ \right) \, dx.
		\end{aligned}
	\end{equation}
	Here $H_{\hat{\boldsymbol\beta}}$ is obtained from $H$ by replacing
	$\boldsymbol\beta$ with its cellwise frozen approximation $\hat{\boldsymbol\beta}$.
\end{lemma}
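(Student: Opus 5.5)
The plan is to prove the three claims of Lemma~\ref{lem:upwind-frozen} in the order: first the shifting identity \eqref{equ:diffstru_linvar}, then \eqref{equ:consttest_linvar}, then the $h^{k+1}$ bound. The first two are exact algebraic identities. The last follows from the local $L^\infty$-stability and approximation of the frozen projection. Throughout, $H_{\hat{\bm\beta}}$ is understood with the cellwise frozen weights. That is, on $C_{i,j}$ the volume term is $(\hat{\bm\beta}_{i,j}\omega,\nabla v)_{i,j}$, and the upwind flux on an inflow edge carries the frozen coefficient of the upwind (neighbouring) cell.

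\textbf{Step 1: the shifting identity.} I will expand $-H_{\hat{\bm\beta},i,j}(\omega,v)-\ipproj{\omega}{v}_{i,j}$ for $v\in\mathbb P^k(C_{i,j})$ using \eqref{equ:operatorP_linvar}.

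The volume terms $\mp(\hat{\bm\beta}_{i,j}\omega,\nabla v)_{i,j}$ cancel. The outflow contributions $\hat\beta^1_{i,j}\omega(x_{i+\frac12}^-,y)v(x_{i+\frac12}^-,y)$ also cancel. What survives on the left edge is
\[
\int_{J_j}\bigl[\hat\beta^1_{i,j}\omega(x_{i+\frac12}^-,y)-\hat\beta^1_{i-1,j}\omega(x_{i-\frac12}^-,y)\bigr]v(x_{i-\frac12}^+,y)\,dy .
\]
Since $x_{i+\frac12}^-=x_{i-\frac12}^-+h_x$, this is exactly $\Delta_x(\hat\beta^1\omega)(x_{i-\frac12}^-,y)$ in the shift notation, with $\hat\beta^1$ regarded as a piecewise constant function. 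The $y$-edges are handled identically. Summing over cells gives \eqref{equ:diffstru_linvar}.

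\textbf{Step 2: piecewise constant test functions.} This is the bookkeeping point I expect to need the most care: one must check that the frozen coefficient appearing in the upwind flux of $H_{\hat{\bm\beta}}$ matches the one used in the neighbour's projection. I will apply Step 1 with $\omega=\eta=U-\Pe U$ and $v\equiv1$ on $C_{i,j}$.

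Alternatively, I can argue directly. Since $\nabla v=0$, we have $H_{\hat{\bm\beta},i,j}(\eta,1)$ equal to the sum of $\pm\hat\beta^{1}\int_{J_j}\eta(x_{i\pm\frac12}^-,y)\,dy$ and the analogous $y$-edge terms, each taken with the frozen coefficient of the upwind cell. Every edge integral that appears is an outflow-edge integral of some cell, taken with that cell's own frozen constant. By Lemma~\ref{lem:sturcture_pre_linvar}, each such outflow-edge average of $\eta$ vanishes. Because the frozen coefficients are constant on each edge, they factor out, and hence $H_{\hat{\bm\beta}}(\eta,v)=0$. This is precisely why $\hat\beta^1_{i,j}$ is defined as an average along the outflow edge of that cell.

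\textbf{Step 3: the general bound.} For general $v\in\mathbb V_h^k$, I will bound $|H_{\hat{\bm\beta}}(\eta,v)|$ termwise, as in Lemma~\ref{lmm:2D-proj-property}:
\begin{itemize}
\item the volume term is at most $C\|\eta\|\,\|\nabla v\|\le Ch^{k+1}h^{-1}\|v\|$;
\item the edge terms are at most $C\|\eta\|_{L^2(\partial)}\|v\|_{L^2(\partial)}\le Ch^{k+1/2}\,h^{-1/2}\|v\|$, using the trace inverse inequality together with the $L^\infty$-stability and approximation of $\Pe$.
\end{itemize}
This crude estimate gives only $Ch^{k}\|v\|$, so the extra power must come from orthogonality.

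To recover it, I decompose $v=P^{(0)}v+(v-P^{(0)}v)$. The first part contributes zero by Step 2. For the second, I use the defining relation $\ipproj{\eta}{v}_{i,j}=0$ for all $v\in\mathbb P^k$. Adding this relation to $H_{\hat{\bm\beta}}$ converts the sum, via Step 1, into the jump form \eqref{equ:diffstru_linvar} with $\omega=\eta$. The result is bounded by
\[
Ch^{1/2}\bigl(\|\Delta_x(\hat\beta^1\eta)\|_{L^2(\text{edges})}+\|\Delta^y(\hat\beta^2\eta)\|_{L^2(\text{edges})}\bigr)\,h^{-1}\|v\|.
\]
I will then write
\[
\Delta_x(\hat\beta^1\eta)=\hat\beta^1\Delta_x\eta+(\Delta_x\hat\beta^1)\,\eta(\cdot+h_x).
\]
The coefficient difference satisfies $\Delta_x\hat\beta^1=O(h)$ since $\bm\beta$ is smooth. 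The shifted error satisfies $\|\Delta_x\eta\|=O(h^{k+2})$, as in \eqref{eq:0720-1}; this holds because the frozen projection commutes with shifts up to an $O(h)$ perturbation of its coefficients, and that perturbation is handled by Proposition~\ref{prop:welldefiness} applied to the difference of the two projections.

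Combining these bounds yields $|H_{\hat{\bm\beta}}(\eta,v)|\le Ch^{k+1}\|v\|$. This shift-commutation estimate for a projection with a varying frozen coefficient is the main obstacle. My first attempt at it will be to write $\Pe_{i+1,j}-\Pe_{i,j}$ as the solution of the system of Proposition~\ref{prop:welldefiness} with right-hand side $(\hat{\bm\beta}_{i+1,j}-\hat{\bm\beta}_{i,j})$ times an $O(h^{k+1})$ defect.
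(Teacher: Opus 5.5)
Your proposal is correct and follows the route the paper indicates. The shift identity is the same cellwise algebra as \eqref{equ:diffstru}, and \eqref{equ:consttest_linvar} follows from outflow-edge-average preservation (Lemma~\ref{lem:sturcture_pre_linvar}) once the cellwise-constant $\hat{\bm\beta}$ is factored out of each edge integral; only this cellwise constancy matters, not the particular edge-averaging used to define $\hat{\bm\beta}$, despite your aside. For the $h^{k+1}$ bound you are more careful than the paper, which simply appeals to \eqref{eq:0720-1}: your estimate of $\Pe_{\hat{\bm\beta}_{i+1,j}}-\Pe_{\hat{\bm\beta}_{i,j}}$ via Proposition~\ref{prop:welldefiness}, driven by an $O(h)\times O(h^{k+1})$ defect, is exactly what is needed, since the frozen projection does not commute with shifts.
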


\begin{lemma}[Estimate of approximate spatial operator]\label{lem:H-consistency}
	Since $\|\boldsymbol\beta-\hat{\boldsymbol\beta}\|_{L^\infty}=O(h)$, the approximate spatial operator satisfies, for any $w,v\in\mathbb V_h^{k}$,
	\[
	\big|\,H(w,v)-H_{\hat{\boldsymbol\beta}}(w,v)\,\big|
	\;\le\; C\,\|\boldsymbol\beta-\hat{\boldsymbol\beta}\|_{L^\infty(\Omega)}\,h^{-1}\,\|w\|\,\|v\|
	\;\le\; C\,\|w\|\,\|v\|.
	\]
\end{lemma}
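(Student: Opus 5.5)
The statement just above is Lemma~\ref{lem:H-consistency}: for $w,v\in\mathbb V_h^k$, $|H(w,v)-H_{\hat{\bm\beta}}(w,v)|\le C\|\bm\beta-\hat{\bm\beta}\|_{L^\infty}h^{-1}\|w\|\|v\|$. The plan is to use the fact that $H$ is linear in the velocity field. Subtracting the two forms cellwise, the difference $H_{i,j}(w,v)-(H_{\hat{\bm\beta}})_{i,j}(w,v)$ equals the expression for $H_{i,j}$ with $\bm\beta$ replaced by $\bm\beta-\hat{\bm\beta}$. Note that $\hat{\bm\beta}$ is cellwise constant, so the upwind trace $(\hat\beta^1 w)(x_{i-\frac12}^-,y)$ uses the frozen value $\hat\beta^1_{i-1,j}$ from the upwind cell. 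The difference therefore consists of three pieces: a volume term $\ip{(\bm\beta-\hat{\bm\beta})w}{\nabla v}_{i,j}$, inflow-face terms such as $\int (\beta_1-\hat\beta^1_{i-1,j})w(x_{i-\frac12}^-,y)v(x_{i-\frac12}^+,y)\,dy$, and outflow-face terms with the weight $(\beta_1-\hat\beta^1_{i,j})$ evaluated on the edge.

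Each piece is bounded by $\|\bm\beta-\hat{\bm\beta}\|_{L^\infty}$ times a bilinear integral of $w$ and $v$. Three facts are needed here. First, $\hat{\bm\beta}$ approximates $\bm\beta$ to $O(h)$ uniformly on the cell and on its boundary; for the upwind value $\hat\beta^1_{i-1,j}$ this holds because $\beta$ is Lipschitz and neighboring cells are within distance $h$. Second, for the volume term, Cauchy--Schwarz and the inverse estimate \eqref{eq:inveq} give $\|w\|_{i,j}\,\|\nabla v\|_{i,j}\le Ch^{-1}\|w\|_{i,j}\|v\|_{i,j}$. Third, for the face terms, the polynomial trace inequality $\|w\|_{L^2(e)}\le Ch^{-1/2}\|w\|_{K}$ on both the upwind cell and the current cell yields the bound $Ch^{-1}\|w\|_{K'}\|v\|_{K}$.

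Summing over cells and applying discrete Cauchy--Schwarz then gives the first inequality. Each cell couples only with its two upwind neighbors, so summing $\|w\|_{K'}^2$ over cells counts each cell a bounded number of times. Finally, substituting $\|\bm\beta-\hat{\bm\beta}\|_{L^\infty}\le Ch$ gives the second inequality, $|H(w,v)-H_{\hat{\bm\beta}}(w,v)|\le C\|w\|\|v\|$.

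The only delicate point is the bookkeeping of the frozen velocity on inflow edges, since it belongs to the neighboring cell. The sup-norm bound must be read as controlling $|\bm\beta(\bm x)-\hat{\bm\beta}_K|$ for $\bm x$ in the closure of each cell adjacent to $K$ across an inflow edge. I would state this explicitly from the smoothness of $\bm\beta$: $|\beta_1(x,y)-\hat\beta^1_{i,j}|\le \|\nabla\beta_1\|_{L^\infty}\cdot 2h$ on $\overline{C_{i,j}}\cup\overline{C_{i+1,j}}$. Everything else is routine inverse and trace estimates on a uniform Cartesian mesh.
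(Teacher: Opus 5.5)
Your proposal is correct and follows essentially the route the paper intends. The paper gives no separate proof: the lemma is the weak-boundedness estimate of Lemma~\ref{lem:VC:weakbdd} applied, by linearity of $H$ in the velocity, to the form with velocity $\bm\beta-\hat{\bm\beta}$, and your cellwise volume/inflow/outflow decomposition with inverse and trace inequalities carries this out explicitly. Your extra bookkeeping of the upwind frozen value $\hat\beta^1_{i-1,j}$ on inflow edges is harmless but not strictly needed: the relevant factor there is the trace of $\bm\beta-\hat{\bm\beta}$ from the upwind cell, which the $L^\infty$ bound already controls by continuity of $\bm\beta$.
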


However, the identity \eqref{eq:0721-4} in Proposition~\ref{prop:structurepreserve_proj}—which is crucial for showing that no correction is needed when the test function is piecewise constant—no longer holds in linear variable-coefficient advection equations. We can still obtain a supercloseness property when the test functions are piecewise constants.

\begin{proposition}[Supercloseness for piecewise constants]\label{prop:super-const-test}
	For every piecewise constant
	$v$ with $v|_{K}\in\mathbb P^{0}(K)$ for all $K\in\mathcal T_h$,
	$
	\big|H(\eta,v)\big|\le C\,h^{k+2}\,\|v\|.
	$
\end{proposition}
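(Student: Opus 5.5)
The argument reduces the claim to the exact identity of Lemma~\ref{lem:upwind-frozen} plus a coefficient-freezing error that gains an extra power of $h$ from the cellwise constancy of $v$.

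Fix a piecewise constant $v$ and write $v|_{C_{i,j}}=v_{i,j}$. Since $\nabla v=0$ cellwise, the volume term in $H_{i,j}(\eta,v)$ vanishes. Only the upwind face integrals remain, and these involve the outflow traces $(\beta_1\eta)(x_{i\pm\frac12}^-,y)$ and $(\beta_2\eta)(x,y_{j\pm\frac12}^-)$.

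After summation by parts over cells, each vertical edge $x=x_{i+\frac12}$, $y\in J_j$, contributes
\[
\int_{J_j}\beta_1(x_{i+\frac12},y)\,\eta(x_{i+\frac12}^-,y)\,\mathrm{d}y\,\bigl(v_{i+1,j}-v_{i,j}\bigr).
\]
Horizontal edges contribute the analogous term.

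The same computation applies to $H_{\hat{\bm\beta}}(\eta,v)$, with $\beta_1$ on that edge replaced by the constant $\hat\beta^1_{i,j}$. By Lemma~\ref{lem:sturcture_pre_linvar}, $\int_{J_j}\eta(x_{i+\frac12}^-,y)\,\mathrm{d}y=0$, which is exactly \eqref{equ:consttest_linvar}. Subtracting,
\[
H(\eta,v)=H(\eta,v)-H_{\hat{\bm\beta}}(\eta,v)=\sum_{i,j}\int_{J_j}\bigl(\beta_1(x_{i+\frac12},y)-\hat\beta^1_{i,j}\bigr)\,\eta(x_{i+\frac12}^-,y)\,\mathrm{d}y\,\bigl(v_{i+1,j}-v_{i,j}\bigr)+(\text{horizontal edges}).
\]

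We bound each edge term directly.
\begin{itemize}
\item Smoothness of $\bm\beta$ gives $|\beta_1-\hat\beta^1_{i,j}|\le Ch$ on the outflow edge and on $C_{i,j}$.
\item By the trace inequality and Lemma~\ref{lmm:2D-proj-property}, $\|\eta\|_{L^1(e)}\le Ch^{1/2}\|\eta\|_{L^2(e)}\le Ch^{1/2}\cdot h^{k+1/2}\|U\|_{W^{k+1,\infty}}=Ch^{k+1}$ per edge.
\item The naive bound $|v_{i+1,j}-v_{i,j}|\le |v_{i+1,j}|+|v_{i,j}|\le Ch^{-1}(\|v\|_{i+1,j}+\|v\|_{i,j})$ uses $\|v\|_{i,j}=h|v_{i,j}|$ in 2D.
\end{itemize}
Each edge term is then at most $Ch\cdot h^{k+1}\cdot h^{-1}\|v\|_{\text{local}}=Ch^{k+1}\|v\|_{\text{local}}$. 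Summing over $O(h^{-2})$ edges with Cauchy--Schwarz gives $Ch^{k+1}\cdot h^{-1}\|v\|$, which is one power short of the target.

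The main obstacle is recovering this lost power. The extra factor must come from the smallness of $\eta$'s edge trace in a weighted sense, not from $v$. The plan is to expand $\beta_1(x_{i+\frac12},y)-\hat\beta^1_{i,j}$ around the edge midpoint:
\begin{itemize}
\item The constant part is $O(h^2)$ if $\hat\beta$ were the edge average of $\beta_1$. As defined, $\hat\beta^1_{i,j}$ is a horizontal average, so it differs from the value at the edge midpoint by $O(h)$ but is constant along the edge. Any constant part is therefore annihilated by the zero edge average of $\eta$.
\item What remains is the linear-in-$y$ part, $\partial_y\beta_1\,(y-y_j)=O(h)$ times a function with zero mean. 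It pairs with $\eta$ to give $\int_{J_j}(y-y_j)\,\eta\,\mathrm{d}y$.
\end{itemize}

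To control this weighted moment, I would use the first equation of Definition~\ref{def:2Dproj_linvar} with the test function $v=(y-y_j)$. This expresses the moment through the cell integral of $\eta$ and the top-edge integrals. With $\ip{\eta}{1}_{i,j}=0$, the expected outcome is that $\int_{J_j}(y-y_j)\,\eta\,\mathrm{d}y$ equals interior moments of $\eta$ against linears, which are $O(h^{k+2})\cdot h$ by the superconvergence of $\Pe$. If this fails, I would instead treat $\int_{J_j}(y-y_j)\,\eta\,\mathrm{d}y$ crudely: the weight contributes $h$, so the term is $Ch\cdot h\cdot h^{k+1}\cdot h^{-1}\|v\|_{\text{local}}$. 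Summed, this gives $h^{k+2}\cdot h^{-1}\|v\|$ again. To close the remaining power, one can use the fact that $\sum_{j}$ of the $O(h^2)$ quadratic remainder times $\|\eta\|_{L^1(e)}$ already suffices.

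Finally, the horizontal edges are handled identically with $\partial_x\beta_2$ and the test function $(x-x_i)$. Combining both directions yields $|H(\eta,v)|\le Ch^{k+2}\|v\|$.
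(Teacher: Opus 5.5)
The power you lose is never recovered, so the proposal has a genuine gap. Your reduction itself is correct: the summation by parts, the vanishing of $H_{\hat{\bm\beta}}(\eta,v)$, and the observation that any part of $\beta_1-\hat\beta^1_{i,j}$ constant along the edge is killed by the zero edge average of $\eta$ (Lemma~\ref{lem:sturcture_pre_linvar}).

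After the reduction, the vertical-edge part is $\sum_{i,j}c_{i,j}(v_{i+1,j}-v_{i,j})$ with $c_{i,j}=\int_{J_j}\bigl(\beta_1(x_{i+\frac12},y)-\beta_1(x_{i+\frac12},y_j)\bigr)\,\eta(x_{i+\frac12}^-,y)\,\mathrm{d}y$. Generically $c_{i,j}$ has exact size $h^{k+3}$, because the first moment of $\eta$ on an outflow edge is not superconvergent. For $k=1$, the projection on $[-1,1]^2$ is fixed by the cell average and the two outflow-edge averages. Then $\omega=y^2$ gives $\Pe\omega=\frac13+\frac23y$ and $\int_{-1}^{1}y\,(\omega-\Pe\omega)(1,y)\,\mathrm{d}y=-\frac49\neq0$.

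Your proposed tool cannot show otherwise. With test function $y-y_j$, the right-edge term of $\mathbb{P}_h^*$ carries the factor $v(x_{i+\frac12}^-,y)-v(x_{i-\frac12}^+,y)=0$. The other two terms vanish by cell-average and top-edge-average preservation, so the identity reduces to $0=0$; the same happens with $x-x_i$ on horizontal edges. No superconvergence of interior moments of $\eta$ is available either, since Lemma~\ref{lmm:2D-proj-property} gives only $O(h^{k+1})$.

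Since $v$ is an arbitrary piecewise constant, $v_{i+1,j}-v_{i,j}$ has no smallness. So every edge-by-edge bound stops at $Ch^{k+1}\|v\|$, and that is also what your fallback gives. The remark about the $O(h^2)$ remainder only handles the quadratic part, not the linear-in-$y$ part. There is also a bookkeeping slip: $\|\eta\|_{L^1(e)}\le|e|\,\|\eta\|_{L^\infty(e)}\le Ch^{k+2}$, not $Ch^{k+1}$. With the correct count the direct bound is exactly one power short, whereas your own numbers would give $Ch^{k}\|v\|$.

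The missing idea is that the gain lies in how $c_{i,j}$ varies between neighbouring edges, i.e.\ in discrete shifts of $\eta$. This is what the paper uses. Instead of moving the difference onto $v$, it keeps it on $\eta$: for piecewise constant $v$, the flux terms on $C_{i,j}$ are $-v_{i,j}\int_{J_j}\Delta_x\bigl((\beta_1-\hat\beta^1)\eta\bigr)(x_{i-\frac12}^-,y)\,\mathrm{d}y$ plus the $\Delta^y$ analogue. This yields $|H(\eta,v)-H_{\hat{\bm\beta}}(\eta,v)|\le Ch^{-1}\|\bm\beta-\hat{\bm\beta}\|_{L^\infty}\|v\|\bigl(\|\Delta_x\eta\|+\|\Delta^y\eta\|\bigr)$. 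The shift estimate $\|\Delta_x\eta\|+\|\Delta^y\eta\|\le Ch^{k+2}$ from \eqref{eq:0720-1} then supplies the missing power. The shift also falls on $\beta_1-\hat\beta^1$, but that difference changes by only $O(h^2)$ between neighbouring cells, so the extra term is harmless.

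Your argument can be repaired the same way. Sum by parts back: $\sum_{i,j}c_{i,j}(v_{i+1,j}-v_{i,j})=-\sum_{i,j}v_{i,j}(c_{i,j}-c_{i-1,j})$. Then $c_{i,j}-c_{i-1,j}=O(h^{k+4})$, because your constant-free weight changes by $O(h^2)$ under an $x$-shift and $\Delta_x\eta=O(h^{k+2})$ pointwise. Hence the sum is bounded by $Ch^{k+4}\sum_{i,j}|v_{i,j}|\le Ch^{k+2}\|v\|$.
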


\begin{proof}
	In the constant–coefficient case, \eqref{equ:consttest_linvar} implies that no correction is needed for piecewise constant $v$. When $\bm{\beta}$ varies in space, the flux contributions on a cell $C_{i,j}=I_i\times J_j$ do not cancel, and for general $w$ and piecewise constant $v$ one has
	\[
	\begin{aligned}
		H(w,v)
		&= \int_{I_i}\!\Big(\beta_2(x,y_{j+\frac12})\,w(x,y_{j+\frac12}^{-})
		- \beta_2(x,y_{j-\frac12})\,w(x,y_{j-\frac12}^{-})\Big)\,dx \\
		&\quad + \int_{J_j}\!\Big(\beta_1(x_{i+\frac12},y)\,w(x_{i+\frac12}^{-},y)
		- \beta_1(x_{i-\frac12},y)\,w(x_{i-\frac12}^{-},y)\Big)\,dy \;\neq\;0 .
	\end{aligned}
	\]
	Introduce the frozen–velocity DG operator $H_{\hat{\boldsymbol\beta}}$ obtained from $H$ by replacing
	$\bm{\beta}$ with its cellwise frozen approximation $\hat{\boldsymbol\beta}$. Then
	$
	H(\eta,v)={H(\eta,v)-H_{\hat{\boldsymbol\beta}}(\eta,v)}
	\;+\;{H_{\hat{\boldsymbol\beta}}(\eta,v)}.
	$
	
	The second term vanishes by Lemma~\ref{lem:upwind-frozen} for piecewise constant $v$.
	For the first term, 
	$|H(\eta,v)-H_{\hat{\bm{\beta}}}(\eta,v)| \leq Ch^{-1}\|\bm{\beta}-\hat{\bm{\beta}}\|_{L^{\infty}}\|v\|(\|\Delta_x \eta\|+\|\Delta^y \eta\|)\leq Ch^{k+2}\|v\|.$
	The last bound follows from $\|\bm{\beta}-\hat{\bm{\beta}}\|_{L^{\infty}}=\mathcal O(h)$ together with \eqref{eq:0720-1}.
	This proves the desired supercloseness for piecewise constant test functions.
	
\end{proof}

\subsubsection{Proof of Proposition~\ref{prop:opterror} and Proposition~\ref{prop:deltaerr}}
In this setting, it suffices to prove Proposition~\ref{prop:opterror}. The proof of Proposition~\ref{prop:deltaerr} follows from Proposition~\ref{prop:opterror} with only minor modifications, the main difference from the linear case being an additional local variation of the wave speed \(\mathbf{\beta}\), which is of order \(O(h)\).

Proceeding as the constant-coefficient case (Theorem~\ref{thm:superconvergence}), for all \(v\in\mathbb V_h^k\), the stage error
\(\zeta_{\sigma}^{n,\ell}:=u_{\sigma}^{n,\ell}-\Pe U^{n,\ell}\) satisfies, 
\begin{equation}\label{eq:VC:error}
	\big(\zeta_{\sigma}^{n,\ell+1},v\big)
	= \sum_{m=0}^{\ell}\!\Big[c_{\ell m}\big(\zeta_{\sigma}^{n,m},v\big)
	+ \tau\, d_{\ell m}\, H\big(\zeta_{\sigma}^{n,m},v\big)\Big]
	+ \tau\,\mathcal{Y}^{n,\ell+1}(v)
	+ \tau\,\mathcal{Z}^{n,\ell+1}(v)
	+ \tau\,\mathcal{X}^{n,\ell+1}(v),
\end{equation}
where
\[
\mathcal{Y}^{n,\ell+1}(v) := \tau^{-1}\big(\mathcal{F}_{\hat{\tau}^{\ell+1}}u_{h}^{n,\ell+1}-u_h^{n,\ell+1},\,v\big),\qquad
\mathcal{Z}^{n,\ell+1}(v):= (\eta_c^{n,\ell+1},v)-H_{\hat{\bm{\beta}}}(\eta_d^{n,\ell+1},v)-(\rho^{n,\ell+1},v),
\]
\[
\mathcal{X}^{n,\ell+1}(v) := H_{\hat{\bm{\beta}}}(\eta_d^{n,\ell+1},v)-H(\eta_d^{n,\ell+1},v).
\]
\paragraph{Estimate of $\mathcal{X}^{n,\ell+1}$}
Lemma~\ref{lem:H-consistency} has shown
\[
\big|\mathcal{X}^{n,\ell+1}(v)\big|
\;\le\; C\,h^{-1}\,\|\bm{\beta}-\hat{\bm{\beta}}\|_{L^\infty(\Omega)}\,\|\eta_d^{n,\ell+1}\|\,\|v\|
\;\le\; C\, h^{k+1}\,\|v\|.
\]

Proceeding as in \cite{peng2025oscillation} and invoking Lemma~\ref{lem:VC:stab} in \eqref{eq:VC:error} yields
\[
\|\zeta^{\,n+1}\|^{2}
\;\le\; \bigl(1+C_s\tau\bigr)\,\|\zeta^{\,n}\|^{2}
\;+\; C\,\tau\Big(\tau^{r}+h^{k+1}\Big)\|\zeta^{\,n}\|,
\]
from which a discrete {Gr\"onwall} argument gives
$
\max_{0\le n\le \lfloor T/\tau\rfloor}\|\zeta^{\,n}\|
\;\le\; C\big(h^{k+1}+\tau^{r}\big),
$
using the initial bound $\|\zeta^{0}\|\le Ch^{k+1}$.

\subsection{Superconvergence analysis}
The extension of the superconvergence analysis to the variable-coefficient case introduces a new technical challenge. The use of a frozen-coefficient projection (Definition \ref{def:2Dproj_linvar}), while necessary for a well-posed definition, creates a consistency error between the true DG operator $H$ and the frozen velocity DG operator $H_{\hat{\beta}}$. This discrepancy introduces error terms that are of order $O(h^{k+1})$ in general, which contaminate the desired $O(h^{k+2})$ superconvergence estimate. The key to overcoming this difficulty lies in a refined analysis of these new error terms. We will show that for the crucial case of piecewise constant test functions, a supercloseness property (Proposition \ref{prop:super-const-test}) holds, demonstrating that these potentially destructive low-order errors are in fact one order higher than they appear. 

\subsubsection{Correction functions}
As in Section~\ref{sec:superconvergence}, write
$
Z^{n,m}=Z^{n,m}_{1}+Z^{n,m}_{2}.
$
In the variable-coefficient setting the operator $\mathbb{P}_h^*$ is modified by a
cellwise frozen weight. The 2D version is given in \eqref{equ:operatorP_linvar}. In 1D, for each $I_i$,
$
\ipproj{w}{v}_{i}
:= -\!\int_{I_i}\hat{\beta}_{i+\frac12}\,w\,v_x\,dx.
$ 
Before defining the correction functions, introduce the zero-mean subspaces
\[
\mathbb{P}_0^{k}(I_i)=\big\{v\in\mathbb{P}^{k}(I_i)\ \big|\ \ip{v}{1}_{i}=0\big\},
\qquad
\mathbb{P}_0^{k}(C_{i,j})=\big\{v\in\mathbb{P}^{k}(C_{i,j})\ \big|\ \ip{v}{1}_{i,j}=0\big\}.
\]

\begin{definition}[Correction functions]\label{def:correct_linvar} In 1D, for each cell $I_i$ define $Z^{n,m}_2\in\mathbb{V}_h^{k}$ by \[ \left\{ \begin{aligned} \ipproj{Z^{n,m}_2}{v}_{i} &= \ip{\dfrac{dU^{n,m}}{dx}-\Pe \!\left(\dfrac{dU^{n,m}}{dx}\right)}{v}_{i} + H_i(\eta^{n,m},v) \quad \forall\, v\in\mathbb{P}_0^{k}(I_i),\\ Z_{2}^{n,m}\!\left(x_{i+\frac12}^{-}\right)&=0. \end{aligned} \right. \] In 2D, for each cell $C_{i,j}$ define $Z^{n,m}_2\in\mathbb{P}_0^{k}$ by \[ \ipproj{Z^{n,m}_2}{v}_{i,j} = \ip{\bm{\beta}\!\cdot\nabla U^{n,m} - \Pe \!\big(\bm{\beta}\!\cdot\nabla U^{n,m}\big)}{v}_{i,j} + H_{i,j}(\eta^{n,m},v) \quad \forall\, v\in\mathbb{P}_0^{k}(C_{i,j}). \] And we follow the way in Definition~\ref{def:1Dcorrectfun} and \ref{def:2Dcorrectfun} to define $Z_1^{n,m}$. \end{definition}

Similar to Proposition~\ref{prop:est_correct}, the functions in Definition~\ref{def:correct_linvar} satisfy the bounds below; the proof follows the exact same steps and is omitted.
\begin{lemma}[Estimate of correction functions]\label{lem:est_correct_linvar}
	\[
	\|Z_d^{n,\ell}\|\le Ch^{k+2},
	\qquad
	\|\Delta_x Z_d^{n,\ell}\|+\|\Delta^y Z_d^{n,\ell}\|\le Ch^{k+3}.
	\]
\end{lemma}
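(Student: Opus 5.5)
The plan is to follow the proof of \Cref{prop:est_correct}, treating $Z_1$ and $Z_2$ separately and working on each cell with the $L^\infty$ stability of the frozen-velocity analogue of \Cref{prop:welldefiness}. \Cref{def:2Dproj_linvar} and the remark after it allow this, with constants depending on $\|\bm\beta\|_{W^{1,\infty}}$ and the lower bound $\min_i\beta_i>0$.

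\textbf{Size of $Z_d^{n,\ell}$.} The first claim reduces to bounding each $Z^{n,m}$, since $Z_d^{n,\ell}=\sum_{m}d_{\ell m}Z^{n,m}$ with bounded coefficients. The proof of \Cref{prop:est_correct} then applies cellwise.
\begin{itemize}
\item For $Z_1$: $\|Z_1^{n,m}\|_K\le Ch\|V^{n,m}\|_K$, and $\|V^{n,m}\|\le Ch^{k+1}$ by \Cref{lmm:OEerr} together with \eqref{eq:0708-1}.
\item For $Z_2$: represent the right-hand side by some $F\in\mathbb V_h^k$. The projection error term is $O(h^{k+1})$ by the approximation property of $\Pe$. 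The term $H_{i,j}(\eta^{n,m},\cdot)$ is $O(h^{k+1})$ after the splitting $H=H_{\hat{\bm\beta}}+(H-H_{\hat{\bm\beta}})$, using \Cref{lem:upwind-frozen} and \Cref{lem:H-consistency}.
\item The zero-mean constraint (in 1D, the outflow-point constraint) causes no trouble, because the test space is restricted to $\mathbb P_0^k$, where the solution operator is still uniformly bounded.
\end{itemize}
Multiplying by the factor $h$ gained from $L^\infty$ stability and the inverse estimate gives $\|Z_d^{n,\ell}\|\le Ch^{k+2}$. Any $\tau^r$ contribution from \Cref{lmm:OEerr} is absorbed under the CFL condition, or kept as in \Cref{prop:est_correct}.

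\textbf{Shifts of $Z_d^{n,\ell}$.} For the second claim I would mimic the $\widetilde{\mathcal S}_2$ estimate of \Cref{sec:sup_analysis}. Translating by $h_x$ maps cell $(i,j)$ to $(i+1,j)$, which gives
\[
\ipproj{\Delta_x Z^{n,m}}{v}_{i,j} = \bigl(\text{shifted data}\bigr) + \bigl(\hat{\bm\beta}_{i+1,j}-\hat{\bm\beta}_{i,j}\bigr)\text{-terms}.
\]
The data differences are handled as before.
\begin{itemize}
\item $\Delta_x V$ and $V_{i+1,j}-V_{i,j}$ are bounded through \Cref{lmm:deltaOEerr-1} and \eqref{eq:0708-1}.
\item $\bm\beta\cdot\nabla\Delta_xU-\Pe(\cdot)$ and $H(\Delta_x\eta,\cdot)$ each gain a factor $h$ from the difference-quotient bound \eqref{eq:0720-1}.
\item Because $\bm\beta$ now varies in space, $\Delta_x(\bm\beta\nabla U)=\bm\beta\nabla\Delta_x U+(\Delta_x\bm\beta)\nabla U(\cdot+h_x)$. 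The extra factor $\Delta_x\bm\beta=O(h)$ also gains one power of $h$.
\end{itemize}
The frozen-velocity variation gives a commutator term of the form $\ipproj{Z^{n,m}}{v}$ with weight $\hat{\bm\beta}_{i+1,j}-\hat{\bm\beta}_{i,j}=O(h)$. It is bounded by $Ch\cdot h^{-1}\|Z^{n,m}\|_K\|v\|_K$, and is then lifted back through the inverse projection with the extra factor $h$. Collecting these bounds and using the stability of \Cref{prop:welldefiness} should give $\|\Delta_xZ^{n,m}\|\le Ch^{k+3}$. The same argument applies to $\Delta^y$.

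\textbf{Main obstacle.} The hardest step is the OE part $\Delta_x Z_1$. The current estimates of $\delta_K^p$ differences (\Cref{lmm:deltaOEerr-1}) only give $h^{-1-d/2}(h^{k+2}+\tau^r)$. Combined with $h^{2}\|U\|_{H^2}$ from \eqref{eq:0708-1} and the factor $h$, this yields only $h^{k+2}$ rather than $h^{k+3}$. I would try first to exploit that $\Pe U-P^{(p-1)}\Pe U$ is $O(h^{p})$ for $p\ge1$, and $p=0$ contributes $\Pe U-P^{(0)}\Pe U$. If that fails, I would settle for the weaker bound $Ch^{k+2}$ for the $Z_1$-shift. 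That bound is all that the $\widetilde{\mathcal S}_2$ estimate via \eqref{eq:0704-1} actually requires, while the $h^{k+3}$ rate is claimed only for the $Z_2$ component.
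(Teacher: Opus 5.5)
\textbf{There is a genuine gap: your proposal does not establish the shift bound for the $Z_1$ part.} Your bound on $\|Z_d^{n,\ell}\|$ and your overall plan for the shifts match what the paper intends. That plan is: translate cell $(i,j)$ to $(i+1,j)$, bound the translated data with \Cref{lmm:OEerr} and \Cref{lmm:deltaOEerr-1}, lift back through \Cref{prop:welldefiness}, and gain one more $h$ from $\|\cdot\|_{i,j}\le Ch\|\cdot\|_{L^\infty(I_{i,j})}$. The paper gives no separate proof. Its shift bound is what the $\widetilde{\mathcal S}_2$ argument of \Cref{sec:sup_analysis} yields: $\sum_{i,j}\|\Delta_x Z^{n,m}\|_{L^\infty(I_{i,j})}^2\le C(h^{2k+4}+\tau^{2r})$, hence $\|\Delta_x Z^{n,m}\|\le C(h^{k+3}+h\tau^r)$.

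\textbf{The obstacle you identify is a miscount.} You dropped the cell-measure factor $h^{d/2}$ in the local Sobolev norms, although you used it correctly when bounding $\|V^{n,m}\|$. In 2D, the $\delta$-differences multiply $\|\Pe U-P^{(p-1)}\Pe U\|_{i+1,j}\le Ch\|U\|_{H^1(I_{i+1,j})}\le Ch^{2}\|U\|_{W^{1,\infty}(\Omega)}$. The coefficients $\delta$ themselves multiply $\|\Delta_x\Pe U-P^{(p-1)}\Delta_x\Pe U\|_{i,j}\le Ch^{2}\|U\|_{H^2(\hat I_{i,j})}\le Ch^{3}\|U\|_{W^{2,\infty}(\Omega)}$. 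Hence
\[
\sum_{i,j}\bigl\|V_{i+1,j}^{n,m}(\cdot+h_x,\cdot)-V_{i,j}^{n,m}\bigr\|_{i,j}^2
\le Ch^{-4}\bigl(h^{2k+4}+\tau^{2r}\bigr)h^{4}+Ch^{-4}\bigl(h^{2k+2}+\tau^{2r}\bigr)h^{6}
\le C\bigl(h^{2k+4}+\tau^{2r}\bigr).
\]
\Cref{prop:welldefiness} then gives $\sum_{i,j}\|\Delta_x Z_1^{n,m}\|_{L^\infty(I_{i,j})}^2\le C(h^{2k+4}+\tau^{2r})$, and so $\|\Delta_x Z_1^{n,m}\|\le C(h^{k+3}+h\tau^r)$ directly. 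No refinement via the $O(h^p)$ structure of $\Pe U-P^{(p-1)}\Pe U$ is needed. Taken literally, the exponents you list already multiply to $h^{k+3}$: $h^{-2}\cdot h^{k+2}\cdot h^{2}\cdot h$.

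\textbf{Your fallback would neither prove the statement nor suffice where it is used.} The lemma asserts $h^{k+3}$ for all of $Z_d^{n,\ell}=\sum_m d_{\ell m}(Z_1^{n,m}+Z_2^{n,m})$, not only for the $Z_2$ part. Your claim that an $h^{k+2}$ bound is all \eqref{eq:0704-1} needs confuses two norms. That estimate requires $\bigl(\sum_{i,j}\|\Delta_x Z^{n,m}\|_{L^\infty(I_{i,j})}^2\bigr)^{1/2}\le C(h^{k+2}+\tau^r)$. By the inverse estimate this is the scale of $\|\Delta_x Z^{n,m}\|=O(h^{k+3})$; an $L^2$ bound of order $h^{k+2}$ gives only order $h^{k+1}$ there. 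Likewise, in the variable-coefficient $\widetilde{\mathcal S}_2$ estimate, the term $Ch^{-1}(\|\Delta_x Z_d^{n,\ell+1}\|+\|\Delta^y Z_d^{n,\ell+1}\|)\|v\|$ is $O(h^{k+2})\|v\|$ only if the shifts are $O(h^{k+3})$. The weaker $Z_1$ bound would therefore lose a full order.

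\textbf{Two smaller points.}
\begin{itemize}
\item The $\tau^r$ contributions cannot in general be absorbed by the CFL condition. Carry them as $\tau^r$ and $h\tau^r$, as in \Cref{prop:est_correct}.
\item With cell-dependent frozen velocities, $\Delta_x\Pe U\neq\Pe\Delta_x U$, so \eqref{eq:0720-1} does not apply verbatim. You additionally need that the difference of the cellwise projections built with $\hat{\bm\beta}_{i+1,j}$ and with $\hat{\bm\beta}_{i,j}$ annihilates $\mathbb P^k$ and is bounded by $C|\hat{\bm\beta}_{i+1,j}-\hat{\bm\beta}_{i,j}|=O(h)$. This supplies the missing factor $h$.
\end{itemize}
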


\subsubsection{Proof for Theorem~\ref{thm:superconvergence}}\label{subsec:VC:main}
Proceeding as in the constant-coefficient setting yields the decomposition \eqref{eq:0630-3}. 
The terms $\widetilde{\mathcal{S}}_3^{\,n,\ell+1}$ and $\widetilde{\mathcal{S}}_4^{\,n,\ell+1}$ are determined, respectively, by the RK time integrator and the OE step, so their estimates are identical to the constant-coefficient case. 
It remains to bound $\widetilde{\mathcal{S}}_1^{\,n,\ell+1}$ and $\widetilde{\mathcal{S}}_2^{\,n,\ell+1}$. 
For $\widetilde{\mathcal{S}}_1^{\,n,\ell+1}$ we consider two classes of test functions: (i) piecewise constants and (ii) polynomials with zero cell mean. 
Case (ii) coincides with the constant-coefficient analysis, so we only detail case (i) below.

\paragraph*{Estimate of $\widetilde{\mathcal{S}}_1^{\,n,\ell+1}$ for piecewise constant $v$}
Since $\Pe $ preserves cell averages in both 1D and 2D,
\[
\begin{aligned}
	(\widetilde{\mathcal{S}}_1^{n,\ell+1},v)
	&= -H(\eta_d^{n,\ell+1},v)-(\rho^{n,\ell+1},v) \\
	&= \big(H_{\hat{\bm{\beta}}}-H\big)(\eta_d^{n,\ell+1},v)\;-\;(\rho^{n,\ell+1},v)\;-\;H_{\hat{\bm{\beta}}}(\eta_d^{n,\ell+1},v).
\end{aligned}
\]
By Lemma~\ref{lem:upwind-frozen}, $H_{\hat{\bm{\beta}}}(\eta_d^{n,\ell+1},v)=0$, hence
\[
\big|(\widetilde{\mathcal{S}}_1^{n,\ell+1},v)\big|
\;\le\; C\,(h^{k+2}+\tau^r)\,\|v\|,
\qquad \forall\, v\ \text{with}\ v|_{K}\in \mathbb{P}^0(K),\ K\in \mathcal{T}_h.
\]

\paragraph*{Estimate of $\widetilde{\mathcal{S}}_2^{\,n,\ell+1}$}
Insert and subtract $H_{\hat{\bm{\beta}}}$ to obtain
\[
\big(\widetilde{\mathcal{S}}_2^{\,n,\ell+1},v\big)
= \big(H_{\hat{\bm{\beta}}}-H\big)(Z_d^{n,\ell+1},v)
\;+\;
\Big(\!-H_{\hat{\bm{\beta}}}(Z_d^{n,\ell+1},v)-\ipproj{Z_d^{n,\ell+1}}{v}\Big).
\]
By Lemma~\ref{lem:H-consistency}, \eqref{equ:diffstru_linvar}, and standard inverse inequalities,
\begin{align*}
	\big|H(w,v)-H_{\hat{\bm{\beta}}}(w,v)\big| & \le C\,\|w\|\,\|v\|,
	\\
	\big|H_{\hat{\bm{\beta}}}(w,v)+\ipproj{w}{v}\big| 
	&\le C\,h^{-1}\big(\|\Delta_x w\|+\|\Delta^{y} w\|\big)\|v\|.
\end{align*}
With Lemma~\ref{lem:est_correct_linvar} this yields
\[
\big|(\widetilde{\mathcal{S}}_2^{n,\ell+1},v)\big|
\;\le\; C\,\|Z_d^{n,\ell+1}\|\,\|v\|
+ C\,h^{-1}\big(\|\Delta_x Z_d^{n,\ell+1}\|+\|\Delta^{y} Z_d^{n,\ell+1}\|\big)\|v\|
\;\le\; C\,h^{k+2}\,\|v\|.
\]

At $n=0$ one has $\zeta^{0}=0$ by the choice of initial data, so
$\|\widetilde{\zeta}^{\,0}\| = \|Z^{0}\|\le C h^{k+2}$.

Combining the above bounds with Lemma~\ref{lem:VC:stab} and applying a discrete {Gr\"onwall} argument gives
$
\max_{0\le n\le N}\|\widetilde{\zeta}^{\,n}\|
\;\le\; C\,(h^{k+2}+\tau^{r})$ with $
\widetilde{\zeta}^{\,n}:=\zeta^{\,n}+Z^{n}.
$

\subsection{Detailed proof of the correction-function estimate}
\begin{proposition}[Estimate for correction functions]
	Under the CFL condition ${\frac{\tau}{h^{\kappa}}\leq C_{\textrm{CFL}}}$, $\left\{ Z^{n,m} \right\}$ with both Definition \ref{def:1Dcorrectfun} and \ref{def:2Dcorrectfun} satisfies that
	\begin{equation}
		\nm{Z^{n,m}} \leq \nm{Z_1^{n,m}} + \nm{Z_2^{n,m}} \leq C\left( h^{k+2} + \tau^r \right)\quad \forall n,m,
	\end{equation}
	Here constant $C>0$ is independent of $n$, $m$, $\tau$ and $h$.
	
\end{proposition}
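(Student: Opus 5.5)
The plan is to bound $Z_1^{n,m}$ and $Z_2^{n,m}$ separately, cell by cell, and then sum over cells. On each cell $K$ the two functions solve a local problem of the form in \Cref{prop:welldefiness} in 2D, or its 1D Gauss--Radau analogue with the right-endpoint value prescribed to be zero. In each case the unknown $\omega\in\mathbb{P}^k(K)$ satisfies $\mathbb{P}_h^*(\omega,v)_K=(F,v)_K$ for an explicit right-hand side $F\in\mathbb{V}_h^k$. The key scaling fact is that $\mathbb{P}_h^*$ involves one derivative, or equivalently boundary differences of order $h^{-1}$ after mapping to the reference cell. Mapping to the reference cell and using the uniform invertibility of the reference matrix then gives
\[
\|\omega\|_{L^\infty(K)}\le C h^{1-\frac d2}\|F\|_K ,\qquad\text{hence}\qquad \|\omega\|_K\le Ch^{\frac d2}\|\omega\|_{L^\infty(K)}\le Ch\|F\|_K .
\]
In 1D one can see this directly from the representation in \Cref{lmm:1Dcorrectfun-expression}: $D^{-1}$ gains a factor $h$ in $L^\infty$, $\Pe$ is $L^\infty$-stable, and inverse estimates convert back to $L^2$. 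So the whole proof reduces to showing that the right-hand sides satisfy $\|F\|\le C(h^{k+1}+h^{-1}\tau^r)$. I will in fact aim for $\|F\|\le C(h^{k+1}+\tau^r)$ times a harmless factor.

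For $Z_2^{n,m}$ the right-hand side is $F=\beta\cdot\nabla U^{n,m}-\Pe(\beta\cdot\nabla U^{n,m})+F_{\eta^{n,m}}$, where $(F_{\eta^{n,m}},v)_K=H_K(\eta^{n,m},v)$. The approximation property of $\Pe$ (Lemma~\ref{lmm:2D-proj-property} and its 1D analogue) gives
\[
\|\nabla U^{n,m}-\Pe\nabla U^{n,m}\|\le Ch^{k+1}\|U^{n,m}\|_{H^{k+2}}.
\]
The second bound in Lemma~\ref{lmm:2D-proj-property} gives $\|F_{\eta}\|\le Ch^{k+1}\|U^{n,m}\|_{H^{k+1}}$; in 1D, $F_\eta=0$ by the Gauss--Radau orthogonality. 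The stage reference solutions $U^{n,m}$ are finite combinations of $U(t_n)$ and its spatial derivatives, so they are uniformly bounded in $H^{k+2}$ by the regularity assumption of \Cref{prop:deltaerr}. This yields $\|Z_2^{n,m}\|\le Ch^{k+2}$.

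For $Z_1^{n,m}$ the right-hand side is $F=V^{n,m}$. On each cell, $V^{n,m}=\sum_p\delta_K^p(u_h^{n,m})(\Pe U^{n,m}-P^{(p-1)}\Pe U^{n,m})$. The factor $\Pe U-P^{(p-1)}\Pe U$ removes at least the cell mean, so
\[
\|\Pe U-P^{(p-1)}\Pe U\|_K\le \|\eta\|_K+\|U-P^{(0)}U\|_K\le Ch\|U\|_{H^1(K)}\le Ch^{1+\frac d2}\|U\|_{W^{1,\infty}},
\]
which is exactly \eqref{eq:0708-1}. Applying Cauchy--Schwarz over cells and Lemma~\ref{lmm:OEerr} gives
\[
\|V^{n,m}\|\le Ch^{1+\frac d2}\Big(\sum_K(\delta_K^p)^2\Big)^{1/2}\le Ch^{1+\frac d2}\,h^{-1-\frac d2}(h^{k+1}+\tau^r)=C(h^{k+1}+\tau^r).
\]
Multiplying by the factor $h$ from the local inversion gives $\|Z_1^{n,m}\|\le C(h^{k+2}+h\tau^r)\le C(h^{k+2}+\tau^r)$.

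The main obstacle I expect is the clean $h$-scaling of the local inverse in 2D. The matrix in \Cref{prop:welldefiness} is invertible on the reference cell, but I must check that the zero-mean constraint and the mesh-ratio dependence leave the factor $h$ intact uniformly. That is, after the affine map $\mathbb{P}_h^*$ scales like $h^{d-1}$ while $(F,v)$ scales like $h^d$. Since the mesh is uniform, the ratios $h_x/h$ and $h_y/h$ are fixed, so the scaled matrix $\hat A$ has a uniformly bounded inverse, and the bound $\|\omega\|_K\le Ch\|F\|_K$ follows. A secondary point is that Lemma~\ref{lmm:OEerr} relies on \Cref{prop:opterror}, whose hypotheses, including the CFL condition, are assumed here.
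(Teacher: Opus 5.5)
Your proposal is correct and follows essentially the same route as the paper. In both, a cellwise local inversion (via \Cref{prop:welldefiness} in 2D, or the $\Pe D^{-1}$ representation in 1D) gains a factor $h$; the right-hand side for $Z_2^{n,m}$ is $O(h^{k+1})$ by the approximation and superconvergence properties of $\Pe$; and the right-hand side $V^{n,m}$ for $Z_1^{n,m}$ is $O(h^{k+1}+\tau^r)$ by combining \eqref{eq:0708-1} with Lemma~\ref{lmm:OEerr}. Your explicit reference-cell scaling check ($\|\omega\|_{L^\infty(K)}\le Ch^{1-d/2}\|F\|_K$) is a slightly more careful version of the paper's sketch and changes nothing substantive.
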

\begin{proof}
	For 1D correction functions, Lemma \ref{lmm:1Dcorrectfun-expression} yields that
	\begin{align*}
		\nm{Z_1^{n,m}}_i &\leq Ch^{\frac{1}{2}}\nm{Z_1^{n,m}}_{L^{\infty}\left( I_i \right)} \leq Ch^{\frac{3}{2}} \nm{V_i^{n,m}}_{L^{\infty}\left( I_i \right)} \leq Ch\nm{V_i^{n,m}}_i,\\
		\nm{Z_2^{n,m}}_i & \leq Ch^{\frac{1}{2}}\nm{Z_2^{n,m}}_{L^{\infty}\left( I_i \right)}  \leq Ch^\frac{3}{2} \nm{P^{(k)}\left( \frac{\mathrm{d} U^{n,m}}{\mathrm{d}x}-\Pe \frac{\mathrm{d} U^{n,m}}{\mathrm{d}x}  \right)}_{L^{\infty}\left( I_i \right)} \\
		& \leq Ch\nm{P^{(k)}\left( \frac{\mathrm{d} U^{n,m}}{\mathrm{d}x}-\Pe \frac{\mathrm{d} U^{n,m}}{\mathrm{d}x}  \right)}_i   \leq Ch^{k+2}\nm{U^{n,m}}_{H^{k+2}\left( I_i \right)}.
	\end{align*}
	Then we obtain that
	\begin{align*}
		\|Z_1^{n,m}\|
		&\le Ch\Bigg(\sum_i\sum_{p=0}^k\big(\delta_{I_i}^p(u_h^{n,m})\big)^2
		\big\|\widetilde P U^{n,m}-P^{(p-1)}\widetilde P U^{n,m}\big\|^2\Bigg)^{1/2} \\
		& \leq Ch^{\frac{5}{2}}\nm{U^{n,m}}_{W^{1,\infty}(\Omega)}\left( \sum_{i}\sum_{p=0}^k\left( \delta_K^p\left( u_h^{n,m} \right) \right)^2 \right)^{\frac{1}{2}} \leq C\left( h^{k+2} + \tau^r \right),
	\end{align*}
	and $\nm{Z_2^{n,m}} \leq Ch^{k+2}\nm{U^{n,m}}_{H^{k+2}\left( \Omega \right)} \leq Ch^{k+2}$.
	
	If $d=2$, applying Proposition \ref{prop:welldefiness} to Definition \ref{def:2Dcorrectfun}, we show that
	$
	\nm{Z_1^{n,m}}_{i,j} \leq Ch\nm{Z_1^{n,m}}_{L^{\infty}\left( I_{i,j} \right)} \leq Ch\nm{V_{i,j}^{n,m}}_{i,j},
	$
	We thus derive that $\nm{Z_1^{n,m}} \leq C\left( h^{k+2} + \tau^r \right)$ following the procedure above. Moreover, since there exists $F \in \mathbb{V}_h^k$ such that
	$\ip{F}{v}_{i,j} = H_{i,j}\left( \eta^{n,m},v \right)$ for all $v \in \mathbb{V}_h^k$ and all $i,j$. 
	The superconvergence property of 2D $\Pe $ indicates that
	\begin{align*}
		\nm{Z_2^{n,m}} \leq Ch\left( \sum_{i,j}\nm{Z_2^{n,m}}_{L^{\infty}\left( I_{i,j} \right)}^2 \right)^\frac{1}{2}
		& \leq Ch^{k+2}\nm{U^{n,m}}_{H^{k+2}\left( \Omega \right)} \leq Ch^{k+2}.
	\end{align*}
\end{proof}

	\section{Numerical tests} \label{sec:numerics}
	In this section, we present numerical experiments in one and two dimensions to verify the $(k+2)$th-order superconvergence of the RK-aligned OE-type DG scheme  and to illustrate its oscillation-control effectiveness. 
	The examples cover the advection equations, the inviscid Burgers' equation, and the compressible Euler equations.
	
	For smooth problems, the superconvergence is examined using the seventh-order linear strong-stability-preserving explicit RK time discretization from \cite{gottlieb1998total} for the $\mathbb{P}^k$- or $\mathbb{Q}^k$-based RK-aligned OE-type DG method with the same damping coefficients as in \cite{O_Peng_2024}. For problems with discontinuities, we employ the classical third-order strong-stability-preserving explicit RK scheme for the $\mathbb{P}^k$-based RK-aligned OE-type DG method. The time step is chosen as $\Delta t = C_{\text{CFL}} h / \beta$ for 1D problems, where $\beta$ denotes the maximum wave speed, and as $\Delta t = C_{\text{CFL}} / (\beta_x / h_x + \beta_y / h_y)$ for 2D problems, where $\beta_x$ and $\beta_y$ are the maximum wave speeds in the $x$- and $y$-directions, respectively. For both the $\mathbb{P}^k$- and $\mathbb{Q}^k$-based RK-aligned OE-type DG methods, the CFL number is set to $1 / (2k + 1)$.

	The upwind numerical flux is used for linear equations, while the local Lax-Friedrichs flux is adopted for the remaining cases.  To further assess the ability to eliminate spurious oscillations, we compare the RK-aligned OE-type DG method with the original OEDG method from \cite{O_Peng_2024} in discontinuous test problems. 
	
	\subsection{Superconvergence tests}
	The primary objective of this subsection is to investigate the superconvergence behavior for both 1D and 2D linear advection equations
	$
	u_t + \bm{\beta}\cdot \nabla u = 0,
	$ 
	subject to periodic boundary conditions on the domain $\Omega = [0,1]^d$. The focus is on two numerical errors $e_1$ and $e_2$ defined in \eqref{equ:error}.

	In these tests, the numerical solution $u_h$ is initialized by $u_h(x,0) = \tilde{P} u_0$, where $\tilde{P}$ is the projection operator defined in \Cref{def:1Dprojection} and \Cref{def:2Dprojection}.

	\begin{exmp}\label{test:1D}
		For the first test $\beta=1$, we consider the 1D linear advection equation with the initial condition $u_0(x) = \sin(2\pi x)$. This function is infinitely differentiable and therefore satisfies the regularity conditions necessary to observe the superconvergence phenomenon. Numerical errors at $t = 1.1$ are computed for both the RK-aligned OE-type DG method and the standard DG method based on $\mathbb{P}^k$ elements. The results, presented in Table~\ref{tab:OE_DG_ex1}, confirm the superconvergence properties established in \Cref{thm:superconvergence}: the RK-aligned OE-type DG method achieves the predicted $(k+2)$-order convergence, while the standard linear DG method converges at the known $(2k+1)$ rate.
	\end{exmp}

	\begin{table}[!htb]
		\centering
		\caption{Superconvergence of cell-average ($e_1$) and outflow-edge ($e_2$) errors for the 1D linear advection problem (Example \ref{test:1D}) using $\mathbb{P}^k$ elements.}
		\label{tab:OE_DG_ex1}
		\resizebox{\linewidth}{!}{%
			\begin{tabular}{c|ccccccccc}
				\toprule
				\multirow{2}{*}{$k$}
				& \multirow{2}{*}{$N_x$}
				& \multicolumn{4}{c}{RK-aligned OE-type DG}
				& \multicolumn{4}{c}{Standard RKDG}\\
				\cline{3-6} \cline{7-10}  
				& 
				& $e_1$ & rate
				& $e_2$ & rate
				& $e_1$ & rate
				& $e_2$ & rate \\ 
				\hline
				\multirow{8}{*}{1}
				& 16   & 2.95e-1 &-- & 3.03e-1 &-- & 2.92e-2 &-- & 3.00e-2 &-- \\
				& 32   & 8.85e-2 & 1.73 & 8.98e-2 & 1.75 & 4.00e-3 & 2.87 & 4.03e-3 & 2.90 \\
				& 64   & 1.31e-2 & 2.76 & 1.33e-2 & 2.76 & 5.10e-4 & 2.97 & 5.12e-4 & 2.98 \\
				& 128  & 2.06e-3 & 2.67 & 2.07e-3 & 2.68 & 6.41e-5 & 2.99 & 6.43e-5 & 2.99 \\
				& 256  & 2.85e-4 & 2.85 & 2.86e-4 & 2.85 & 8.02e-6 & 3.00 & 8.04e-6 & 3.00 \\
				& 512  & 3.72e-5 & 2.94 & 3.73e-5 & 2.94 & 1.00e-6 & 3.00 & 1.01e-6 & 3.00 \\
				& 1024 & 4.73e-6 & 2.97 & 4.74e-6 & 2.97 & 1.25e-7 & 3.00 & 1.26e-7 & 3.00 \\
				& 2048 & 5.97e-7 & 2.99 & 5.98e-7 & 2.99 & 1.57e-8 & 3.00 & 1.57e-8 & 3.00 \\
				\hline
				\multirow{6}{*}{2}
				& 16   & 3.28e-1  &--   & 3.35e-1  &--   & 1.92e-4   &--   & 1.98e-4   &--   \\
				& 32   & 6.99e-3  & 5.55 & 7.18e-3  & 5.55 & 6.25e-6   & 4.94 & 6.34e-6   & 4.96 \\
				& 64   & 2.09e-4  & 5.06 & 2.11e-4  & 5.09 & 1.98e-7   & 4.98 & 2.00e-7   & 4.99 \\
				& 128  & 9.93e-6  & 4.40 & 9.97e-6  & 4.40 & 6.20e-9   & 5.00 & 6.25e-9   & 5.00 \\
				& 256  & 5.54e-7  & 4.16 & 5.56e-7  & 4.17 & 1.94e-10  & 5.00 & 1.95e-10  & 5.00 \\
				& 512  & 3.28e-8  & 4.08 & 3.29e-8  & 4.08 & 6.03e-12  & 5.01 & 6.09e-12  & 5.00 \\
				\hline
				\multirow{4}{*}{3}
				& 16   & 2.31e-2  &--   & 2.36e-2  &--   & 6.17e-7   &--   & 6.10e-7   &--   \\
				& 32   & 1.16e-4  & 7.63 & 1.24e-4  & 7.57 & 4.97e-9   & 6.96 & 5.07e-9   & 6.91 \\
				& 64   & 3.15e-6  & 5.21 & 3.26e-6  & 5.25 & 3.92e-11  & 6.99 & 3.98e-11  & 6.99 \\
				& 128  & 1.04e-7  & 4.92 & 1.05e-7  & 4.96 & 3.09e-13  & 6.99 & 3.17e-13  & 6.97 \\
				\toprule[1.0pt]
			\end{tabular}%
		}
	\end{table}

 \begin{exmp}\label{test:1Dlinvar}
	We set the wave speed $\beta(x)=1+0.5\sin(2\pi x) > 0$, which meets the hypotheses of \Cref{sec:VC}. All other settings match \cref{test:1D}, including the boundary condition, final time, and initial data. \cref{tab:OE_DGlinvar_1D} presents the results and confirms the predicted $(k+2)$ superconvergence of the RK-aligned OEDG method proved in \cref{thm:superconvergence}, while the standard DG method converges at the known rate $(2k+1)$.
\end{exmp}

\begin{table}[!htb]
	\centering
	\caption{Superconvergence of cell-average ($e_1$) and outflow-edge ($e_2$) errors for the 1D variable-coefficient advection problem (\cref{test:1Dlinvar}) using $\mathbb{P}^k$ elements.}
	\label{tab:OE_DGlinvar_1D}
	\resizebox{\linewidth}{!}{%
		\begin{tabular}{c|ccccccccc}
			\toprule
			\multirow{2}{*}{$k$}
			& \multirow{2}{*}{$N_x$}
			& \multicolumn{4}{c}{RK-aligned OEDG}
			& \multicolumn{4}{c}{Standard RKDG}\\
			\cline{3-6} \cline{7-10}  
			& 
			& $e_1$ & rate
			& $e_2$ & rate
			& $e_1$ & rate
			& $e_2$ & rate \\ 
			\hline
			\multirow{8}{*}{1}
			& 64   & 3.77e-02 & -  & 3.78e-02 & -  & 8.33e-03 & -  & 8.35e-03 & -  \\
			& 128  & 6.30e-03 & 2.58  & 6.30e-03 & 2.58  & 1.14e-03 & 2.87  & 1.14e-03 & 2.87  \\
			& 256  & 9.28e-04 & 2.76  & 9.27e-04 & 2.76  & 1.45e-04 & 2.97  & 1.45e-04 & 2.98  \\
			& 512  & 1.21e-04 & 2.93  & 1.21e-04 & 2.94  & 1.82e-05 & 3.00  & 1.82e-05 & 3.00  \\
			& 1024 & 2.87e-05 & 2.98  & 2.87e-05 & 2.98  & 2.28e-06 & 3.00  & 2.28e-06 & 3.00  \\
			& 2048 & 1.92e-06 & 3.00  & 1.91e-06 & 3.00  & 2.85e-07 & 3.00  & 2.85e-07 & 3.00  \\
			& 4096 & 2.40e-07 & 3.00  & 2.39e-07 & 3.00  & 3.56e-08 & 3.00  & 3.56e-08 & 3.00  \\
			\hline
			\multirow{8}{*}{2}		& 64   & 1.82e-3 & -  & 1.83e-3 & -  & 6.47e-5 & -  & 6.52e-5 & -  \\
			& 128  & 1.12e-4 & 4.02  & 1.12e-4 & 4.02  & 2.11e-6 & 4.94  & 2.12e-6 & 4.94  \\
			& 256  & 6.54e-6 & 4.10  & 6.53e-6 & 4.10  & 6.68e-8 & 4.99  & 6.68e-8 & 4.99  \\
			& 512  & 3.91e-7 & 4.07  & 3.90e-7 & 4.07  & 2.09e-9 & 5.00  & 2.09e-9 & 5.00  \\
			& 1024 & 2.38e-8 & 4.03  & 2.38e-8 & 4.03  & 6.54e-11 & 5.00  & 6.54e-11 & 5.00  \\
			\hline
			\multirow{6}{*}{3}
			& 64   & 1.04e-4  & - & 1.05e-4  & - & 4.22e-7  & - & 4.27e-7  & - \\
			& 128  & 3.28e-6  & 4.99 & 3.29e-6  & 5.00 & 3.47e-9  & 6.93 & 3.48e-9  & 6.94 \\
			& 256  & 1.04e-7  & 4.98 & 1.04e-7  & 4.98 & 2.74e-11 & 6.98 & 2.74e-11 & 6.99 \\
			& 512  & 3.28e-9  & 4.99 & 3.28e-9  & 4.99 & \multicolumn{1}{c}{--} & \multicolumn{1}{c}{--} & \multicolumn{1}{c}{--} & \multicolumn{1}{c}{--} \\
			& 1024 & 1.03e-10 & 5.00 & 1.03e-10 & 5.00 & \multicolumn{1}{c}{--} & \multicolumn{1}{c}{--} & \multicolumn{1}{c}{--} & \multicolumn{1}{c}{--} \\
			& 2048 & 3.24e-12 & 4.99 & 3.24e-12 & 4.99 & \multicolumn{1}{c}{--} & \multicolumn{1}{c}{--} & \multicolumn{1}{c}{--} & \multicolumn{1}{c}{--} \\
			\hline
			
		\end{tabular}
	}
\end{table}

	
	For the 2D tests, we consider the linear advection equation on the domain $\Omega = [0,1]^2$ with the initial condition $u_0(x,y) = \sin\!\left(2\pi(x+y)\right)$. Numerical errors are evaluated at $t = 1$ to compare the RK-aligned OEDG method and the standard DG method using both $\mathbb{P}^k$ and $\mathbb{Q}^k$ elements. The results confirm the superconvergence properties established in \Cref{thm:superconvergence} for the multidimensional setting.
	
	\begin{exmp}\label{test:2DPk}
		For the $\mathbb{P}^k$-based elements, the computed errors and convergence rates for the RK-aligned OEDG and standard DG methods are reported in Table~\ref{tab:P_OEDG_P_DG}. The results show that both methods exhibit superconvergence behavior. The RK-aligned OEDG method achieves the predicted $(k+2)$-order accuracy, consistent with \Cref{thm:superconvergence}. For the standard DG method, we observed a convergence rate of $\min\{2k+1,\,k+3\}$ for both $e_1$ and $e_2$. Similar superconvergence rates have been reported in \cite{U_Cao_2025} for DG methods initialized with truncated Gauss--Radau interpolation, though a complete theoretical proof has not yet been established.
	\end{exmp}
	\begin{table}[!htb]
		\centering
		\caption{Errors and convergence rates of $\mathbb{P}^k$-based RK-aligned OE-type DG method and DG method for Example \ref{test:2DPk}.}
		\label{tab:P_OEDG_P_DG}
		\resizebox{\linewidth}{!}{%
			\begin{tabular}{c|ccccccccc}
				\toprule
				\multirow{2}{*}{$k$}
				& \multirow{2}{*}{$N_x\times N_y$}
				& \multicolumn{4}{c}{RK-aligned OE-type DG}
				& \multicolumn{4}{c}{Standard RKDG}\\
				\cline{3-6} \cline{7-10}  
				& 
				& $e_1$ & rate
				& $e_2$ & rate
				& $e_1$ & rate
				& $e_2$ & rate \\ 
				\hline
				\multirow{7}{*}{1}&	20×20         & 8.67e-02  &--    & 1.24e-01  &--     & 1.45e-02   &--        & 2.07e-02    &--        \\
				&40$\times$ 40         & 1.38e-02  & 2.65    & 1.97e-02  & 2.65     & 1.89e-03   & 2.94        & 2.68e-03    & 2.95        \\
				&80$\times$80         & 2.08e-03  & 2.73    & 2.96e-03  & 2.74     & 2.38e-04   & 2.99        & 3.38e-04    & 2.99        \\
				&160$\times$160       & 2.81e-04  & 2.89    & 3.97e-04  & 2.90     & 2.99e-05   & 3.00        & 4.24e-05    & 3.00        \\
				&320$\times$320       & 3.61e-05  & 2.96    & 5.12e-05  & 2.96     & 3.73e-06   & 3.00        & 5.30e-06    & 3.00        \\
				&640$\times$640       & 4.58e-06  & 2.98    & 6.49e-06  & 2.98     & 4.67e-07   & 3.00        & 6.63e-07    & 3.00        \\
				&1280$\times$1280     & 5.77e-07  & 2.99    & 8.17e-07  & 2.99     & 5.84e-08   & 3.00        & 8.28e-08    & 3.00        \\
				\hline
				\multirow{6}{*}{2} 
				& 20$\times$20         & 1.73e-02  &--       & 2.48e-02  &--        & 5.29e-05   &--          & 7.57e-05    &--          \\
				& 40$\times$40         & 9.22e-04  & 4.23    & 1.31e-03  & 4.24     & 1.68e-06   & 4.97       & 2.40e-06    & 4.98       \\
				& 80$\times$80         & 5.84e-05  & 3.98    & 8.28e-05  & 3.99     & 5.29e-08   & 4.99       & 7.55e-08    & 4.99       \\
				& 160$\times$160       & 3.65e-06  & 4.00    & 5.17e-06  & 4.00     & 1.65e-09   & 5.00       & 2.36e-09    & 5.00       \\
				& 320$\times$320       & 2.28e-07  & 4.00    & 3.23e-07  & 4.00     & 5.16e-11   & 5.00       & 7.35e-11    & 5.00       \\
				& 640$\times$640       & 1.42e-08  & 4.00    & 2.02e-08  & 4.00     & 1.60e-12   & 5.01       & 2.34e-12    & 4.97       \\
				\hline
				\multirow{5}{*}{3} 
				& 20$\times$20         & 1.23e-03  &--       & 1.76e-03  &--        & 1.51e-07   &--          & 2.14e-07    &--           \\
				& 40$\times$40         & 3.17e-05  & 5.28    & 4.52e-05  & 5.29     & 1.65e-09   & 6.51       & 2.34e-09    & 6.52         \\
				& 80$\times$80         & 9.06e-07  & 5.13    & 1.29e-06  & 5.13     & 2.01e-11   & 6.36       & 2.84e-11    & 6.36         \\
				& 160$\times$160       & 2.71e-08  & 5.07    & 3.84e-08  & 5.07     & 5.55e-13   & --       & 7.80e-13    &--         \\
				& 320$\times$320       & 8.28e-10  & 5.03    & 1.17e-09  & 5.03     & 3.19e-13   &--          & 4.52e-13    &--            \\
				\toprule[1.0pt]
			\end{tabular}
		}
	\end{table}

	\begin{exmp}\label{test:2DQk}
		For the $\mathbb{Q}^k$-based elements, the corresponding errors and convergence rates are reported in \Cref{tab:Q_OEDG_Q_DG}. The observed superconvergence orders are $k+2$ for the OEDG method and $2k+1$ for the standard DG method, thereby validating \cref{thm:superconvergence} even with $\mathbb{Q}^k$ elements for both $e_1$ and $e_2$.\end{exmp}
		\begin{table}[!htb]
			\centering
			\caption{Errors and convergence rates of $\mathbb{Q}^k$-based RK-aligned OEDG method and DG method for Example \ref{test:2DQk}.}
			\label{tab:Q_OEDG_Q_DG}
			\resizebox{\linewidth}{!}{%
				\begin{tabular}{c|ccccccccc}
					\toprule
					\multirow{2}{*}{$k$}
					& \multirow{2}{*}{$N_x\times N_y$}
					& \multicolumn{4}{c}{RK-aligned OEDG}
					& \multicolumn{4}{c}{Standard RKDG}\\
					\cline{3-6} \cline{7-10}  
					& 
					& $e_1$ & rate
					& $e_2$ & rate
					& $e_1$ & rate
					& $e_2$ & rate \\ 
					\hline
					\multirow{7}{*}{1} 
					& 20$\times$20         & 8.65e-02  &--       & 1.23e-01  &--        & 3.76e-03   &--          & 5.36e-03    &--          \\
					& 40$\times$40         & 1.29e-02  & 2.75    & 1.84e-02  & 2.75     & 4.76e-04   & 2.98       & 6.77e-04    & 2.98       \\
					& 80$\times$80         & 1.95e-03  & 2.73    & 2.76e-03  & 2.73     & 5.97e-05   & 3.00       & 8.48e-05    & 3.00       \\
					& 160$\times$160       & 2.63e-04  & 2.89    & 3.73e-04  & 2.89     & 7.47e-06   & 3.00       & 1.06e-05    & 3.00       \\
					& 320$\times$320       & 3.40e-05  & 2.95    & 4.81e-05  & 2.95     & 9.34e-07   & 3.00       & 1.33e-06    & 3.00       \\
					& 640$\times$640       & 4.31e-06  & 2.98    & 6.10e-06  & 2.98     & 1.17e-07   & 3.00       & 1.66e-07    & 3.00       \\
					& 1280$\times$1280     & 5.42e-07  & 2.99    & 7.68e-07  & 2.99     & 1.46e-08   & 3.00       & 2.07e-08    & 3.00      \\
					\hline
					\multirow{5}{*}{2} 	& 20$\times$20         & 3.73e-03   &--          & 5.37e-03    &--           & 3.84e-06   &--          & 5.53e-06    &--           \\
					& 40$\times$40         & 1.45e-04   & 4.69       & 2.06e-04    & 4.70         & 1.20e-07   & 5.00       & 1.73e-07    & 5.00         \\
					& 80$\times$80         & 7.19e-06   & 4.33       & 1.02e-05    & 4.34         & 3.75e-09   & 5.00       & 5.38e-09    & 5.00         \\
					& 160$\times$160       & 4.07e-07   & 4.14       & 5.77e-07    & 4.15         & 1.17e-10   & 5.00       & 1.68e-10    & 5.00         \\
					& 320$\times$320       & 2.43e-08   & 4.07       & 3.44e-08    & 4.07         & 3.52e-12   & 5.05       & 5.00e-12    & 5.07         \\
					\hline
					\multirow{5}{*}{3} & 20$\times$20         & 1.86e-04   &--          & 2.61e-04    &--           & 2.07e-09   &--          & 2.99e-09    &--           \\
					& 40$\times$40         & 1.64e-06   & 6.83       & 2.35e-06    & 6.80         & 1.64e-11   & 6.98       & 2.39e-11    & 6.96         \\
					& 80$\times$80         & 5.97e-08   & 4.78       & 8.48e-08    & 4.79         & 1.18e-13   & 7.11       & 1.77e-13    & 7.08         \\
					& 160$\times$160       & 1.95e-09   & 4.94       & 2.76e-09    & 4.94         &-   &--     &--    &--        \\
					& 320$\times$320       & 6.19e-11   & 4.98       & 8.76e-11    & 4.98         &--   &-  &--    &--        \\
					\hline
				\end{tabular}
			}
		\end{table}

	\subsection{Discontinuity-Capturing Tests: RK-aligned vs.~Original OEDG}
	\begin{exmp}\label{exp:1Dlinear}
This example confirms that RK-alignment preserves the non-oscillatory property of the original OEDG method. We consider the linear advection equation $u_t + u_x = 0$ with periodic boundary conditions and initial data $u_0(x) = \sin(2\pi x)$ for $x \in [0.3, 0.8]$ and $\cos(2\pi x)$ otherwise. The computation uses $N = 256$ uniform cells. As shown in \Cref{Fig:1DlinearP2} ($\mathbb{P}^2$ case), the RK-aligned solution is visually indistinguishable from the original OEDG result, confirming that the alignment procedure does not degrade solution quality.
	\end{exmp}
	\begin{figure}[!htbp]
		\centering
		\begin{subfigure}{0.48\linewidth}
			\includegraphics[width=\linewidth]{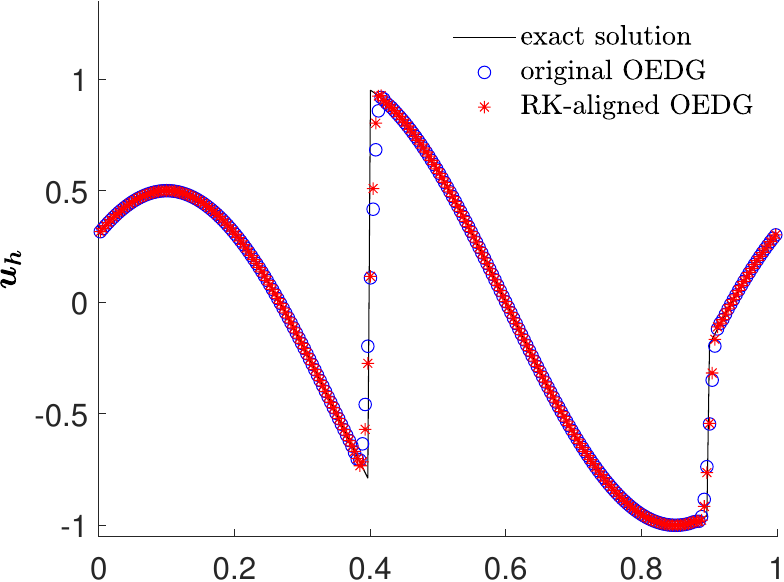}
			\caption{Results of $\mathbb{P}^2$-based original and RK-aligned OE-type DG at $t=1.1$.}
			\label{Fig:1DlinearP2}
		\end{subfigure}\hfill
		\begin{subfigure}{0.48\linewidth}
			\includegraphics[width=\linewidth]{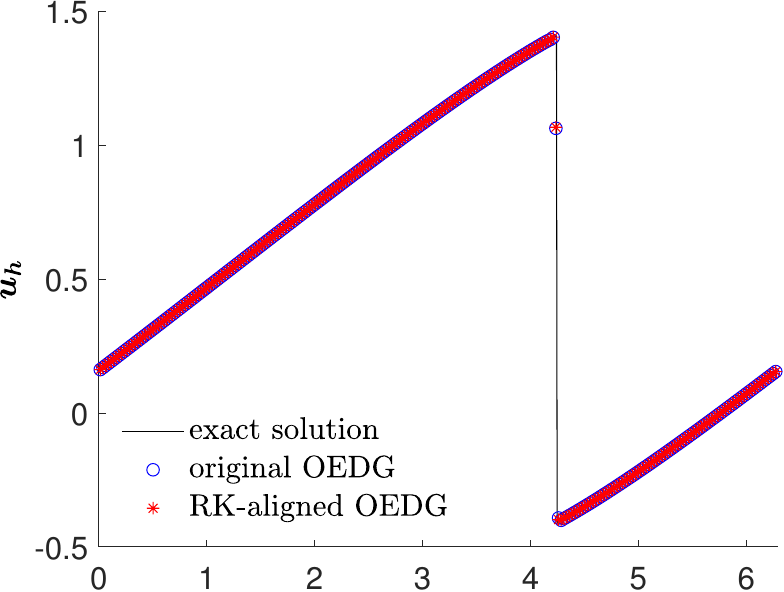}
			\caption{Comparison of $\mathbb{P}^3$-based original and RK-aligned OE-type DG at $t=2.2$.}
			\label{Fig:1DburgersP3}
		\end{subfigure}
		\caption{Results of OEDG solutions for \Cref{exp:1Dlinear} (left) and \Cref{exp:1Dburgers} (right).}
		\label{Fig:1D-combined}
	\end{figure}
	%
	%
	\begin{exmp}\label{exp:1Dburgers}
		We next consider the inviscid Burgers' equation 
		$
		u_t + \left(\tfrac{u^2}{2}\right)_x = 0
		$ 
		on the domain $\Omega = [0,2\pi]$ with periodic boundary conditions. The initial condition $u_0(x) = \sin(x) + 0.5$ is smooth but develops a discontinuity in finite time. \Cref{Fig:1DburgersP3} shows the numerical solutions at $t = 2.2$ obtained with the $\mathbb{P}^3$ RK-aligned and original OEDG methods using $N = 256$ uniform cells. Again, the RK-aligned OE-type DG solution is comparable to the original OEDG solution,  verifying that the alignment procedure does not degrade the resolution.
	\end{exmp}
	
	%


	\begin{exmp}[Woodward--Colella blast wave problem]
		This example simulates the interaction of two blast waves in the domain $[0,1]$ with reflective boundary conditions for the 1D Euler equation. The initial conditions are
		\[
		(\rho_0, v_0, p_0) =
		\begin{cases}
			(1, 0, 10^3),   & 0 < x < 0.1, \\
			(1, 0, 10^{-2}),& 0.1 < x < 0.9, \\
			(1, 0, 10^2),   & 0.9 < x < 1.
		\end{cases}
		\]
		\Cref{Fig:1DEulerP2} shows the numerical results of densities and pressures at $t = 0.038$ obtained with the third-order RK-aligned and original OEDG methods on a uniform mesh of 640 cells. The DG solution polynomials are plotted, while the reference solution is computed using the Lax--Friedrichs scheme with 300{,}000 uniform cells. The results demonstrate that the proposed RK-aligned OEDG method is as effective as the original OEDG method in suppressing oscillations and in accurately resolving the complex wave structure.
	\end{exmp}

	\begin{figure}[!htbp]
		\centering
		
		\begin{subfigure}{0.48\linewidth}
			\includegraphics[width=1\linewidth]{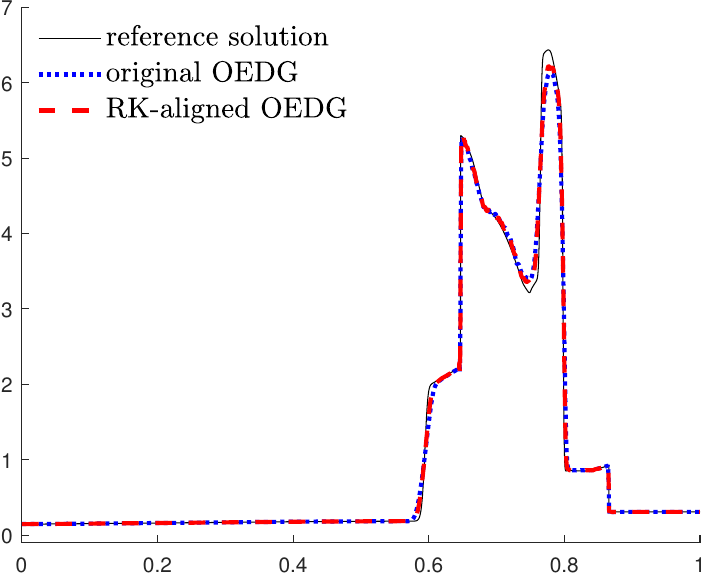}
			\label{Density}
		\end{subfigure}
		\hfill 
		\begin{subfigure}{0.48\linewidth}
			\includegraphics[width=1\linewidth]{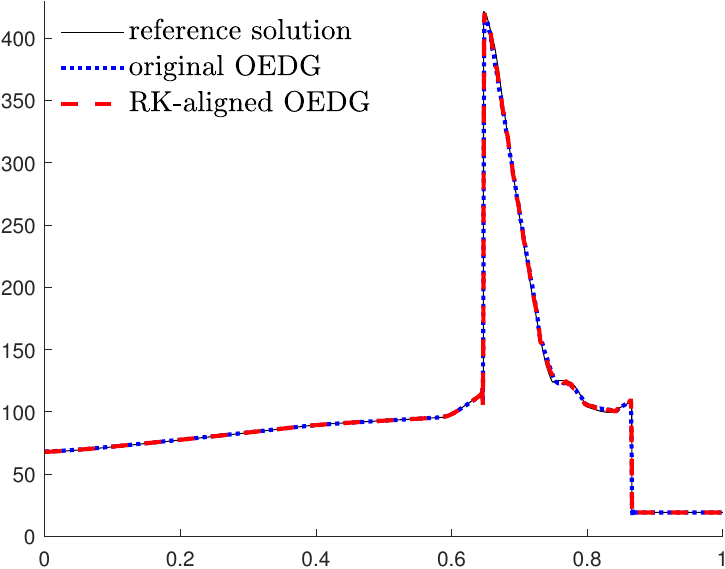}
			\label{Pressure}
		\end{subfigure}
		\caption{Density (left) and pressure (right) at $t = 0.038$ obtained with $\mathbb{P}^2$-based original and RK-aligned OEDG methods. The DG solution polynomials are shown. }
		\label{Fig:1DEulerP2}
	\end{figure}
	
	\begin{exmp}
		We solve a 2D Riemann problem for the compressible Euler equations on the domain $[0,1]^2$ with outflow boundary conditions using $320 \times 320$ uniform cells. The $\mathbb{P}^2$ RK-aligned and original OEDG methods are employed. The initial data are
		\[
		(\rho_0, \mathbf{v}_0, p_0) =
		\begin{cases}
			(0.8,\,0,\,0,\,1),      & x<0.5,\, y<0.5, \\
			(1,\,0.7276,\,0,\,1),   & x<0.5,\, y>0.5, \\
			(1,\,0,\,0.7276,\,1),   & x>0.5,\, y<0.5, \\
			(0.5313,\,0,\,0,\,0.4), & x>0.5,\, y>0.5,
		\end{cases}
		\]
		yielding two stationary contact discontinuities and two shocks. As shown in \Cref{Fig:2DeulerP2}, the original and RK-aligned OEDG results at $t=0.25$ are virtually identical and oscillation-free.
	\end{exmp}
	
	\begin{figure}[!htbp]
		\centering
		
		\begin{subfigure}{0.48\linewidth}
			\includegraphics[width=1\linewidth]{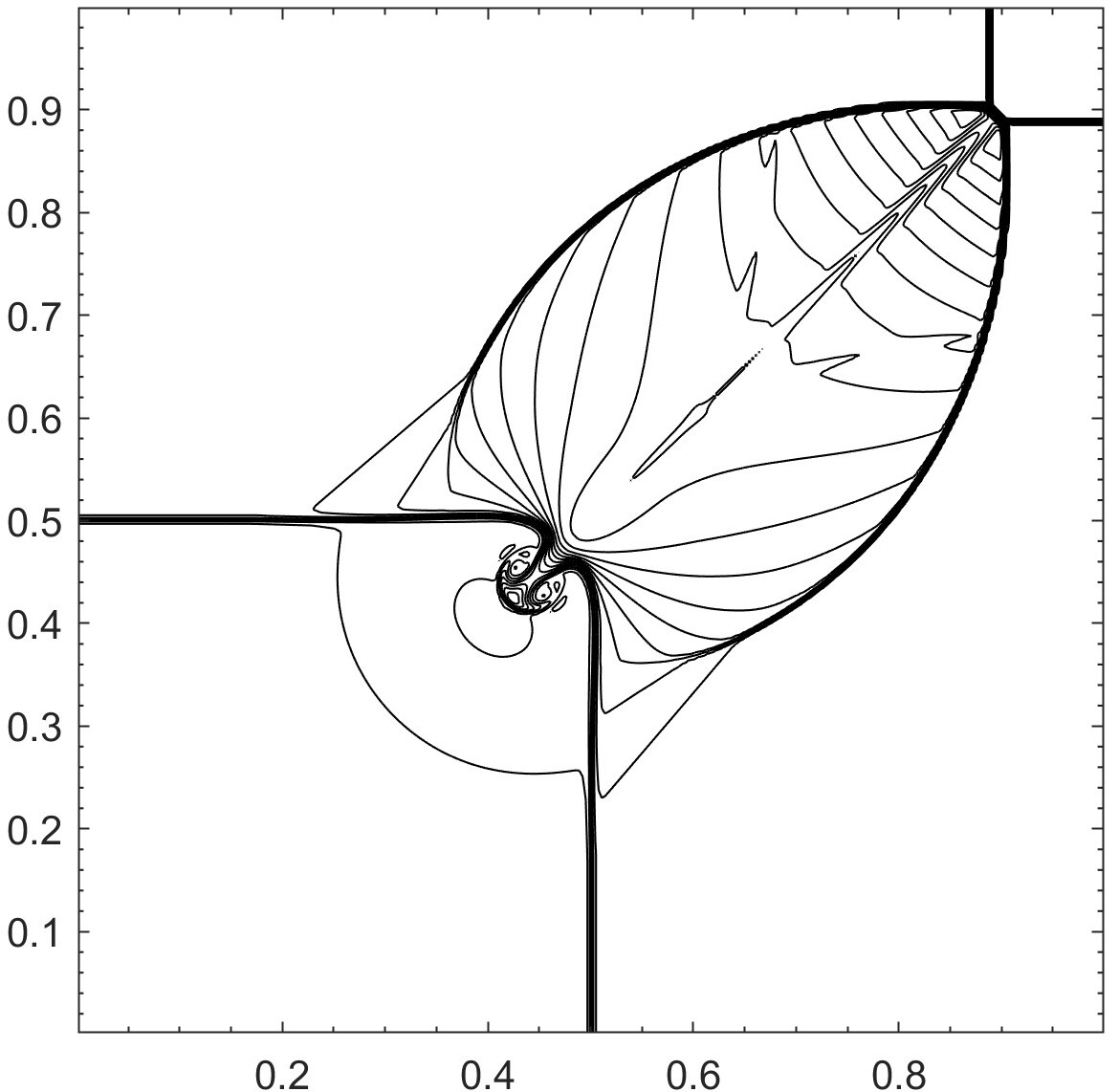}
		\end{subfigure}
		\hfill 
		\begin{subfigure}{0.48\linewidth}
			\includegraphics[width=1\linewidth]{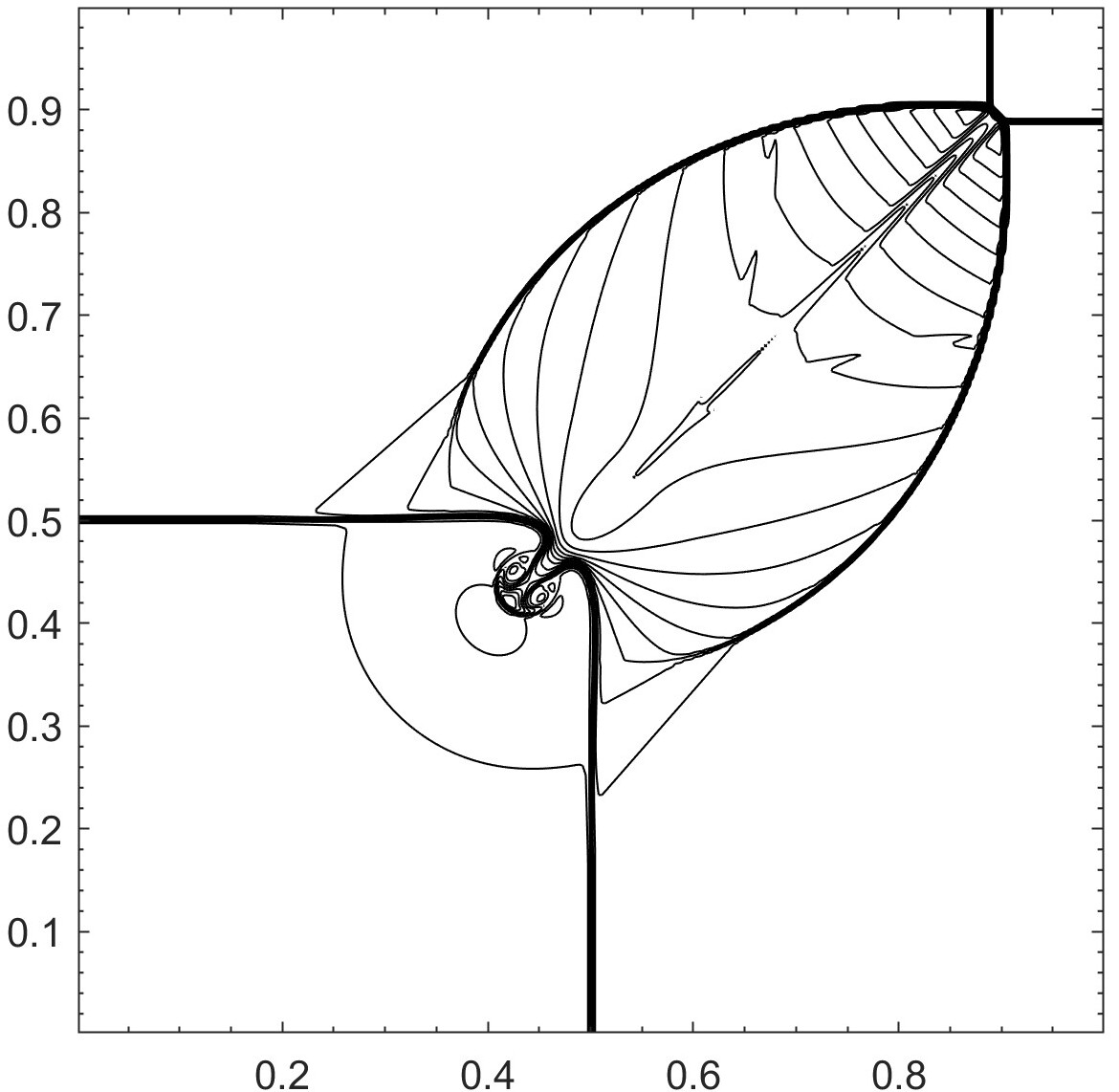}
		\end{subfigure}
		\caption{Density for $\mathbb{P}^2$-based original (left) and RK-aligned (right) OEDG solutions.}
		\label{Fig:2DeulerP2}
	\end{figure}

	In summary, the experiments confirm (i) the predicted $(k+2)$ superconvergence for smooth solutions (cell averages and outflow-edge averages), and (ii) that RK alignment preserves the shock-capturing robustness of the original OEDG stabilization. This demonstrates that the structural modification that enables our rigorous superconvergence proof comes at virtually no cost to the scheme's excellent practical robustness and shock-capturing capabilities. 
	We note that all tests respected the standard CFL condition, and we did not observe any stability issues. 
	The RK-aligned OEDG method thus represents a clear theoretical and practical advancement, providing the first rigorous superconvergence guarantee for a highly effective, parameter-free, oscillation-eliminating DG framework.

\section{Concluding remarks}\label{sec:conclusion}
In this work, we have established a rigorous superconvergence theory for oscillation-eliminating discontinuous Galerkin (OEDG) methods by introducing a structural synchronization between the nonlinear dissipation step and the Runge--Kutta stages. We proved that the proposed RK-aligned OE-type DG scheme achieves $(k{+}2)$-th order superconvergence to a tailored projection of the exact solution for linear hyperbolic conservation laws in both one and two dimensions. To the best of our knowledge, this constitutes \emph{the first rigorous superconvergence result for a fully discrete, nonlinear DG scheme capable of robust shock-capturing}. This theoretical milestone bridges the gap identified in our prior work \cite{O_Peng_2024}, elevating the analytical understanding of OEDG from optimal $(k{+}1)$-th order convergence to the higher-order behavior observed in practice.

Crucially, the theoretical modification required for this analysis (RK alignment) does not compromise the practical utility of the method. The scheme retains the desirable properties of the original OEDG framework, including local conservation, scale invariance, and parameter-free oscillation suppression. Numerical experiments confirm that the RK-aligned variant matches the shock-capturing robustness of the original method for discontinuous problems while delivering the predicted superconvergence gains for smooth solutions.

The analytical framework developed herein opens several promising avenues for future research. The most immediate extension is to nonlinear conservation laws; the machinery of stage-aligned correction functions and structure-preserving projections provides a solid foundation for tackling the complexities of nonlinear fluxes. A more challenging but equally important direction is the extension to unstructured meshes (e.g., triangular elements). This remains an open problem, as the construction of multi-dimensional correction functions for $P^k$ elements on general geometries requires non-trivial innovations beyond the tensor-product structures exploited here.

Overall, this work strengthens the mathematical foundations of nonlinear high-order schemes by reconciling rigorous error analysis with robust shock-capturing. We hope this framework paves the way for the design of future structure-preserving, non-oscillatory methods with provable high-order properties.

\begin{changemargin}{-0.2cm}{-0.2cm}  %
		
		\renewcommand\baselinestretch{0.86}
		\bibliography{refs}
		\bibliographystyle{siamplain}
		
	\end{changemargin}

\end{document}